\newtheorem{theorem}{Theorem}[section]
\newtheorem{lemma}[theorem]{Lemma}
\newtheorem{proposition}[theorem]{Proposition}
\newtheorem{conjecture}[theorem]{Conjecture}
\newtheorem{corollary}[theorem]{Corollary}
\newtheorem*{claim}{Claim}
\newtheorem{observation}[theorem]{Observation}
\newtheorem{definition}[theorem]{Definition}
\newtheorem{fact}[theorem]{Fact}
\theoremstyle{remark}
\newtheorem{problem}[theorem]{Problem}
\newcommand{\Int}{\mathbb{Z}}
\newcommand{\Nat}{\mathbb{N}}
\newcommand{\U}{\mathcal{U}}
\newcommand{\G}{\mathcal{G}}
\newcommand{\E}{\mathcal{E}}
\newcommand{\N}{{\mathbb N}}
\definecolor{green}{RGB}{0,127,0}
\definecolor{red}{RGB}{105,89,205}
\DeclareMathOperator{\aut}{Aut}
\DeclareMathOperator{\id}{id}
\DeclareMathOperator{\supp}{supp}
\begin{document}
\title{On Dual surjunctivity and applications}
\author[M. Doucha]{Michal Doucha}
\address{Institute of Mathematics\\
Czech Academy of Sciences\\
\v Zitn\'a 25\\
115 67 Praha 1\\
Czech Republic}
\email{doucha@math.cas.cz}
\author[J. Gismatullin]{Jakub Gismatullin}
\address{Instytut Matematyczny Uniwersytetu Wroc{\l}awskiego, pl. Grunwaldzki 2/4, 50-384 Wroc{\l}aw, Poland \& Instytut Matematyczny Polskiej Akademii Nauk, ul. {\'S}niadeckich 8, 00-656 Warszawa, Poland}
\email{jakub.gismatullin@uwr.edu.pl}

\keywords{Gottschalk's conjecture, (dual) surjunctive groups, sofic groups, Kaplansky's direct finiteness, cellular automata, expansive algebraic actions}
\subjclass[2010]{37B15, 20E25, 20C07, 37C29}
\thanks{M. Doucha was supported by the GA\v{C}R project 19-05271Y and RVO: 67985840. J. Gismatullin is supported by the National Science Centre, Poland NCN grants no.  2014/13/D/ST1/03491 and 2017/27/B/ST1/01467.}
\begin{abstract}
We explore the dual version of Gottschalk's conjecture recently introduced by Capobianco, Kari, and Taati, and the notion of dual surjunctivity in general. We show that dual surjunctive groups satisfy Kaplansky's direct finiteness conjecture for all fields of positive characteristic. By quantifying the notions of injectivity and post-surjectivity for cellular automata, we show that the image of the full topological shift under an injective cellular automaton is a subshift of finite type in a quantitative way. Moreover we show that dual surjunctive groups are closed under ultraproducts, under elementary equivalence, and under certain semidirect products (using the ideas of Arzhantseva and Gal for the latter); they form a closed subset in the space of marked groups, fully residually dual surjunctive groups are dual surjunctive, etc. We also consider dual surjunctive systems for more general dynamical systems, namely for certain expansive algebraic actions, employing results of Chung and Li.
\end{abstract}

\maketitle

\tableofcontents

\section*{Introduction}
In the beginning of the 1970s, W. Gottschalk introduced the following notion. Let $G$ be a group and $A$ a finite set, and let us consider the topological Bernoulli shift $G\curvearrowright A^G$. If any injective, continuous, and $G$-equivariant map $T\colon A^G\rightarrow A^G$ is also surjective, then $G$ is called \emph{surjunctive}. Gottschalk asked in \cite{gott} whether every group is surjunctive. The question reached its prominence after Gromov proved in \cite{grom} that groups that were later going to be called sofic (see \cite{weiss}) are surjunctive. Sofic groups, originally introduced just because of Gottschalk's question, now live a life on their own and are one of the most important classes of groups in geometric group theory, topological and measurable dynamics, graph theory, and beyond. Peculiarly, as of now, there is still not known any non-sofic group, although the existence of such groups is rather generally expected.

A natural idea is to consider the reverse of the Gottschalk question. The perhaps most direct attempt, that is, asking whether every surjective, continuous, and $G$-equivariant map $T\colon A^G\rightarrow A^G$ is injective, is not the right choice. Indeed, there are counterexamples even for $G=\Int$ (we refer to \cite{csc_book} for examples of such kind). It is however instructive to recall at this point the notion of Garden of Eden. Although surjectivity of the map $T\colon A^G\rightarrow A^G$ does not necessarily imply injectivity, it does imply, in some cases, a weaker notion called pre-injectivity (we refer further to the paper for a definition, or to the monograph \cite{csc_book}). The theorems of Moore and Myhill (\cite{Moore}, \cite{Myhill}) established the equivalence of surjectivity and pre-injectivity for $G=\Int^d$. Later the same equivalence was obtained for all finitely generated groups of subexponential growth in \cite{MaMi}, and finally for all amenable groups in \cite{CSMaSc}. This line of research culminated rather recently when Bartholdi in the combination of the two papers \cite{Ba10} and \cite{Ba19} showed that the Garden of Eden equivalence characterizes the class of amenable groups. We remark that the Garden of Eden equivalence has been considered and proved for dynamical systems much more general than topological Bernoulli shifts. We refer to \cite{Li2019} and references therein for more information.

It was also very recently when Capobianco, Kari, and Taati found a proper reverse, or dual, of Gottschalk's question. In \cite{cap}, they introduce the notion of post-surjectivity, which is strictly stronger than surjectivity, and ask for which groups, rightfully called dual surjunctive, post-surjectivity implies pre-injectivity. As Gromov did for surjunctive groups, they show that all sofic groups are dual surjunctive. Since, as they show for topological Bernoulli shifts, post-surjectivity together with pre-injectivity actually implies injectivity, one is led to a strong version of the Garden of Eden theorem which says that a continuous $G$-equivariant map $T\colon A^G\rightarrow A^G$ is injective if and only if it is post-surjective. All sofic groups therefore satisfy this strong version of Garden of Eden.\medskip

The aim of this note is to explore the notions of post-surjectivity and dual surjunctivity further. We simplify some arguments from \cite{cap} concerning post-surjectivity and pre-injectivity, and we investigate these notions in a quantitative way. We also introduce and investigate a notion of post-surjectivity for more general shifts and even more general expansive dynamical systems. Below is a selection of some of our results.\medskip

\noindent{\bf Results.} 
\begin{enumerate}
    \item Dual surjunctive groups satisfy Kaplansky's direct finiteness conjecture for all fields of positive characteristic (see Theorem~\ref{thm:kap}). We also use the opportunity to consider a metric version of Kaplansky's conjecture (see Theorem~\ref{thm:kapp}).
    \item If $T\colon A^G\rightarrow A^G$ is an injective cellular automaton, then $T[A^G]$ is a subshift of finite type with memory set of the forbidden patterns precisely depending on the injectivity of $T$ (see Theorem~\ref{thm:injectivity-finitesubshift}).
    \item Dual surjunctive groups are closed under taking ultraproducts and under elementary equivalence, they form a closed subset in the space of marked groups, fully residually dual surjunctive groups are dual surjunctive (see Theorems~\ref{thm:dualsurjultraprod} and~\ref{thm:closedinmarkedgrps}, and Corollaries~\ref{cor:fullyresiduallyDS} and~\ref{cor:elementaryeqDS}).
    \item Algebraic expansive actions of any countable polycyclic-by-finite group (and under some additional conditions, of any amenable group) on compact metrizable abelian groups with completely positive entropy (with respect to the Haar measure on the compact group) are dual surjunctive (see Theorem~\ref{thm:expansivealgactionDS}).\bigskip
\end{enumerate}

\section{Post-surjectivity and pre-injectivity}
Throughout the paper, $G$ is a group and $A$ is a finite set having at least two elements. We topologize $A^G$ with the product topology, where $A$ is equipped with the discrete topology. When $G$ is countably infinite, which will be the most interesting case, $A^G$ is then obviously homeomorphic to the Cantor space. $G$ acts (by homeomorphisms) on $A^G = \{f\colon G\to A\}$ by \[g\cdot f(x) =  f(g^{-1}x), \text{ for }g,x\in G,\ f\in A^G.\] The corresponding dynamical system is called the \emph{topological Bernoulli shift}, or just \emph{topological (full)-shift}.

We need few more definitions from the dynamics on topological shifts. We refer the reader to \cite{csc_book} for a detailed treatment.
\begin{definition}
Let $G$ be a group and $A$ a finite set. Any element $x\in A^G$ is called a configuration. Any map $p\colon D\rightarrow A$, where $D\subseteq G$ is (usually finite, but not always) a subset, is called a \emph{pattern}. A pattern is called \emph{finite} if its domain is finite.
\end{definition}
\begin{definition}
Let $G$ be a group and $A$ a finite set. By a \emph{subshift}, we mean any closed subset $X\subseteq A^G$ that is also closed under the shift by the elements of $G$.

If $P\subseteq G$ is a subset, by $X_P$ we denote the set of patterns whose domain is $P$ and which are restrictions of configurations from $X$. That is, $X_P:=\{x\upharpoonright P\colon x\in X\}$.
\end{definition}
The following two types of subshifts will be the most interesting for us.
\begin{definition}
Let $X\subseteq A^G$ be a subshift. We say that
\begin{itemize}
    \item $X$ is \emph{of finite type} if there exists a finite set $\{p_1,\ldots,p_n\}$ of finite patterns such that for $x\in A^G$ we have $x\in X$ if and only if for no $g\in G$ and $i\leq n$ we have $g\cdot x\upharpoonright \mathrm{dom}(p_i)=p_i$.
    \item $X$ is \emph{strongly irreducible} if there exists a finite set $D\subseteq G$ such that for all finite patterns $p\colon P\rightarrow A$ and $p'\colon P'\rightarrow A$, with $p\in X_P$, resp. $p'\in X_{P'}$ and $P\cdot D\cap P'=\emptyset$ there exists $x\in X$ such that $x\upharpoonright P=p$ and $x\upharpoonright P'=p'$.
    
\end{itemize}
\end{definition}

By $T\colon A^G\rightarrow A^G$ we always mean a continuous $G$-equivariant map, that is \[T(g\cdot f) = g\cdot T(f).\] $T$ is called a \emph{cellular automaton} (further abbreviated \emph{CA}). It is well known that every such $T$ is induced by a map $\tau\colon A^F\rightarrow A$, where $F\subseteq G$ is a finite subset, called the \emph{memory set} for $T$,  such that for all $f\in A^G$, $x\in G$ the following holds true:
\[T(f)(x) = \tau(f\upharpoonright x\cdot F), \tag{$\ast$}\] where $f\upharpoonright x\cdot F$ is the pattern obtained by restricting $f$ to $x\cdot F:=\{xg\colon g\in F\}$.

Let us define an equivalence relation $\sim$ of \emph{almost equality} on $A^G$ in the following way, for $c,d\in A^G$ we write \[c\sim d \text{ if and only if }\{g\in G\colon c(g)\neq d(g)\}\text{ is finite.}\]
We record the following fact whose straightforward proof is left for the reader.
\begin{fact}
Let $X\subseteq A^G$ be a strongly irreducible subshift. Then for every $x\in X$, the equivalence class $[x]_\sim\cap X$ is dense in $X$.
\end{fact}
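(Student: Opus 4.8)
The plan is to reduce density to the following patching statement: for every $x\in X$, every $y\in X$, and every finite $P\subseteq G$, there is a configuration $z\in X$ with $z\upharpoonright P=y\upharpoonright P$ and $z\sim x$. This suffices, since the cylinders $\{w\in X\colon w\upharpoonright P=y\upharpoonright P\}$, with $y\in X$ and $P\subseteq G$ finite, form a basis of the topology of $X$, and meeting each such cylinder is exactly what density of $[x]_\sim\cap X$ requires.

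To build such a $z$, I would fix the finite set $D\subseteq G$ witnessing strong irreducibility of $X$ (we may assume $e\in D$, since enlarging $D$ preserves the defining property), and use the finite ``buffer'' $P\cdot D$. For each finite $Q\subseteq G$ with $Q\supseteq P\cdot D$, apply strong irreducibility to the two finite patterns $p=y\upharpoonright P\in X_P$ and $p'=x\upharpoonright(Q\setminus P\cdot D)\in X_{Q\setminus P\cdot D}$; their domains satisfy $P\cdot D\cap(Q\setminus P\cdot D)=\emptyset$, so we obtain $z_Q\in X$ with $z_Q\upharpoonright P=y\upharpoonright P$ and $z_Q\upharpoonright(Q\setminus P\cdot D)=x\upharpoonright(Q\setminus P\cdot D)$. (If $Q\setminus P\cdot D=\emptyset$ the pattern $p'$ is empty and the appeal to strong irreducibility just records that $y\upharpoonright P\in X_P$.)

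Then I would pass to a limit over $Q$. The net $(z_Q)_Q$, indexed by the finite supersets of $P\cdot D$ directed by inclusion, lies in the compact space $X$ (closed in $A^G$), hence has a cluster point $z\in X$; when $G$ is countable one may instead exhaust $G$ by finite sets $Q_n\uparrow G$ and extract a convergent subsequence of $(z_{Q_n})$. Since every $z_Q$ agrees with $y$ on the finite set $P$, so does the cluster point $z$. And for any $g\in G\setminus(P\cdot D)$, every $z_Q$ with $g\in Q$ satisfies $z_Q(g)=x(g)$, and such $Q$ are cofinal, so $z(g)=x(g)$; therefore $\{g\in G\colon z(g)\neq x(g)\}\subseteq P\cdot D$ is finite, i.e.\ $z\sim x$. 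This produces the desired $z$ and proves the fact.

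The only genuine point, and the step I would flag as the main obstacle, is this last limiting argument: strong irreducibility as stated controls only finitely many coordinates at once, so one cannot in a single application force $z$ to equal $x$ on the whole infinite complement of $P\cdot D$; it is precisely compactness of $X$ that upgrades the finite-pattern statement to an honest configuration $z\in X$ with $z\sim x$.
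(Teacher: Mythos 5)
Your proof is correct, and since the paper explicitly leaves this ``straightforward proof'' to the reader, your argument (patch $y\upharpoonright P$ against $x$ on ever larger finite sets via strong irreducibility, then extract a cluster point by compactness of $X$ to get $z\sim x$ with $\Delta(z,x)\subseteq P\cdot D$) is exactly the intended one. Your care in assuming $e\in D$, so that $P$ and $Q\setminus P\cdot D$ are genuinely disjoint, and in noting that compactness is what upgrades the finite-pattern statement to a configuration, are the right points to flag.
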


In \cite{cap}, Capobianco, Kari, and Taati introduced the following stronger version of surjectivity. We also recall below the by-now standard notion of pre-injectivity, a weaker version of injectivity.
\begin{definition}
\begin{enumerate}
\item \cite[Definition 2]{cap} $T$ is called  \emph{post-surjective} if whenever $T(g)\sim f'$, for any $g,f'\in A^G$, then there exists $g'\sim g$ such that $T(g')=f'$.

\item $T$ is called  \emph{pre-injective} if  whenever $f\sim f'$ and $T(f)=T(f')$, then $f=f'$, for all $f,f'\in A^G$.
\end{enumerate}
\end{definition}

\bigskip

For $f_1,f_2\in A^G$, we denote \[\Delta(f_1,f_2) = \{g\in G : f_1(g)\neq f_2(g)\}.\]

We shall also need a stronger version of post-surjectivity, which is what is actually useful in applications. It turns out that for full shifts, the two notions of post-surjectivity are equivalent.
\begin{definition}
Let $X,Y\subseteq A^G$ be subshifts. A CA $T\colon X\rightarrow Y$ is \emph{strongly post-surjective} if there exists a finite set $M\subseteq G$ such that for every $x\in X$ and $T(x)\sim z\in Y$ there exists $y\in X$ such that $y\sim x$, $T(y)=z$ and $\Delta(x,y)\subseteq \Delta(T(x),z)\cdot M$.

The set $M$ will be called a \emph{post-surjectivity set} for $T$.
\end{definition}

\begin{observation}\label{obs:postsurjectivity}
\begin{enumerate}
\item A CA $T\colon A^G\rightarrow A^G$ is post-surjective if and only if it is strongly post-surjective.
\item Let $X,Y\subseteq A^G$ be subshifts and let $T\colon X\rightarrow Y$ be a strongly post-surjective and pre-injective CA with a finite post-surjectivity set $M$. Then for every $c, d\in X$, $c\sim d$ the following holds true \[\Delta(c,d)\subseteq\Delta(T(c),T(d))\cdot M.\]
\end{enumerate}
\end{observation}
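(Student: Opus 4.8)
The plan is to handle the two parts separately; part~(2) will be a short deduction, so the bulk of the work is part~(1). There the implication ``strongly post-surjective $\Rightarrow$ post-surjective'' is immediate (discard the constraint on $\Delta$), so assume $T\colon A^G\to A^G$ is post-surjective, fix a memory set $F\subseteq G$, and let $e$ denote the identity of $G$. I would first reduce to a \emph{single-defect} statement: it suffices to produce a finite $M_0\subseteq G$ such that for every $x\in A^G$ and every $a\in A$ with $a\neq T(x)(e)$, the configuration obtained from $T(x)$ by setting the value at $e$ to $a$ equals $T(y)$ for some $y\sim x$ with $\Delta(x,y)\subseteq M_0$. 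By $G$-equivariance the same then holds with $e$ and $M_0$ replaced by $h$ and $hM_0$, for any $h\in G$; so, given $x$ and $z\sim T(x)$ with $\Delta(T(x),z)=\{h_1,\dots,h_k\}$, one modifies $T(x)$ into $z$ one coordinate at a time, producing $x=x_0\sim x_1\sim\cdots\sim x_k$ with $T(x_j)$ the partial modification along $h_1,\dots,h_j$ and $\Delta(x_{j-1},x_j)\subseteq h_jM_0$ (the hypothesis being met at each step since the $h_j$ are distinct and lie in $\Delta(T(x),z)$). Then $y:=x_k$ satisfies $y\sim x$, $T(y)=z$ and $\Delta(x,y)\subseteq\bigcup_j\Delta(x_{j-1},x_j)\subseteq\Delta(T(x),z)\cdot M_0$, so $M_0$ is a post-surjectivity set.

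To produce $M_0$ I would argue by contradiction via compactness. If no finite $M_0$ works, then for each finite $M\subseteq G$ there is a pair $(x_M,a_M)$ with $a_M\neq T(x_M)(e)$ such that no $y\sim x_M$ with $T(y)$ equal to the resulting modification $z_M$ of $T(x_M)$ has $\Delta(x_M,y)\subseteq M$; by post-surjectivity at least one such $y$ still exists. Fix an ultrafilter $\mathcal{U}$ on the finite subsets of $G$ containing every tail $\{M:M\supseteq M'\}$, and set $x_\infty:=\lim_{\mathcal{U}}x_M$ and $a:=\lim_{\mathcal{U}}a_M$. For $\mathcal{U}$-many $M$ we have $a_M=a$ and, by continuity of $T$, $T(x_M)(e)=T(x_\infty)(e)$; since always $a_M\neq T(x_M)(e)$, this gives $a\neq T(x_\infty)(e)$. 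So post-surjectivity applied to $x_\infty$ and the modification $z_\infty$ of $T(x_\infty)$ (with $z_\infty(e)=a$) yields $y_\infty\sim x_\infty$ with $T(y_\infty)=z_\infty$; put $S:=\Delta(x_\infty,y_\infty)$, a finite set, and note $F\cap S\neq\emptyset$ (otherwise $x_\infty$ and $y_\infty$ agree on $F=eF$, forcing $a=T(y_\infty)(e)=T(x_\infty)(e)$).

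The crux is to transfer $y_\infty$ back to the $x_M$. Let $y_M$ agree with $y_\infty$ on $S$ and with $x_M$ on $G\setminus S$, so $\Delta(x_M,y_M)\subseteq S$. For $g$ with $gF\cap S=\emptyset$ one has $y_M\upharpoonright gF=x_M\upharpoonright gF$, so $T(y_M)(g)=T(x_M)(g)=z_M(g)$ (and $g=e$ cannot occur, as $F\cap S\neq\emptyset$). For the finitely many $g$ with $gF\cap S\neq\emptyset$, once $x_M$ agrees with $x_\infty$ on the finite set $(SF^{-1})F\setminus S$ --- which holds for $\mathcal{U}$-many $M$ --- we get $y_M\upharpoonright gF=y_\infty\upharpoonright gF$, hence $T(y_M)(g)=z_\infty(g)$; and $z_\infty(g)=z_M(g)$ for $\mathcal{U}$-many $M$ (both equal $a$ at $g=e$; for $g\neq e$ this is $T(x_\infty)(g)=T(x_M)(g)$, valid for $\mathcal{U}$-many $M$ by continuity since only finitely many $g$ are involved). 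Intersecting these finitely many $\mathcal{U}$-large sets with the tail $\{M:M\supseteq S\}\in\mathcal{U}$, I obtain an $M\supseteq S$ with $T(y_M)=z_M$ and $\Delta(x_M,y_M)\subseteq S\subseteq M$, contradicting the choice of $(x_M,a_M)$; hence $M_0$ exists (one may take $M_0=S$), which completes part~(1). I expect this transfer step to be the main obstacle: a naive ``set of counterexamples is closed'' compactness argument does not apply, because $x_\infty$ is typically not itself a counterexample, so one must instead pull the limiting witness $y_\infty$ back to the approximating configurations while controlling the exact finite window on which they must agree.

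For part~(2), let $c\sim d$ in $X$ and let $F$ be a memory set for $T$. First $T(c)\sim T(d)$: if $T(c)(g)\neq T(d)(g)$ then $c\upharpoonright gF\neq d\upharpoonright gF$, so $g\in\Delta(c,d)\cdot F^{-1}$, a finite set. Now apply strong post-surjectivity to $x:=c$ and $z:=T(d)\in Y$ (legitimate since $T(c)\sim T(d)$): there is $y\in X$ with $y\sim c$, $T(y)=T(d)$ and $\Delta(c,y)\subseteq\Delta(T(c),T(d))\cdot M$. Since $y\sim c\sim d$ and $T(y)=T(d)$, pre-injectivity of $T$ forces $y=d$, and therefore $\Delta(c,d)=\Delta(c,y)\subseteq\Delta(T(c),T(d))\cdot M$.
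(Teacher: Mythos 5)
Your proposal is correct, and part (2) is exactly the paper's argument: apply strong post-surjectivity to $c$ and $z=T(d)$ and use pre-injectivity to identify the resulting $y$ with $d$. For part (1) the paper does not give a proof at all but simply cites \cite[Lemma 1]{cap}; your single-defect reduction by equivariance followed by the compactness/ultrafilter transfer of the limiting witness $y_\infty$ back to the approximating configurations is a correct, self-contained reconstruction in the same spirit as that cited lemma, so no gap.
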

\begin{proof}
The non-trivial direction of $(1)$ is the content of \cite[Lemma 1]{cap}. For $(2)$, notice that by strong post-surjectivity there is some $d'\sim c$ with $T(d')=T(d)$ such that $\Delta(c,d')\subseteq\Delta(T(c),T(d))\cdot M$. By pre-injectivity, since $d'\sim d$ and $T(d)=T(d')$, we get that $d'=d$, and we are done.
\end{proof}

Our aim is now to strengthen of the main results from \cite{cap}, saying that pre-injective and (strongly) post-surjective CA on full shift is reversible (see \cite[Theorem 1]{cap}), and also to provide a simpler proof of it. First we observe that injectivity is equivalent to a kind of uniform injectivity.

\begin{proposition}\label{prop:unifinject}
Let $X\subseteq A^G$ be a closed invariant subshift. Let $T\colon X\rightarrow A^G$ be a cellular automaton. Then $T$ is injective if and only if there exists a finite subset $N\subseteq G$ such that for every $x,y\in X$, if $x(g)\neq y(g)$, for some $g\in G$, then there exists $h\in g\cdot N$ such that $T(x)(h)\neq T(y)(h)$.
\end{proposition}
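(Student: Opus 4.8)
The ``if'' direction is immediate and I would dispose of it first: given such a finite $N$, if $x\neq y$ in $X$ then $x(g)\neq y(g)$ for some $g$, and then $T(x)(h)\neq T(y)(h)$ for some $h\in g\cdot N$, so $T(x)\neq T(y)$.

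For the nontrivial direction I would argue by contradiction, exploiting that $X$, being closed in the compact space $A^G$, is itself compact. Assume that for \emph{every} finite $N\subseteq G$ there are $x_N,y_N\in X$ and $g_N\in G$ with $x_N(g_N)\neq y_N(g_N)$ yet $T(x_N)$ and $T(y_N)$ agreeing on all of $g_N\cdot N$. The first step is to normalise the location of the disagreement to the identity: replace $(x_N,y_N)$ by $(g_N^{-1}\cdot x_N,\ g_N^{-1}\cdot y_N)$, which lies in $X$ by shift-invariance; since $(g_N^{-1}\cdot x_N)(e)=x_N(g_N)$ and, by $G$-equivariance, $T(g_N^{-1}\cdot x_N)(h)=T(x_N)(g_N h)$, after this replacement we may assume $x_N(e)\neq y_N(e)$ and $T(x_N)\upharpoonright N=T(y_N)\upharpoonright N$.

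Now fix an exhaustion $N_1\subseteq N_2\subseteq\cdots$ of $G$ by finite sets with $e\in N_1$, put $x_i:=x_{N_i}$, $y_i:=y_{N_i}$, and pass to a subsequence along which $x_i\to x$ and $y_i\to y$ in $X$ (possible by compactness). Since $A$ is discrete, the values at $e$ eventually stabilise, so $x(e)\neq y(e)$, in particular $x\neq y$. On the other hand, for any fixed $h\in G$ we have $h\in N_i$ for all large $i$, hence $T(x_i)(h)=T(y_i)(h)$ for all large $i$; by continuity of $T$ this passes to the limit, giving $T(x)(h)=T(y)(h)$. As $h$ was arbitrary, $T(x)=T(y)$, contradicting injectivity. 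The only point requiring care is the bookkeeping with the $G$-action in the normalisation step; beyond that, this is the standard ``injectivity upgrades to uniform injectivity on a compact space'' compactness argument, and I do not anticipate a genuine obstacle.
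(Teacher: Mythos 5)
Your proof is correct and takes essentially the same route as the paper: negate the uniform statement, use the shift action and $G$-equivariance to normalise the disagreement to the identity, then extract convergent subsequences by compactness and use continuity of $T$ to contradict injectivity. The only minor remark is that your sequential exhaustion $N_1\subseteq N_2\subseteq\cdots$ implicitly assumes $G$ countable, exactly as the paper does; for uncountable $G$ one would index by the net of all finite subsets instead, which the paper notes as an easy generalization.
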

\begin{proof}
Suppose first that $T$ is injective and the right side of the equivalence from the statement is not satisfied. We shall suppose that $G$ is countable, however the proof can be easily generalized to the uncountable case. Let $(N_m)_{m\in\N}$ be an increasing sequence of finite subsets of $G$ whose union covers $G$. For each $m$ there are $x_m,y_m\in X$ such that $x_m(g_m)\neq y_m(g_m)$, for some $g_m\in G$, however for every $h\in g_m\cdot N_m$ we have $T(x_m)(h)=T(y_m)(h)$. We can clearly assume that $g_m=1_G$. By compactness, we can assume that $x_m\to x$ and $y_m\to y$, when $m\to\infty$. By continuity of $T$ and since $\bigcup_{m\in\N} N_m=G$, we get that $T(x_m)\to T(x)$, $T(y_m)\to T(y)$, and $T(x)=T(y)$ although $x(1_G)\neq y(1_G)$. This is a contradiction with injectivity of $T$.

Conversely, suppose that $T$ satisfies the right side of the equivalence from the statement for a finite subset $N\subseteq G$, but it is not injective. Then there exist $x\neq y\in X$ such that $T(x)=T(y)$. In particular, there is $g\in G$ such that $x(g)\neq y(g)$, however for every $h\in g\cdot N$, $T(x)(h)=T(y)(h)$, a contradiction.
\end{proof}

\begin{definition}
Let $T\colon X\rightarrow A^G$ be an injective CA. The finite set $N\subseteq G$ guaranteed by Proposition~\ref{prop:unifinject} will be called an \emph{injectivity set} for $T$.
\end{definition}
\begin{lemma}\label{lem:post-surjective-surjective}
Let $X,Y\subseteq A^G$ be subshifts and $Y$ be strongly irreducible. Let $T\colon X\rightarrow Y$ be a strongly post-surjective CA. Then $T$ is surjective.
\end{lemma}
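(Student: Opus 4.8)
The plan is to combine two soft observations with the density statement for strongly irreducible subshifts recorded above as a Fact. First, since $X$ is a subshift of the compact space $A^G$ it is itself compact, so its continuous image $T[X]$ is compact, hence closed in $A^G$; being the image under a $G$-equivariant map of a shift-invariant set, $T[X]$ is moreover shift-invariant. Thus $T[X]$ is a closed invariant subset of $Y$, and it suffices to show it is dense in $Y$.

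Second, I would \emph{not} attempt to invert $T$ at an arbitrary point of $Y$; instead, fix any $x_0\in X$ (we may clearly assume $X\neq\emptyset$) and put $z_0:=T(x_0)\in Y$. Strong post-surjectivity, applied with $x=x_0$, says precisely that for \emph{every} $z\in Y$ with $z\sim z_0$ there is some $y\in X$ with $T(y)=z$. Hence the entire almost-equality class $[z_0]_\sim\cap Y$ is contained in $T[X]$.

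Finally, since $Y$ is strongly irreducible, the Fact above, applied to the subshift $Y$ and the point $z_0\in Y$, tells us that $[z_0]_\sim\cap Y$ is dense in $Y$. As $T[X]$ is a closed set containing this dense subset, $T[X]=Y$; that is, $T$ is surjective.

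There is no real obstacle here; the only point requiring care is the logical order of the argument — one exploits density of a \emph{single} almost-equality class in a strongly irreducible subshift, produces preimages only along that dense set, and then bootstraps to all of $Y$ by closedness of the image. (If one prefers not to cite the stated Fact, its proof is the routine gluing-plus-compactness argument: given $y\in Y$ and a finite window $W$, apply strong irreducibility of $Y$ to the pattern $y\upharpoonright W$ and to ever larger finite restrictions of $z_0$ disjoint from a fixed translate $W\cdot D$, and pass to a limit point to obtain an element of $[z_0]_\sim\cap Y$ agreeing with $y$ on $W$.)
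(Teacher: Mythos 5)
Your argument is correct and is essentially the paper's own proof in a slightly repackaged form: both rest on the density of the almost-equality class of $T(x_0)$ in $Y$ (the stated Fact, via strong irreducibility), on strong post-surjectivity to produce preimages along that class, and on compactness of $X$ with continuity of $T$ — you phrase the last step as ``$T[X]$ is closed and contains a dense set,'' while the paper takes a cluster point of the preimage sequence, which is the same argument. No gap; the implicit assumption $X\neq\emptyset$ is made in the paper as well.
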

\begin{proof}
This is essentially proved in \cite[Proposition 2]{cap}. Since we work in a slightly more generality, we re-prove it for the convenience of the reader. Pick some $y\in Y$ and let $x\in X$ be arbitrary. By strong irreducibility, the equivalence classes in $\sim$ are dense in $Y$, so we can find a sequence $(y_n)_{n\in\Nat}$ such that $y_n\to y$ and $y_n\sim T(x)$, for every $n\in\Nat$. By strong post-surjectivity, there exists a sequence $(x_n)_{n\in\Nat}$ such that $x_n\sim x$ and $T(x_n)=y_n$, for every $n\in\Nat$. Let $x'\in X$ be a cluster point of this sequence. It is immediate that $T(x')=y$.
\end{proof}
\begin{theorem} \label{thm:isom}
Let $X,Y\subseteq A^G$ be subshifts and let $T$ be a strongly post-surjective and pre-injective CA. If $X$ is strongly irreducible, then $T$ is injective .

In particular, if both $X$ and $Y$ are strongly irreducible, then $T$ is an isomorphism.
\end{theorem}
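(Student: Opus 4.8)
The plan is to establish injectivity by contradiction, converting a failure of injectivity into a violation of Observation~\ref{obs:postsurjectivity}(2); strong irreducibility of $X$ enters only through the Fact recorded above, namely that the almost-equality class of any point of a strongly irreducible subshift is dense in it, which is what lets me produce a pair of configurations on which pre-injectivity can be used.

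First I would assume $T\colon X\to Y$ is not injective and fix $x\ne y$ in $X$ with $T(x)=T(y)$. Using that $X$ and $Y$ are shift-invariant and $T$ is $G$-equivariant, I would replace $x$ and $y$ by suitable translates so that $x(1_G)\ne y(1_G)$. Then I would fix the finite data governing the argument once and for all: a memory set $F$ for $T$ with $1_G\in F$ (enlarging it if necessary) and a finite post-surjectivity set $M$ for $T$, the latter available since $T$ is strongly post-surjective. Writing $M^{-1}=\{m^{-1}:m\in M\}$, the key move is to declare the finite window $E=(\{1_G\}\cup M^{-1})\cdot F$ \emph{before doing anything else}, so that, since $T$ is determined by the memory set $F$, the pattern $T(w)$ restricted to $\{1_G\}\cup M^{-1}$ depends only on $w$ restricted to $E$, for every $w\in A^G$.

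Next, invoking the density of $[x]_\sim\cap X$ in $X$, I would pick $y_0\in X$ with $y_0\sim x$ that agrees with $y$ on $E$ (possible since those configurations in $X$ which agree with $y$ on $E$ form an open neighbourhood of $y$). This choice forces two things: $y_0(1_G)=y(1_G)\ne x(1_G)$, so in particular $y_0\ne x$; and $T(y_0)$ agrees with $T(y)=T(x)$ throughout $\{1_G\}\cup M^{-1}$, so $T(x)(m^{-1})=T(y_0)(m^{-1})$ for all $m\in M$. To close the argument I would note that $y_0\sim x$ and $y_0\ne x$ give $T(y_0)\ne T(x)$ by pre-injectivity, whence Observation~\ref{obs:postsurjectivity}(2), applied to the pair $x\sim y_0$, yields $\Delta(x,y_0)\subseteq\Delta(T(x),T(y_0))\cdot M$. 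Since $1_G\in\Delta(x,y_0)$, there is $m\in M$ with $m^{-1}\in\Delta(T(x),T(y_0))$, i.e.\ $T(x)(m^{-1})\ne T(y_0)(m^{-1})$ --- directly contradicting the agreement just recorded, because $m^{-1}\in M^{-1}\subseteq E$. Hence $T$ is injective. For the final assertion, if $Y$ is also strongly irreducible then Lemma~\ref{lem:post-surjective-surjective} makes $T$ surjective, so $T$ is a continuous bijection from the compact space $X$ onto the Hausdorff space $Y$, hence a homeomorphism; its inverse is manifestly $G$-equivariant, so $T$ is an isomorphism of subshifts.

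The step I expect to be the real content is fixing the order of quantifiers correctly: the window $E$, and in particular the inclusion $M^{-1}\cdot F\subseteq E$, has to be chosen before $y_0$ is produced, so that the translate $m^{-1}$ that Observation~\ref{obs:postsurjectivity}(2) forces out always lands inside the region on which $T(y_0)$ has already been pinned to $T(x)$. Everything else --- the translation reduction, the harmless enlargement of $F$ to contain $1_G$, the bookkeeping with the memory set, and the compactness argument at the end --- is routine.
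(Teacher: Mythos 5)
Your argument is correct and rests on exactly the same mechanism as the paper's proof: strong irreducibility of $X$ is used to replace a failure-of-injectivity witness by an almost-equal pair in $X$ whose images under $T$ agree on a finite window determined by the post-surjectivity set, after which Observation~\ref{obs:postsurjectivity}(2) forces a disagreement inside that window, contradicting the arranged agreement (and the ``in particular'' part via Lemma~\ref{lem:post-surjective-surjective} plus compactness is the same). The only differences are cosmetic: the paper routes the argument through Proposition~\ref{prop:unifinject}, exhibiting the explicit injectivity set $M^2$ and using the gluing form of strong irreducibility, whereas you argue pointwise via the density Fact for $[x]_\sim\cap X$; your explicit inclusion of $M^{-1}\cdot F$ in the window is, if anything, slightly more careful than the paper's tacit use of a symmetric $M$ containing $1_G$.
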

\begin{proof}
Suppose that $X$ is strongly irreducible. By Proposition~\ref{prop:unifinject}, it suffices to check that there is a finite injectivity set $N\subseteq G$. Let $M\subseteq G$ be a post-surjectivity set for $T$. We claim that $M^2$ is an injectivity set for $T$. If not, then we can find $x,y\in X$ such that $x(1_G)\neq y(1_G)$, yet \[T(x)\upharpoonright M^2=T(y)\upharpoonright M^2.\] By strong irreducibility of $X$, we can assume that $x\sim y$. Indeed, let $U\subseteq G$ be a finite memory set for $T$ and $V\subseteq G$ a finite irreducibility set for $X$. Then we can find $x',y'\in G$ such that $x'\upharpoonright M^2\cdot U=x\upharpoonright M^2\cdot U$ and $y'\upharpoonright M^2\cdot U=y\upharpoonright M^2\cdot U$ and $x'\upharpoonright (G\smallsetminus M^2\cdot U\cdot V)=y'\upharpoonright (G\smallsetminus M^2\cdot U\cdot V)$. It follows that $x'\sim y'$ and $T(x')\upharpoonright M^2=T(x)\upharpoonright M^2$, $T(y')\upharpoonright M^2=T(y)\upharpoonright M^2$.

Since $\{g\in G\colon T(x)(g)\neq T(y)(g)\}\subseteq G\smallsetminus M^2$, by Observation~\ref{obs:postsurjectivity}, we get that $\{g\in G\colon x(g)\neq y(g)\}\subseteq G\smallsetminus M$, which contradicts that $x(1_G)\neq y(1_G)$.\\

The `in particular' part follows by applying also Lemma~\ref{lem:post-surjective-surjective}.
\end{proof}
\begin{corollary}
Let $T\colon A^G\rightarrow A^G$ be post-surjective and pre-injective. Then it is injective and also an isomorphism.
\end{corollary}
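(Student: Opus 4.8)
The plan is to simply recognize the full shift $A^G$ as a (very special) strongly irreducible subshift and then feed everything into Theorem~\ref{thm:isom}. Concretely, I would set $X = Y = A^G$ and verify first that $A^G$ is strongly irreducible: taking the finite set $D = \{1_G\}$, for any finite patterns $p\colon P\to A$ and $p'\colon P'\to A$ with $P\cdot D\cap P' = P\cap P' = \emptyset$ one can just define $x\in A^G$ by $x\upharpoonright P = p$, $x\upharpoonright P' = p'$, and $x$ arbitrary elsewhere; this $x$ lies in $A^G = X_G$ trivially, so the strong irreducibility condition holds. (Of course $X_P = A^P$ for every $P$, so the hypotheses $p\in X_P$, $p'\in X_{P'}$ are vacuous.)

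Next I would upgrade the hypothesis on $T$: by Observation~\ref{obs:postsurjectivity}(1), a post-surjective CA $T\colon A^G\to A^G$ is automatically strongly post-surjective, so $T$ is a strongly post-surjective, pre-injective CA between the strongly irreducible subshifts $X = Y = A^G$. Applying the ``in particular'' clause of Theorem~\ref{thm:isom} then yields at once that $T$ is injective and an isomorphism. If one prefers to see the two conclusions separately: injectivity comes from the first assertion of Theorem~\ref{thm:isom} (strong irreducibility of $X$), surjectivity comes from Lemma~\ref{lem:post-surjective-surjective} (strong irreducibility of $Y$), and since $A^G$ is compact Hausdorff, a continuous $G$-equivariant bijection is automatically a homeomorphism with $G$-equivariant inverse, hence an isomorphism of the shift.

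There is essentially no obstacle here: the corollary is the specialization of Theorem~\ref{thm:isom} to full shifts, and the only thing to check, strong irreducibility of $A^G$, is immediate. If anything, the ``hard part'' was already absorbed into the machinery cited above --- namely the nontrivial implication in Observation~\ref{obs:postsurjectivity}(1), which is \cite[Lemma~1]{cap}, guaranteeing that ordinary post-surjectivity of a CA on the full shift already forces the quantitative (strong) form with a finite post-surjectivity set. Once that is in hand, the corollary is a one-line deduction.
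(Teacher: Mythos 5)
Your proof is correct and follows exactly the route the paper takes: invoke Observation~\ref{obs:postsurjectivity}(1) to pass from post-surjectivity to strong post-surjectivity and then apply Theorem~\ref{thm:isom} (with the trivial check that $A^G$ is strongly irreducible). The paper states this deduction in one line; your write-up just makes the same steps explicit.
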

\begin{proof}
It follows immediately from Theorem~\ref{thm:isom} and Observation~\ref{obs:postsurjectivity}.
\end{proof}

\section{More observations on injectivity and post-surjectivity}
Let us have a closer look at the tight connection between injectivity and post-surjectivity. 

The following lemma establishes the connection between the injectivity and post-surjectivity sets.
\begin{lemma}\label{lem:injectivitypostsurjectivity}
Let $T\colon A^G\rightarrow A^G$ be an injective and post-surjective cellular automaton. 
\begin{itemize}
    \item Let $N$ be a symmetric injectivity set for $T$. Then it is also a post-surjectivity set.
    \item Let $M$ be a symmetric post-surjectivity set for $T$. Then it is also an injectivity set.
\end{itemize}
\end{lemma}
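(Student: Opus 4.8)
The plan is to observe that both parts amount to reconciling two ways of writing an inclusion of the form $\Delta(x,y)\subseteq\Delta(T(x),T(y))\cdot(\text{finite set})$: the injectivity-set formulation, which holds for \emph{all} pairs $x,y\in A^G$ (Proposition~\ref{prop:unifinject}, rephrased using symmetry so that $h\in g\cdot N$ becomes $g\in h\cdot N$), and the post-surjectivity formulation, which via Observation~\ref{obs:postsurjectivity}(2) holds for \emph{almost equal} pairs. At the outset I would record that on the full shift $T$ is post-surjective iff strongly post-surjective (Observation~\ref{obs:postsurjectivity}(1)), and that injectivity implies pre-injectivity, so Observation~\ref{obs:postsurjectivity}(2) is available for any post-surjectivity set of $T$.

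For the first bullet, suppose $N$ is a symmetric injectivity set, and fix $x\in A^G$ and $z$ with $T(x)\sim z$. Post-surjectivity gives $y\sim x$ with $T(y)=z$. Applying the injectivity-set property to the pair $x,y$: for each $g\in\Delta(x,y)$ there is $h\in g\cdot N$ with $T(x)(h)\neq T(y)(h)$, i.e.\ $h\in\Delta(T(x),z)$; since $N$ is symmetric this yields $g\in\Delta(T(x),z)\cdot N$. Hence $\Delta(x,y)\subseteq\Delta(T(x),z)\cdot N$, which is exactly what is required for $N$ to be a post-surjectivity set.

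For the second bullet, suppose $M$ is a symmetric post-surjectivity set; I want to show $M$ is an injectivity set, i.e.\ $\Delta(x,y)\subseteq\Delta(T(x),T(y))\cdot M$ for all $x,y\in A^G$. By $G$-equivariance it suffices to show that if $x(1_G)\neq y(1_G)$ then $1_G\in\Delta(T(x),T(y))\cdot M$. Assume not; because $M$ is symmetric, $1_G\notin\Delta(T(x),T(y))\cdot M$ says precisely that $T(x)$ and $T(y)$ agree on $M$. Now localize: let $F$ be a memory set for $T$ (enlarge it so that $1_G\in F$), put $K=M\cdot F\cup\{1_G\}$, and define $x'$ to agree with $x$ on $K$ and with $y$ off $K$, while keeping $y'=y$. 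Then $x'\sim y'$, $x'(1_G)=x(1_G)\neq y(1_G)=y'(1_G)$, and since $h\cdot F\subseteq K$ for every $h\in M$, the values of $T(x')$ on $M$ coincide with those of $T(x)$, hence with those of $T(y)=T(y')$; so again $1_G\notin\Delta(T(x'),T(y'))\cdot M$. But $x'\sim y'$, so Observation~\ref{obs:postsurjectivity}(2) forces $\Delta(x',y')\subseteq\Delta(T(x'),T(y'))\cdot M$, contradicting $1_G\in\Delta(x',y')$.

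The routine first bullet is immediate; the real content is the second, where the point is to upgrade the "almost equal'' statement of Observation~\ref{obs:postsurjectivity}(2) to arbitrary pairs. The one place needing care is the surgery producing $x'$: the finite set $K$ must be large enough to contain $M\cdot F$ (so that $T(x')$ is unchanged on $M$) while still containing $1_G$ (so that the disagreement at the origin survives), and one must check $x'\sim y'$ so that Observation~\ref{obs:postsurjectivity}(2) genuinely applies.
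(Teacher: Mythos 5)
Your proof is correct and follows essentially the same route as the paper: the first bullet applies the injectivity-set characterization to the pair supplied by post-surjectivity together with symmetry of $N$, and the second combines Observation~\ref{obs:postsurjectivity}(2) with symmetry of $M$. The only difference is that you make explicit, via the surgery on $K=M\cdot F\cup\{1_G\}$, the reduction to $x\sim y$ that the paper dismisses with an unjustified ``without loss of generality.''
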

\begin{proof}
Let $N$ be a symmetric injectivity set (note that if $N$ is an arbitrary injectivity set, then $N\cup N^{-1}$ is symmetric and still an injectivity set). Suppose that it is not a post-surjectivity set. Then there exist $x,z\in A^G$ such that $\{g\in G\colon T(x)(g)\neq z(g)\}=\{1_G\}$, yet for every $y\sim x$ such that $T(y)=z$ we have $\{g\in G\colon x(g)\neq y(g)\}\not\subseteq N$. Choose such $y\sim x$ using post-surjectivity of $T$. There exists $g\notin N$ such that $x(g)\neq y(g)$. However, since $N$ is a symmetric injectivity set for $T$, we get that there is $h\in g\cdot N$ such that $T(x)(h)\neq T(y)(h)$. Since $h\neq 1_G$, this is a contradiction.

Conversely, suppose we are given a symmetric post-surjectivity set $M$. Suppose that it is not an injectivity set. Then there are $x,y\in A^G$ and $h\in G$ such that $x(h)\neq y(h)$, yet $T(x)\upharpoonright M=T(y)\upharpoonright M$. Without loss of generality, we may assume that $x\sim y$. Then $D=\{g\in G\colon T(x)(g)\neq T(y)(g)\}$ is also finite and $D\cap M=\emptyset$. However, by post-surjectivity, $\{g\in G\colon x(g)\neq y(g)\}\subseteq D\cdot M$. Since $h\notin D\cdot M$, we reach a contradiction.
\end{proof}

In the next lemma, we show the connection between the injectivity sets for $T$ and the memory sets for $T^{-1}$.
\begin{lemma}\label{lem:injectivity-memory}
Let $T\colon A^G\rightarrow A^G$ be an injective cellular automaton with a finite injectivity set $N$. Then $N$ is a memory set for $T^{-1}\colon T[A^G]\rightarrow A^G$.
\end{lemma}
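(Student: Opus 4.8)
The plan is to exhibit an explicit local rule for $T^{-1}$ with memory set $N$. First I would record the standing facts: since $T$ is injective, $T^{-1}$ is well defined on $Y:=T[A^G]$; moreover $Y$ is a subshift (closed, by compactness of $A^G$ and continuity of $T$, and $G$-invariant by equivariance of $T$), the map $T^{-1}\colon Y\to A^G$ is continuous (a continuous bijection from a compact space to a Hausdorff space is a homeomorphism) and $G$-equivariant, hence it is a cellular automaton and possesses \emph{some} finite memory set; the point is to show $N$ may be taken as one.

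By $G$-equivariance it is enough to show that the single value $T^{-1}(z)(1_G)$ depends only on the pattern $z\upharpoonright N$, for $z\in Y$. So the key step is the claim: if $z,z'\in Y$ satisfy $z\upharpoonright N=z'\upharpoonright N$, then $T^{-1}(z)(1_G)=T^{-1}(z')(1_G)$. To prove it, set $x=T^{-1}(z)$ and $y=T^{-1}(z')$ and suppose towards a contradiction that $x(1_G)\neq y(1_G)$. Applying the defining property of the injectivity set $N$ from Proposition~\ref{prop:unifinject} with $g=1_G$, there is $h\in 1_G\cdot N=N$ with $T(x)(h)\neq T(y)(h)$, i.e. $z(h)\neq z'(h)$, contradicting $z\upharpoonright N=z'\upharpoonright N$. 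Hence $x(1_G)=y(1_G)$, and we may unambiguously define $\sigma\colon Y_N\to A$ by $\sigma(z\upharpoonright N):=T^{-1}(z)(1_G)$.

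It then remains to verify that $\sigma$ is a local rule for $T^{-1}$ with memory set $N$ in the sense of the formula $(\ast)$. For $z\in Y$ and $x\in G$, using equivariance of $T^{-1}$ and invariance of $Y$, one computes $T^{-1}(z)(x)=(x^{-1}\cdot T^{-1}(z))(1_G)=T^{-1}(x^{-1}\cdot z)(1_G)=\sigma\bigl((x^{-1}\cdot z)\upharpoonright N\bigr)=\sigma(z\upharpoonright xN)$, which is exactly the required form; note $(x^{-1}\cdot z)\upharpoonright N\in Y_N$, so $\sigma$ is never evaluated outside its domain.

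I do not anticipate a genuine obstacle here: the whole argument is a one-line application of the injectivity set property together with equivariance. The only thing requiring a little care is the bookkeeping in passing between $z\upharpoonright xN$ and $(x^{-1}\cdot z)\upharpoonright N$, i.e. matching the reindexing convention used in $(\ast)$, and making sure the local rule is only claimed to be defined on the patterns $Y_N$ actually occurring in $Y$ rather than on all of $A^N$.
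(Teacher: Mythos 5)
Your proposal is correct and follows essentially the same route as the paper: the well-definedness of your local rule $\sigma(z\upharpoonright N)=T^{-1}(z)(1_G)$ via the injectivity-set property at $1_G$ is exactly the paper's disjointness of the sets $X_a=X_N\cap T[O_a]$, and the verification of the formula $(\ast)$ by $G$-equivariance matches the paper's check that $V\circ T(x)(1_G)=x(1_G)$. Your extra care about defining $\sigma$ only on $Y_N$ and about the reindexing between $z\upharpoonright xN$ and $(x^{-1}\cdot z)\upharpoonright N$ is consistent with the paper's conventions.
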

\begin{proof}
Let $X=T[A^G]$ and for every subset $Z\subseteq G$ let $X_Z$ be the set of restrictions of elements from $X$ on the set $Z$, i.e. $X_Z=\{x\upharpoonright Z\colon x\in X\}$. For $a\in A$, let $O_a$ be the basic clopen set $\{x\in A^G\colon x(1_G)=a\}$ and set $X_a=X_N\cap T[O_a]$. We have $X_N=\coprod_{a\in A} X_a$, i.e. $X_N=\bigcup_{a\in A} X_a$, and for $a\neq b\in A$, $X_a\cap X_b=\emptyset$. Indeed, the former is clear; for the latter, if for some $a\neq b\in A$ we had $X_a\cap X_b\neq \emptyset$, then there would be $x\in O_a$ and $y\in O_b$ such that $T(x)\upharpoonright N=T(y)\upharpoonright N$. That would contradict that $N$ is an injectivity set for $T$ since $x(1_G)\neq y(1_G)$ implies that for some $n\in N$, $T(x)(n)\neq T(y)(n)$.

Now we define $\tau\colon X_N\rightarrow A$ as follows. For $a\in A$ and $p\in X_N$ we set \[\tau(p)=a\;\text{ if and only if }\; p\in X_a.\] Since $X_N$ is the disjoint union of $(X_a)_{a\in A}$, this is well-defined. Let $V\colon X\rightarrow A^G$ be a cellular automaton defined by $\tau$. Let us check that $V=T^{-1}$. Since $V$ is $G$-equivariant, it suffices to check that for any $x\in A^G$, $x(1_G)=V\circ T(x)(1_G)$. Set $a=x(1_G)$. Then $x\in O_a$ and $T(x)\upharpoonright N\in X_a$, so $V\circ T(x)(1_G)=\tau(T(x)\upharpoonright N)=a$, and we are done.
\end{proof}

The finer analysis of injectivity and post-surjectivity will now have the following application. Given an injective CA $T\colon A^G\rightarrow A^G$, provided that we know some injectivity sets $N$, resp. $M$, for $T$, resp. for $T^{-1}$, in order to show that $T$ is surjective it suffices to verify that every pattern from $A^{MN}$ is in the image of $T$. Second, we show that the image of every injective CA is a subshift of finite type, and additionally we have some quantitative information about the size of forbidden patterns.

Let us just recall here that a \emph{GOE pattern} (Garden of Eden pattern) for $T\colon A^G\rightarrow A^G$ is a pattern $p\colon D\rightarrow A$ such that for no $f\in T[A^G]$, $f\upharpoonright D=p$. It is a basic application of compactness that if $T$ is not surjective, then there is a non-trivial GOE pattern for $T$.

\begin{theorem}\label{thm:injectivity-finitesubshift}
Let $T\colon A^G\rightarrow A^G$ be an injective CA. Let $N\subseteq G$ be a finite injectivity set for $T$ containing $1_G$ and $M\subseteq G$ be a finite injectivity set for $T^{-1}\colon T[A^G]\rightarrow A^G$ containing $1_G$ (which exists by Proposition~\ref{prop:unifinject}).
\begin{enumerate}
    \item \label{thm:inject-item1} Then $T[A^G]\subseteq A^G$ is a subshift of finite type, and moreover, the forbidden patterns are defined on $MN$.
    \item \label{thm:inject-item2} More generally, denote by $X^n$ the closed invariant subshift $T^n[A^G]$, where $n\in\Nat$. Then $X^n$ is a subshift of finite type whose forbidden patterns are defined on $MN^n$.
    \item \label{thm:inject-item3} If $T$ is not surjective, then there exists a GOE pattern for $T$ supported on $MN$.
\end{enumerate}

\end{theorem}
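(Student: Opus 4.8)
Write $X:=T[A^G]$. The plan rests on producing, from the two injectivity sets, two local rules with the advertised memory sets: a rule for $T^{-1}$ with memory set $N$ and a rule for $T$ with memory set $M$. By Lemma~\ref{lem:injectivity-memory}, $N$ is a memory set for $T^{-1}\colon X\to A^G$; let $\tau\colon X_N\to A$ be the corresponding local rule, so that $T^{-1}(x)(g)=\tau(x\!\upharpoonright\!gN)$ for $x\in X$, and extend $\tau$ arbitrarily to $\bar\tau\colon A^N\to A$, which induces a cellular automaton $\bar S\colon A^G\to A^G$ with $\bar S\!\upharpoonright\!X=T^{-1}$. The key further point is that \emph{$M$ is a memory set for $T$}. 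Indeed, saying that $M$ is an injectivity set for $T^{-1}$ means, by Proposition~\ref{prop:unifinject} applied to $T^{-1}$ and after writing an arbitrary pair of elements of $X$ as $T(y_1),T(y_2)$ with $y_1,y_2\in A^G$, that $T(y_1)(g)\neq T(y_2)(g)$ forces $y_1(h)\neq y_2(h)$ for some $h\in gM$; contrapositively, $y_1\!\upharpoonright\!gM=y_2\!\upharpoonright\!gM$ implies $T(y_1)(g)=T(y_2)(g)$, so $T(y)(g)$ depends only on $y\!\upharpoonright\!gM$. Denote the resulting local rule by $\sigma\colon A^M\to A$.

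For part~\ref{thm:inject-item1} I will prove: $z\in X$ if and only if $z\!\upharpoonright\!g(MN)\in X_{MN}$ for every $g\in G$. Since $A$ is finite this presents $X$ as a subshift of finite type whose forbidden patterns (the finitely many elements of $A^{MN}\setminus X_{MN}$) are defined on $MN$. One direction is trivial. Conversely, assume every translate of $z$ restricted to $MN$ lies in $X_{MN}$; since $1_G\in M$ we then have $z\!\upharpoonright\!gN\in X_N$ for all $g$, so $y(g):=\tau(z\!\upharpoonright\!gN)$ defines $y\in A^G$, and it suffices to check $T(y)=z$. Fix $g$ and choose $x\in X$ with $x\!\upharpoonright\!g(MN)=z\!\upharpoonright\!g(MN)$. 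For $h\in gM$ we have $hN\subseteq g(MN)$, hence $y(h)=\tau(z\!\upharpoonright\!hN)=\tau(x\!\upharpoonright\!hN)=T^{-1}(x)(h)$; thus $y$ and $T^{-1}(x)$ agree on $gM$, and therefore $T(y)(g)=\sigma(y\!\upharpoonright\!gM)=\sigma(T^{-1}(x)\!\upharpoonright\!gM)=T(T^{-1}(x))(g)=x(g)=z(g)$, the last equality because $1_G\in MN$.

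Part~\ref{thm:inject-item2} will follow by induction on $n$, with $n=1$ being part~\ref{thm:inject-item1}. The inductive step uses the identity $X^n=\{z\in A^G: z\in X\text{ and }\bar S(z)\in X^{n-1}\}$, which holds because for $z\in X$ one has $\bar S(z)=T^{-1}(z)$: if $z=T^n(u)$ then $z\in T[X^{n-1}]\subseteq X$ and $\bar S(z)=T^{n-1}(u)\in X^{n-1}$, while if $z\in X$ and $\bar S(z)=T^{-1}(z)\in X^{n-1}$ then $z=T(T^{-1}(z))\in T[X^{n-1}]=X^n$. By the inductive hypothesis $X^{n-1}$ is of finite type with forbidden patterns on $MN^{n-1}$, so ``$\bar S(z)\in X^{n-1}$'' is a condition on the windows $\bar S(z)\!\upharpoonright\!g(MN^{n-1})$; as $\bar S$ has memory set $N$, each such window is a function of $z\!\upharpoonright\!g(MN^{n-1})N=z\!\upharpoonright\!g(MN^n)$, making ``$\bar S(z)\in X^{n-1}$'' a finite-type condition on $z$ with window $MN^n$. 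Since ``$z\in X$'' is a finite-type condition with window $MN\subseteq MN^n$ (using $1_G\in N$), the conjunction is a finite-type condition with window $MN^n$, as required.

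Part~\ref{thm:inject-item3} is then immediate: if $T$ is not surjective, $X\subsetneq A^G$, so by the characterization in part~\ref{thm:inject-item1} we cannot have $X_{MN}=A^{MN}$, and any pattern $p\in A^{MN}\setminus X_{MN}$ is a GOE pattern supported on $MN$. The one genuinely substantial step is the observation in the first paragraph that $M$ is a memory set for $T$: that is exactly what forces the window $MN$ to appear, rather than a window built from some unrelated memory set of $T$. Everything else is bookkeeping with supports of patterns and with translates, together with the routine translation between ``condition on windows of shape $E$'' and the paper's definition of a subshift of finite type via forbidden patterns.
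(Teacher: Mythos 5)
Your proof is correct, and while it follows the paper's overall skeleton (window $MN$, reconstructing a preimage of $y$ by applying the local rule of $T^{-1}$ with memory set $N$, induction on $n$, and deducing (3) from (1)), the verification mechanism is genuinely different. The paper checks $T(T^{-1}(y))=y$ by a contradiction argument that uses both injectivity sets directly: assuming a discrepancy at $1_G$, it matches $y$ with some $y'\in X$ on $MN$, uses the injectivity set $M$ of $T^{-1}$ to propagate the discrepancy to some $h\in M$, then the injectivity set $N$ of $T$ to propagate it into $MN$, contradicting $T\circ T^{-1}\circ T\circ T^{-1}=T\circ T^{-1}$. You instead extract from the hypothesis on $M$ a second local rule: your observation that an injectivity set for $T^{-1}$ is a memory set for $T$ (the exact mirror of Lemma~\ref{lem:injectivity-memory}, and your derivation of it from Proposition~\ref{prop:unifinject} via writing points of $X$ as $T(y_1),T(y_2)$ is valid), which lets you verify $T(y)(g)=z(g)$ by a direct computation through $T^{-1}(x)$ for a locally matching $x\in X$. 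Likewise in (2) you replace the paper's repeated coordinatewise argument by the decomposition $X^n=X\cap \bar S^{-1}(X^{n-1})$ together with the general principle that pulling an SFT condition with window $MN^{n-1}$ back through a CA with memory set $N$ yields an SFT condition with window $MN^{n-1}N=MN^n$; this is a cleaner, more structural route, at the cost of introducing the extra rule $\sigma$ and the extended automaton $\bar S$, which the paper avoids. The only blemishes are notational (e.g.\ writing $z\upharpoonright g(MN)\in X_{MN}$ and $\sigma(y\upharpoonright gM)$ when the domains should be translated back to $MN$ and $M$), but the paper itself uses the same abuse, and your allowed-pattern set on $MN^n$ need not literally be $X^n_{MN^n}$, which is harmless since the statement only asks that the forbidden patterns be defined on $MN^n$ (and in fact any SFT presentation with that window can be replaced by the canonical one).
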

\begin{proof}
We start with \eqref{thm:inject-item1}. Let $N$ be a finite injectivity set for $T$ containing $1_G$. If $T$ is surjective, there is nothing to prove. So suppose that it is not and set $X=T[A^G]$, and more generally, set $X^n=T^n[A^G]$, for $n\in\Nat$. So $X^1=X$ and we may also use the notation $X^0=A^G$. By Lemma~\ref{lem:injectivity-memory}, $N$ is a memory set for $T^{-1}\colon X\rightarrow A^G$, thus $T^{-1}$ on $X$ is defined by some $\tau\colon X_N\rightarrow A$, where $X_N=\{x\upharpoonright N\colon x\in X\}$. Notice that $T^{-1}$, defined by the same $\tau\colon X_N\rightarrow A$, when restricted to $X^n$, for $n\geq 1$, is a continuous bijection from $X^n$ onto $X^{n-1}$ having the same injectivity set $M$. Set $D:=MN$ and set $F=A^D\smallsetminus X_D$. 

We claim that $F$ is a finite set of forbidden patterns defining $X$. Let us denote by $Y$ the subshift of $A^G$ defined by forbidden patterns from $F$. Let $x\in X$. It is clear that for every $g\in G$, $g^{-1}\cdot x\upharpoonright D\notin F$, so $X\subseteq Y$.

Conversely, let $y\in Y$ and let us show that $y\in X$. As $N\subseteq D$, we have that for every $g\in G$, $g^{-1}\cdot y\upharpoonright N\in X_N$. Since $T^{-1}$ is defined by $\tau\colon X_N\rightarrow A$, we get that $T^{-1}(y)$ is defined. Set $z:=T\circ T^{-1}(y)$. If we prove that $z=y$ then we are done since $z=T(T^{-1}(y))\in X$. We need to check that for every $g\in G$, $y(g)=z(g)$. We do it for $g=1_G$, the same argument then works for any $g$. Suppose that $y(1_G)\neq z(1_G)$. Since $y\upharpoonright D\notin F$, there is $y'\in X$ such that $y\upharpoonright D=y'\upharpoonright D$. Since $M$ is an injectivity set for $T^{-1}$ and $y'(1_G)\neq z(1_G)$ there exists $h\in M$ so that $T^{-1}(z)(h)\neq T^{-1}(y')(h)$. Since clearly $T^{-1}(y)(h)=T^{-1}(y')(h)$, as $N$ is a memory set for $T^{-1}$, we have $T^{-1}(z)(h)\neq T^{-1}(y)(h)$. Then since $N$ is an injectivity set for $T$ we get that there is $g\in MN=D$ such that $T\circ T^{-1}(z)(g)\neq T\circ T^{-1}(y)(g)$. However, we claim that $T\circ T^{-1}(z)=T\circ T^{-1}(y)$, and this contradiction will finish the proof. Indeed, we have \[T\circ T^{-1}(z)=T\circ T^{-1}\circ T\circ T^{-1}(y)=T\circ T^{-1}(y),\] where we used that $T^{-1}\circ T$ is the identity on $A^G$.
\bigskip

We continue with \eqref{thm:inject-item2}. We shall prove the statement by induction. For $n=1$ this has been proved in \eqref{thm:inject-item1}. Suppose that $n>1$ and the statement has been proved for $n-1$. Set $D_n=MN^n$ and $F_n=A^{D_n}\smallsetminus X^n_{D_n}$, and let $Y_n$ be the subshift of $A^G$ defined by forbidden patterns from $F_n$ (so $D=D_1$, $F=F_1$, and $Y=Y_1$). We claim that $X^n=Y_n$. Again, it is clear that $X^n\subseteq Y_n$, so we show the other inclusion. Pick $y\in Y_n$. We claim that
\begin{itemize}
    \item $T^{-1}(y)\in X^{n-1}$ and
    \item $T\circ T^{-1}(y)=y$.
\end{itemize}
This will together imply that $y\in X^n$. For the former, first notice that $T^{-1}(y)$ is well-defined. By the induction hypotheses, it suffices to show that for every $g\in G$, $T^{-1}(g^{-1}y)\upharpoonright MN^{n-1}\notin F_{n-1}$. We do it for $g=1_G$, for other elements it is analogous. Since $y\upharpoonright MN^n\notin F$ there exists $y'\in X^n$ such that $y\upharpoonright MN^n=y'\upharpoonright MN^n$. Then $T^{-1}(y')\in X^{n-1}$, therefore $T^{-1}(y')\upharpoonright MN^{n-1}\notin F_{n-1}$. Since $N$ is a memory set for $T^{-1}$, $T^{-1}(y)\upharpoonright MN^{n-1}=T^{-1}(y')\upharpoonright MN^{n-1}$, so indeed $T^{-1}(y)\upharpoonright MN^{n-1}\notin F_{n-1}$.

To show the latter, set $z:= T\circ T^{-1}(y)$. As in $\eqref{thm:inject-item1}$ we need to show that $z=y$, and by $G$-equivariance it suffices if we show that $z(1_G)=y(1_G)$. The argument is as in \eqref{thm:inject-item1}: If not, then since $M$ is an injectivity set for $T^{-1}$ there is $h\in M$ so that $T^{-1}(y)(h)\neq T^{-1}(z)(h)$, and so since $N$ is an injectivity set for $T$ there is $g\in MN$ so that \[T\circ T^{-1}(y)(g)\neq T\circ T^{-1}(z)(g)=T\circ T^{-1}\circ T\circ T^{-1}(y)(g)=T\circ T^{-1}(y)(g),\] where we used that $T^{-1}\circ T$ is the identity. This contradiction finishes the proof of \eqref{thm:inject-item2}.
\bigskip

We finish with \eqref{thm:inject-item3}. Again denote $MN$ by $D$, $T[A^G]$ by $X$, and by $Y_F$ the subshift of $A^G$ whose forbidden patterns are from $A^D\smallsetminus X_D$. It follows from the proof of \eqref{thm:inject-item1} that $T^{-1}\colon  Y_F\rightarrow A^G$ is injective and inverse to $T$. If there is no GOE pattern supported on $D$, then $Y_F=A^G$ and so $T^{-1}$ is defined on the whole $A^G$, and moreover it is inverse to $T$. That contradicts that $T$ is not surjective.
\end{proof}
\section{Dual surjunctive groups and ultraproducts}

Let us recall the following conjecture due to Gottschalk \cite{gott}.

\begin{conjecture} \label{con:sur}
Suppose $T\colon A^G \to A^G$ is $G$-equivariant continuous injective map, that is, an injective cellular automaton. Then $T$ is surjective and hence an isomorphism.
\end{conjecture}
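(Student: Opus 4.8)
The statement is Gottschalk's conjecture, open in full generality; the plan is therefore to prove it in the largest class where it is currently accessible, the sofic groups, and to note how the results above feed in. So suppose first that $G$ is sofic. I would start with an injective cellular automaton $T\colon A^G\to A^G$ with finite memory set $F$ and, using Proposition~\ref{prop:unifinject}, fix a finite injectivity set $N\ni 1_G$ for $T$. Assume toward a contradiction that $T$ is not surjective. By compactness $T$ has a Garden of Eden pattern, and by Theorem~\ref{thm:injectivity-finitesubshift}\eqref{thm:inject-item3} one can even take it to be a pattern $p$ on $MN$ (with $M$ a finite injectivity set for $T^{-1}$) that is realized by no configuration in $T[A^G]$.

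The key steps, in order, would be: first, fix a sofic approximation $(V_n,\sigma_n)$ of $G$ and let $\mathrm{Good}_n\subseteq V_n$ be the vertices whose large-radius neighbourhood under $\sigma_n$ genuinely models the corresponding ball in $G$, so that $|\mathrm{Good}_n|\geq(1-\varepsilon_n)|V_n|$ with $\varepsilon_n\to 0$. Second, transport $T$ to a finite map $T_n\colon A^{V_n}\to A^{V_n}$ by running the local rule through $\sigma_n$; the injectivity set $N$ then forces $T_n$ to be injective in restriction to the coordinates in $\mathrm{Good}_n$, giving the lower bound $|T_n[A^{V_n}]|\geq |A|^{(1-\varepsilon_n)|V_n|}$. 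Third, observe that $p$ is a forbidden pattern of $T_n[A^{V_n}]$ at every vertex whose $MN$-neighbourhood is faithfully modelled, and run a packing argument over a positive-density set of such vertices to get $|T_n[A^{V_n}]|\leq |A|^{(1-\delta)|V_n|}$ with $\delta>0$ depending only on $|MN|$. For $n$ large these two bounds collide, so $T$ must be surjective; Lemma~\ref{lem:injectivity-memory} then makes $T^{-1}$ a cellular automaton and $T$ an isomorphism.

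The hard part will be exactly the first step: for an arbitrary group there is no finite combinatorial model of $G$ on which to run such a counting argument, and no group is currently known to be non-sofic, so one cannot even stress-test the conjecture against a candidate. A genuinely general proof would seem to demand either a new, non-combinatorial obstruction to an injective cellular automaton failing surjectivity over an arbitrary group, or an explicit non-sofic group carrying an injective, non-surjective cellular automaton. Short of that, the workable programme is the dual one: show that post-surjectivity forces pre-injectivity (hence, by the corollary to Theorem~\ref{thm:isom}, injectivity and reversibility) for as wide a class of groups as possible, and mine injectivity itself for the quantitative structure of $T[A^G]$ recorded in Theorem~\ref{thm:injectivity-finitesubshift}.
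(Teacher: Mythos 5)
You have not proved the statement, but that is unavoidable: Conjecture~\ref{con:sur} is Gottschalk's conjecture itself, which is open, and the paper offers no proof of it either --- it is stated only to define the class of surjunctive groups, with the known partial result (sofic groups are surjunctive) attributed by citation to Gromov \cite{grom} and Weiss \cite{weiss}. So there is no ``paper proof'' to compare against, and your decision to say this explicitly and then prove the strongest accessible special case is the right call rather than a defect of honesty. The genuine gap is simply that your argument establishes surjunctivity of sofic groups, which is strictly weaker than the assertion of the conjecture (all groups), and nothing in the paper closes that distance.

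As for the sofic-case sketch itself: it is essentially the standard Gromov--Weiss counting argument, and the outline is sound, but two steps are stated more glibly than they can be carried out. In the second step, injectivity of $T$ with injectivity set $N$ does not make $T_n$ injective ``in restriction to the coordinates in $\mathrm{Good}_n$''; what the injectivity set actually gives is that the pattern of $x$ on a good vertex $v$ is determined by the pattern of $T_n(x)$ on the $\sigma_n$-image of $v\cdot N$, so the lower bound on $|T_n[A^{V_n}]|$ comes from counting patterns on an interior subset of $\mathrm{Good}_n$ with boundary losses of size controlled by $|N|$ and $\varepsilon_n$; this is where the bookkeeping lives. In the third step, the upper bound needs a packing of pairwise disjoint translated copies of $MN$ inside the faithfully modelled part of $V_n$ (so that the forbidden pattern $p$ cuts the count multiplicatively and independently at each copy), not merely a positive-density set of vertices whose $MN$-neighbourhoods may overlap. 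Both points are exactly how the classical proof proceeds, so they are repairable; also note that invoking Theorem~\ref{thm:injectivity-finitesubshift}\eqref{thm:inject-item3} is a pleasant quantitative refinement but not needed --- any finite Garden of Eden pattern, which exists by compactness once $T$ is non-surjective, suffices for the argument.
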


A group $G$ is called \emph{surjunctive} if Conjecture \ref{con:sur} is true for $G$ and any finite $A$. The class of surjunctive groups is closed under subgroups \cite[Lemma 1.1]{weiss} and ultraproducts \cite[Theorem 3]{gleb3}. All sofic groups are surjunctive \cite{grom, weiss}.

A group $G$ is \emph{dual surjunctive} if every post-surjective cellular automaton $T\colon A^G\to A^G$ is pre-injective and hence is an isomorphism by Theorem \ref{thm:isom}. All sofic groups are dual surjunctive \cite[Theorem 2]{cap}.

One may introduce another classes of groups.
\begin{definition}
We call a group $G$ \emph{s-surjunctive} if for any finite set $A$ and any strongly irreducible subshift of finite type $X\subseteq A^G$, every injective CA $T\colon X\rightarrow X$ is surjective.

Analogously, we call a group $G$ \emph{dual s-surjunctive} if for any finite set $A$ and any strongly irreducible subshift of finite type $X\subseteq A^G$, every strongly post-surjective CA $T\colon X\rightarrow X$ is pre-injective (and hence an isomorphism by Theorem~\ref{thm:isom}).\\

Moreover, we introduce the notions of \emph{ss-surjunctivity} and \emph{ss-dual surjunctivity}, which are defined as s-surjunctivity, resp. s-dual surjunctivity, just the subshift $X\subseteq A^G$ in the definition is required to be only strongly irreducible, not necessarily of finite type. 
\end{definition}
We recall here that the \emph{Myhill property} of a subshift or a more general dynamical system is the property that pre-injectivity of a continuous $G$-equivariant map implies its surjectivity, and the \emph{Moore property} is the converse, i.e. surjectivity implies pre-injectivity.

\noindent{\bf Examples.}
\begin{enumerate}
    \item Every amenable group is ss-surjunctive (and therefore also s-surjunctive). This follows from \cite{csc12}, where the authors prove the Myhill property for amenable groups and strongly irreducible subshifts.
    \item Every amenable group is both s-surjunctive and s-dual surjunctive. This follows from \cite{Fio03}, where the author shows the Garden of Eden theorem for amenable groups and strongly irreducible subshifts of finite type.
\end{enumerate}

 On the other hand, the existence of ss-dual surjunctive groups is more delicate. Clearly, every finite group is ss-dual surjunctive and so also every locally finite group is ss-dual surjunctive. We conjecture that every $G$ that contains $\Int$ as a subgroup is not ss-dual surjunctive. Fiorenzi in \cite[Section 3]{Fio00} (see also \cite[Exercise 5.49]{csc_book}) shows that the Moore property does not hold for $\Int$ and strongly irreducible subshifts. Her example might be also a counterexample disproving ss-dual surjunctivity for $\Int$ and groups containing $\Int$.

\begin{problem}
Are sofic groups s-surjunctive and s-dual surjunctive?
\end{problem}

Now we prove that the class of dual surjunctive groups is closed under subgroups and ultraproducts. The techniques used in the proof can be also applied to get a shorter proof of the result of Glebsky and Gordon from \cite{gleb3} that surjunctive groups are closed under ultraproducts.

\begin{lemma}
Let $H$ be a subgroup of a dual surjunctive group $G$. Then $H$ is dual surjunctive.
\end{lemma}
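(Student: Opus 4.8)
The plan is to reduce dual surjunctivity of $H$ to that of $G$ by \emph{inducing} cellular automata from $A^H$ up to $A^G$. Fix a finite set $A$ and a post-surjective cellular automaton $T\colon A^H\to A^H$ with local rule $\tau\colon A^F\to A$, $F\subseteq H$ finite; we must show $T$ is pre-injective (whence, as noted after Theorem~\ref{thm:isom}, $T$ is an isomorphism). Let $T^+\colon A^G\to A^G$ be the cellular automaton over $G$ with the \emph{same} memory set $F$ and local rule $\tau$, which makes sense since $F\subseteq H\subseteq G$. The first thing I would establish is a fibrewise description of $T^+$ through the coset structure: choosing a transversal $R$ for the \emph{left} cosets $G/H$ with $1_G\in R$, the map $f\mapsto (f_{[r]})_{r\in R}$, where $f_{[r]}(h):=f(rh)$, is a bijection $A^G\to (A^H)^{R}$, and a short computation using $xF=r(hF)\subseteq rH$ for $x=rh$ shows $T^+(f)(rh)=\tau\bigl(f_{[r]}\upharpoonright hF\bigr)=T(f_{[r]})(h)$. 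In other words, under this identification $T^+$ is the coordinatewise map $(u_r)_{r\in R}\mapsto (T(u_r))_{r\in R}$; in particular it is $G$-equivariant (this can also be checked directly from $H$-equivariance of $T$ together with the cocycle identity $g^{-1}r=r'\eta$, $r'\in R$, $\eta\in H$).

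With this identification the rest is bookkeeping with almost-equality, using that $f\sim f'$ in $A^G$ if and only if $f_{[r]}=f'_{[r]}$ for all but finitely many $r\in R$ and $f_{[r]}\sim f'_{[r]}$ in $A^H$ for the remaining ones. Step one: $T^+$ is post-surjective. Indeed, if $T^+(f)\sim z$ then $T(f_{[r]})=z_{[r]}$ for cofinitely many $r$ and $T(f_{[r]})\sim z_{[r]}$ for the remaining finitely many $r$; applying post-surjectivity of $T$ in each of those coordinates yields $w_r\sim f_{[r]}$ with $T(w_r)=z_{[r]}$, and the configuration $f'$ obtained by replacing $f_{[r]}$ by $w_r$ in those coordinates (and leaving the others unchanged) satisfies $f'\sim f$ and $T^+(f')=z$. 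Since $G$ is dual surjunctive, $T^+$ is therefore pre-injective. Step two: pre-injectivity of $T^+$ forces pre-injectivity of $T$. Given $u\sim u'$ in $A^H$ with $T(u)=T(u')$, put these in the $1_G$-coordinate of two configurations of $A^G$ all of whose other coordinates agree with a single fixed element of $A^H$; the two configurations are $\sim$ and have the same image under $T^+$, hence coincide, so $u=u'$. This completes the proof.

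The one point that genuinely needs care — and where a naive attempt breaks down — is the $G$-equivariance of the induced automaton. It is tempting to ``apply $T$ fibrewise'' over the \emph{right} cosets $H\backslash G$, whose index set carries the trivial $H$-action; but that map is only $H$-equivariant, not $G$-equivariant, because the window $xF$ of a cell $x$ need not stay in the same right coset as $x$. Passing to \emph{left} cosets repairs exactly this: since $F\subseteq H$, the window $xF=r(hF)$ of $x=rh$ remains inside the left coset $rH$, which is precisely what makes $T^+$ commute with the full $G$-action rather than merely with $H$. Everything else is elementary.
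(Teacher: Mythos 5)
Your proof is correct and takes essentially the same route as the paper: extend $T$ to a cellular automaton $T^+\colon A^G\to A^G$ with the same local rule and memory set $F\subseteq H$, show $T^+$ is post-surjective, invoke dual surjunctivity of $G$ to get pre-injectivity of $T^+$, and descend pre-injectivity back to $T$. The only difference is that you verify by hand, via the left-coset decomposition $A^G\cong (A^H)^{G/H}$, the post-surjectivity of the induced automaton (which the paper cites from Capobianco--Kari--Taati) and the easy descent of pre-injectivity, both of which you carry out correctly.
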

\begin{proof}
Let $T\colon A^H\rightarrow A^H$ be a post-surjective CA. The map on a memory set for $T$ also defines a CA $T'\colon A^G\rightarrow A^G$ which is also post-surjective by \cite[Proposition 4]{cap}. Therefore, since $G$ is dual surjunctive, $T'$ is pre-injective, and it easily follows that $T$ is pre-injective as well.
\end{proof}
\begin{theorem}\label{thm:dualsurjultraprod}
Let $(G_n)_{n\in\N}$ be a sequence of dual surjunctive groups. Let $\U$ be an ultrafilter on $\Nat$. Then the ultraproduct $\prod_\U G_n$ is dual surjunctive as well.
\end{theorem}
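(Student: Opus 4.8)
The plan is to show that any post-surjective cellular automaton on $A^{\prod_\U G_n}$ is pre-injective, using the fact that each $G_n$ is dual surjunctive together with a \L{}o\'s-type transfer argument. Set $G = \prod_\U G_n$ and let $T\colon A^G \to A^G$ be post-surjective with finite memory set $F \subseteq G$. By Observation~\ref{obs:postsurjectivity}(1), $T$ is strongly post-surjective, so fix a finite post-surjectivity set $M \subseteq G$. The key point is that both $F$ and $M$, being finite subsets of the ultraproduct, are represented coordinatewise: there are finite sets $F_n, M_n \subseteq G_n$ (of bounded, eventually constant size) with $F = \prod_\U F_n$, $M = \prod_\U M_n$, and moreover the local rule $\tau\colon A^F \to A$ and the finitely many ``witnessing'' patterns that encode strong post-surjectivity on the finite window $M F \cup F$ can all be pushed down to corresponding data on the $G_n$.

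The main work is to use this to build, for $\U$-almost all $n$, a post-surjective CA $T_n\colon A^{G_n}\to A^{G_n}$ with memory set $F_n$ and post-surjectivity set $M_n$. The subtlety is that ``post-surjective'' and ``pre-injective'' are not first-order properties of a finite structure, so one cannot invoke \L{}o\'s's theorem directly on the groups; instead one works with the strong, quantitative versions, which \emph{are} expressible by finitely many local constraints. Concretely, strong post-surjectivity of $T$ with set $M$ says: for every finite $E\subseteq G$, every pattern $q\colon E\cdot M \to A$, and every $x\in A^G$ with $\Delta(T(x), \hat q) \subseteq E$ (where $\hat q$ extends $q$ by $T(x)$ outside), there is a correction of $x$ on $E\cdot M$ making its image agree with the prescribed values. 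By a compactness/finiteness reduction it suffices to verify this for $E$ a single translate of a fixed finite ``test set'', and the existence of the required local correction is then a statement about the finite partial structure $(A^{F'}, \tau\upharpoonright)$ over a fixed finite subset $F'$ of $G$ — hence transfers to $\U$-almost all $G_n$. One then checks $T_n$ is a genuine CA (its local rule is well-defined $\U$-a.e. since $\tau$ is), that $T_n$ is strongly post-surjective with set $M_n$ (the transferred local corrections assemble into global ones by a gluing/compactness argument on $A^{G_n}$), and therefore, since $G_n$ is dual surjunctive, $T_n$ is pre-injective.

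Finally I would transfer pre-injectivity back up. Suppose $T$ were not pre-injective: there exist $x \sim y$ in $A^G$, $x\neq y$, with $T(x)=T(y)$. Then $\Delta(x,y)$ is a nonempty finite subset of $G$, say of size $k$, and on the finite set $\Delta(x,y)\cdot F$ the patterns $x\upharpoonright$ and $y\upharpoonright$ disagree somewhere while $T$ maps both to the same pattern on $\Delta(x,y)$; this is again a finitary configuration that pushes down to $\U$-almost every coordinate, producing $x_n \neq y_n$ in $A^{G_n}$ with $x_n\sim y_n$ and $T_n(x_n)=T_n(y_n)$, contradicting pre-injectivity of $T_n$. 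Hence $T$ is pre-injective, and by Theorem~\ref{thm:isom} it is an isomorphism, so $G$ is dual surjunctive.

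\smallskip

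The step I expect to be the main obstacle is the rigorous ``descent'' of strong post-surjectivity from the ultraproduct to the factors: one must argue that the infinitely many instances of the post-surjectivity condition (over all finite $E$ and all patterns) collapse, via $G$-equivariance and compactness, to \emph{finitely many} local verifications on a fixed finite window, so that \L{}o\'s's theorem applies to the (finite, hence first-order) data $(A^{F'}, \tau\upharpoonright_{F'})$. Making the bookkeeping of the finite sets $F_n, M_n$ — and the choice of a single bounded ``test radius'' that certifies strong post-surjectivity — precise and uniform in $n$ is where the care is needed; the reverse transfer of pre-injectivity, by contrast, is straightforward since a failure of pre-injectivity is already witnessed by a single finite configuration.
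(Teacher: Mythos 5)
Your proposal is correct in substance and follows the same overall architecture as the paper's proof: represent the local rule as $\tau=\prod_\U\tau_n$ via \L o\'{s}'s theorem, show that $T_n$ is post-surjective for $\U$-many $n$, invoke dual surjunctivity of each $G_n$ to get pre-injectivity of $T_n$, and then derive a contradiction from a putative failure of pre-injectivity of $T$. Where you genuinely diverge is in how the two transfer steps are executed. For the descent, the paper argues by contradiction using the dense, shift- and $\sim$-invariant set $\mathcal{I}$ of internal configurations $(c_n)_\U$, on which $T$ acts as the coordinatewise ultraproduct of the $T_n$; you instead reformulate strong post-surjectivity as a purely finitary statement --- correction of a single defect at $1_G$ within the window $M$, which by $G$-equivariance and repairing finitely many defects one at a time is equivalent to strong post-surjectivity with the same set $M$ --- i.e.\ a statement about patterns on a fixed finite window together with the partial multiplication table of that window and the map $\tau$, which \L o\'{s} transfers verbatim to $\U$-many $(G_n,\tau_n)$. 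This works, with two bookkeeping corrections: the relevant window is roughly $MF^{-1}F\cup F$ rather than $MF\cup F$, and no compactness is needed to ``assemble'' local corrections, since a corrected pattern pastes directly into a corrected configuration (positions outside $MF^{-1}$ are unaffected). For the ascent, your route is in fact simpler than the paper's: the paper approximates the witnesses $c\sim d$, $T(c)=T(d)$, by nets of internal configurations with prescribed difference set $D_n$ and runs a compactness/subnet argument on the image differences, whereas you note that failure of pre-injectivity is witnessed by a finite mutually erasable pattern pair --- two patterns on $DF^{-1}F$ agreeing off $D=\Delta(c,d)$ whose images agree on $DF^{-1}$ (not merely on $D$, as you wrote, but the repair is immediate since the images automatically agree off $DF^{-1}$) --- and such a finite witness pushes down by \L o\'{s} to contradict pre-injectivity of $T_n$. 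Both of your transfer arguments are completable as sketched; the payoff is a more syntactic proof avoiding the paper's net/compactness argument, at the cost of the window bookkeeping you yourself flag as the delicate point.
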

\begin{proof} 
Fix an ultrafilter $\U$ and assume without loss of generality that it is non-principal. Denote the ultraproduct $\prod_\U G_n$ by $\G$. Let $T\colon A^\G\rightarrow A^\G$ be a post-surjective continuous $\G$-equivariant map. $T$ is given by a map $\tau\colon A^F\rightarrow A$, where $F\subseteq \G$ is, without loss of generality, a finite symmetric memory set, which is also a post-surjectivity set for $T$. By \L o\'{s}'s theorem, we can find finite symmetric sets $F_n\subseteq G_n$ (with $|F_n|=|F|$) and maps $\tau_n\colon A^{F_n}\rightarrow A$ such that $F=\prod_\U F_n$ and $\tau=\prod_\U \tau_n$. For each $n$ we can then define a continuous $G_n$-equivariant map $T_n\colon A^{G_n}\rightarrow A^{G_n}$ using $\tau_n$.

For a sequence $(c_n)_{n\in\N}$, where $c_n\in A^{G_n}$, we denote by $(c_n)_\U$ the element $c=\prod_\U c_n$. That is, the element $c\in A^\G$ such that for each $g\in \G$, represented by a sequence $(g_n)_{n\in\N}$, where $g_n\in G_n$, \[c(g)=a\text{ if and only if }\forall^\U n\; (c_n(g_n)=a).\] Denote by $\mathcal{I}$ the subset of $A^\G$ consisting of elements of the form $(c_n)_\U$. It is straightforward to verify that $\mathcal{I}\subseteq A^\G$ is a dense subset which is invariant under the action of $\G$ and under the relation $\sim$.
\medskip

\begin{claim}
For $\U$-many $n$, $F_n$ is a post-surjectivity set for $T_n$. In particular, for $\U$-many $n$, $T_n$ is post-surjective.
\end{claim}

Indeed, otherwise we get for $\U$-many $n$, elements $c_n$ and $e_n\sim d_n:=T_n(c_n)$ such that there is no $c'_n\sim c_n$ satisfying $\Delta(c_n,c'_n)\subseteq \Delta(d_n,e_n)\cdot F_n$. Clearly, without loss of generality, we can assume that for $\U$-many $n$ we have $\Delta(d_n,e_n)=\{1_{G_n}\}$. 

Then we have
\begin{itemize}
\item $T((c_n)_\U)=(d_n)_\U$,
\item $\Delta((d_n)_\U,(e_n)_\U)=\{1_\G\}$.
\end{itemize}

Since $T$ has $F$ as a post-surjectivity set, we can find $c'\in A^\G$ such that $T(c')=(e_n)_\U$ and $\Delta(c',(e_n)_\U)\subseteq F$. Since the set $\mathcal{I}$ is invariant under the relation $\sim$, we have $c'\in\mathcal{I}$ and we can find elements $c'_n\in A^{G_n}$, for each $n$, so that $c'=(c'_n)_\U$. It easily follows that for $\U$-many $n$, $\Delta(c'_n,c_n)\subseteq F_n$ and $T_n(c'_n)=e_n$, a contradiction. This finishes the proof of the claim.\\

By our assumption, that the groups $G_n$ are dual surjunctive, it follows that for $\U$-many $n$, $T_n\colon  A^{G_n}\rightarrow A^{G_n}$ is pre-injective.
\bigskip

Now suppose that $T$ is not pre-injective. This means that there are elements $c\sim d\in A^\G$ such that $c\neq d$ and $T(c)=T(d)$. Denote by $D$ the non-empty finite set $\Delta(c(g)\neq d(g))$. Again by \L o\'{s}'s theorem, we can find non-empty finite sets $D_n\subseteq G_n$ so that $D=\prod_\U D_n$.

Now since $\mathcal{I}$ is dense in $A^\G$ we can find nets of sequences $\{(c_n^\alpha)_n\colon \alpha\in S\}$ and $\{(d_n^\alpha)_n\colon \alpha\in S\}$, where $S$ is some index set and we have
\begin{itemize}
\item $(c_n^\alpha)_\U\to c$ and $(d_n^\alpha)_\U\to d$,
\item for every $\alpha\in S$ and for $\U$-many $n$, $\Delta(c_n^\alpha, d_n^\alpha)=D_n$.
\end{itemize}

It follows that for every $\alpha\in S$ and $\U$-many $n$, since $T_n$ is pre-injective with memory set $F_n$, that we have \[\emptyset\neq \Delta(T_n(c_n^\alpha), T_n(d_n^\alpha))\subseteq D_n\cdot F_n.\]
Consequently, we get that for every $\alpha\in S$ \[\emptyset\neq \Delta(T((c_n^\alpha)_\U), T((d_n^\alpha)_\U))\subseteq D\cdot F.\]

By compactness, by passing to a subnet if necessary, we can without loss of generality assume that there exists a non-empty finite set $E\subseteq D\cdot F$ such that for every $\alpha\in S$, \[\emptyset\neq \Delta(T((c_n^\alpha)_\U),T((d_n^\alpha)_\U))=E.\]
Since $(c_n^\alpha)_\U\to c$ and $(d_n^\alpha)_\U\to d$, and also $T((c_n^\alpha)_\U)\to T(c)$ and $T((d_n^\alpha)_\U)\to T(d)$, we obtain that  \[\emptyset\neq \Delta(T(c),T(d))=E,\] in particular $T(c)\neq T(d)$. This contradiction finishes the proof of the theorem.
\end{proof}

The previous result has as a corollary a topological description of the dual surjunctive groups in the space of marked groups. Let us define the background.

Let $S$ be a fixed finite set. One can topologize the set of (isomorphism classes of) groups with $S$ as a generating set as follows. First we identify each such a group $G$ with $F_S/N$, where $F_S$ is a free group on generators from $S$ and $N\triangleleft F_S$ is a normal subgroup. Then it suffices to notice that the set of normal subgroups of $F_S$ is a closed subset of $2^{F_S}$, therefore it is a compact metrizable space (homeomorphic to the Cantor space). Let us denote this space by $X_S$ (see \cite{mar, isomar} and the references therein).

It is known that for a fixed finite set $S$, the set of surjunctive groups is closed in $X_S$ (see \cite[Section 3.7]{csc_book}, \cite[Corollary 1.3]{csc11} or \cite{grom,gleb3}). We prove an analogous result for the set of dual surjunctive groups.
\begin{theorem}\label{thm:closedinmarkedgrps}
For a fixed finite set $S$, the set of dual surjunctive groups is closed in the space of $S$-marked groups $X_S$.
\end{theorem}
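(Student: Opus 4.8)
The plan is to reduce the statement to two facts already at our disposal: that the class of dual surjunctive groups is closed under ultraproducts (Theorem~\ref{thm:dualsurjultraprod}) and that it is closed under passing to subgroups (the subgroup lemma proved just before Theorem~\ref{thm:dualsurjultraprod}). The bridge between the topology of $X_S$ and ultraproducts is the standard observation that a marked group lying in the closure of a family of marked groups embeds into an ultraproduct of that family.

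First I would record that $X_S$ is a closed subspace of the Cantor space $2^{F_S}$, hence compact and metrizable; consequently the closure of the set of dual surjunctive marked groups coincides with its sequential closure. So it suffices to fix $G=F_S/N$ in that closure and a sequence of dual surjunctive groups $G_n=F_S/N_n$ with $N_n\to N$ in $2^{F_S}$, and to show $G$ is dual surjunctive.

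Next, fix any non-principal ultrafilter $\U$ on $\Nat$ and consider the homomorphism $\varphi\colon F_S\to\prod_\U G_n$ sending each generator $s\in S$ to the class of the sequence of its images in the groups $G_n$. The crux is the computation of $\ker\varphi$: for a word $w\in F_S$, convergence $N_n\to N$ in $2^{F_S}$ means that the truth value of the statement ``$w\in N_n$'' is eventually constant and equal to ``$w\in N$'', so the set $\{n\colon w\in N_n\}$ is cofinite if $w\in N$ and finite if $w\notin N$. Since $\U$ is non-principal, \L o\'s's theorem then gives $\varphi(w)=1$ if and only if $w\in N$. Hence $\varphi$ factors through an injective homomorphism $G=F_S/N\hookrightarrow\prod_\U G_n$.

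Finally, by Theorem~\ref{thm:dualsurjultraprod} the ultraproduct $\prod_\U G_n$ is dual surjunctive, and by the subgroup lemma every subgroup of a dual surjunctive group is dual surjunctive; therefore $G$ is dual surjunctive, which proves that the set of dual surjunctive groups is closed in $X_S$. The only step demanding genuine (if mild) care is the identification $\ker\varphi=N$, i.e.\ that marked-group convergence is exactly the condition making the canonical generators of $G$ satisfy in the ultraproduct precisely the relations that present $G$; once this is in place, the rest is formal.
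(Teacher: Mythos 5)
Your proposal is correct and follows essentially the same route as the paper: embed the limit group $G=F_S/N$ into a non-principal ultraproduct of the dual surjunctive groups $F_S/N_n$ via the diagonal map on the generators, then apply Theorem~\ref{thm:dualsurjultraprod} together with closure under subgroups. The only difference is that you spell out the kernel computation (marked-group convergence forces $\ker\varphi=N$), a step the paper dismisses as ``clearly a monomorphism,'' and your verification of it is accurate.
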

\begin{proof}
Let $(N_i)_{i\in\Nat}$ be a sequence of normal subgroups of $F_S$ converging to a normal subgroup $N\triangleleft F_S$ such that for each $i\in\Nat$, $F_S/N_i$ is dual surjunctive. We prove that $G:=F_S/N$ is dual surjunctive. Pick a non-principal ultrafilter $\U$ and let $\G$ be the corresponding ultraproduct of $(F_S/N_i)_{i\in\Nat}$. By Theorem~\ref{thm:dualsurjultraprod}, $\G$ is dual surjunctive. The map from $G$ to $\G$ defined on the generating set $S$ by the diagonal map $s\to (s)_\U$ is clearly a monomorphism. So $G$ embeds as a subgroup of $\G$ and therefore it is dual surjunctive itself.
\end{proof}
\begin{corollary}\label{cor:fullyresiduallyDS}
Fully residually dual surjunctive groups are dual surjunctive.
\end{corollary}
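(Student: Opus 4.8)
The plan is to realize $G$ as a subgroup of an ultraproduct of dual surjunctive groups and then invoke Theorem~\ref{thm:dualsurjultraprod} together with the subgroup lemma (the Lemma immediately preceding Theorem~\ref{thm:dualsurjultraprod}). Recall that $G$ being \emph{fully residually dual surjunctive} means that for every finite subset $W\subseteq G\smallsetminus\{1_G\}$ there is a dual surjunctive group $G_W$ and a homomorphism $\phi_W\colon G\to G_W$ with $\phi_W(g)\neq 1_{G_W}$ for all $g\in W$.

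First I would set up the ultrafilter. Let $I$ be the set of finite subsets of $G\smallsetminus\{1_G\}$, and for $g\in G\smallsetminus\{1_G\}$ put $I_g=\{W\in I:g\in W\}$; the family $\{I_g\}_g$ has the finite intersection property, since $\{g_1,\dots,g_n\}\in I_{g_1}\cap\cdots\cap I_{g_n}$, so it extends to an ultrafilter $\U$ on $I$. Form $\G:=\prod_\U G_W$. The diagonal map $\Phi\colon G\to\G$ sending $g$ to the class of $(\phi_W(g))_{W\in I}$ is a group homomorphism, and it is injective: if $g\neq 1_G$, then $\phi_W(g)\neq 1_{G_W}$ for every $W\in I_g\in\U$, hence $\Phi(g)\neq 1_\G$. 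Thus $G$ embeds into $\G$.

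Now $\G$ is an ultraproduct of dual surjunctive groups, so by Theorem~\ref{thm:dualsurjultraprod} it is dual surjunctive (the proof given there applies verbatim with an arbitrary index set in place of $\Nat$; alternatively, when $G$ is countable one may take $I=\Nat$ directly by enumerating $G\smallsetminus\{1_G\}$ and using a non-principal ultrafilter on $\Nat$, landing exactly in the stated form of the theorem). Since $G$ is isomorphic to a subgroup of $\G$, and subgroups of dual surjunctive groups are dual surjunctive, we conclude that $G$ is dual surjunctive.

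The argument is essentially routine; the only point needing care is the choice of index set and ultrafilter that makes $\Phi$ injective, i.e. the standard fact that a fully residually $\mathcal{C}$ group embeds into an ultraproduct of groups from $\mathcal{C}$. One could instead deduce the finitely generated case directly from Theorem~\ref{thm:closedinmarkedgrps}: fixing a finite generating set $S$ and writing $G=F_S/N$ with $\pi\colon F_S\to G$ the quotient, full residuality produces normal subgroups $N_i\supseteq N$ with $F_S/N_i$ dual surjunctive (as a subgroup of some $G_W$) and $N_i\to N$ in $X_S$ — take an exhaustion $W_1\subseteq W_2\subseteq\cdots$ of $F_S$ by finite sets and let $N_i$ be the kernel of a homomorphism $F_S\to G_{W}$ separating the elements of $\pi(W_i)\smallsetminus\{1_G\}$, so that $N_i\cap W_i\subseteq N$ — and then Theorem~\ref{thm:closedinmarkedgrps} applies. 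The general non-finitely-generated case would then require the extra observation that dual surjunctivity passes from all finitely generated subgroups to the whole group, which the ultraproduct argument above sidesteps.
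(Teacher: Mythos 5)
Your argument is correct, but it takes a different route from the paper. The paper first reduces to the case where $G$ is finitely generated (asserting this reduction without proof), observes that a finitely generated fully residually dual surjunctive group is a limit of dual surjunctive groups in the space of marked groups $X_S$, and then applies Theorem~\ref{thm:closedinmarkedgrps}; your secondary sketch via exhausting sets $W_i$ and kernels $N_i$ with $N_i\cap W_i\subseteq N$ is exactly the ``easy to see'' step the paper leaves implicit. Your primary argument instead embeds $G$ directly into an ultraproduct $\prod_\U G_W$ indexed by the finite subsets of $G\smallsetminus\{1_G\}$ and invokes Theorem~\ref{thm:dualsurjultraprod} together with the subgroup lemma. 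This buys you the general (not necessarily finitely generated, not necessarily countable) case without needing the local-to-global fact that dual surjunctivity of all finitely generated subgroups implies dual surjunctivity of $G$, which is what the paper's ``we may suppose it is finitely generated'' silently relies on; the price is that Theorem~\ref{thm:dualsurjultraprod} is stated only for ultrafilters on $\Nat$, so you must note, as you do, that its proof (\L o\'s's theorem, density of the set $\mathcal{I}$, compactness with nets) goes through verbatim over an arbitrary index set. The two routes are closely related in substance, since Theorem~\ref{thm:closedinmarkedgrps} is itself proved by an ultraproduct embedding, but yours is the more self-contained and more general of the two.
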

\begin{proof}
Let $G$ be fully residually dual surjunctive. We may suppose it is finitely generated - by a finite generating set $S$. It is then easy to see that $G$ is a limit, in the space of marked groups $X_S$, of dual surjunctive groups. So it is dual surjunctive itself by Theorem~\ref{thm:closedinmarkedgrps}.
\end{proof}
Using a result of Arzhantseva and Gal \cite{ArG}, we can now obtain the same closure property that they have originally obtained for the class of surjunctive groups.
\begin{corollary}
Let $G$ be a semidirect product $H\ltimes F$, where $H$ is dual surjunctive and $F$ is a finitely generated residually finite group. Then $G$ is dual surjunctive.
\end{corollary}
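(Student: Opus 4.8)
The plan is to follow the strategy of Arzhantseva and Gal: use residual finiteness of $F$ to realize $G = H \ltimes F$ as a subgroup of an ultraproduct of semidirect products of $H$ with \emph{finite} groups, and then handle that finite case by reducing to dual surjunctivity of $H$ itself.

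First I would record the relevant group theory. Since $F$ is finitely generated it has only finitely many subgroups of index at most $n$, so $F_n := \bigcap\{K \leq F : [F:K]\leq n\}$ is a characteristic subgroup of $F$ of finite index; as $H$ acts on $F$ by automorphisms, each $F_n$ is therefore normal in $G$, and $G/F_n$ is canonically $H \ltimes (F/F_n)$, the semidirect product of $H$ with a finite group. Residual finiteness gives $\bigcap_n F_n = 1$, and since the $F_n$ are decreasing, for any nonprincipal ultrafilter $\U$ the diagonal homomorphism $G \to \prod_{\U} G/F_n$ is injective (a nontrivial $g$ lies outside $F_n$ for all large $n$). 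By the subgroup lemma for dual surjunctive groups and by Theorem~\ref{thm:dualsurjultraprod}, it now suffices to prove: if $H$ is dual surjunctive and $Q$ is a finite group, then $H \ltimes Q$ is dual surjunctive.

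For this, put $G_0 = H \ltimes Q$, so that $H \leq G_0$ with index $|Q|$. Choosing coset representatives identifies $G_0$, as a left $H$-set, with a disjoint union of $|Q|$ free copies of $H$; hence $A^{G_0}$ is $H$-equivariantly homeomorphic to the full $H$-shift $\tilde A^{H}$ over the finite alphabet $\tilde A = A^{Q}$, and since $Q$ is finite a subset of $G_0$ is finite exactly when its projection to $H$ is, so this homeomorphism also matches the almost-equality relations on the two sides. A post-surjective CA $T\colon A^{G_0}\to A^{G_0}$ is in particular continuous and $H$-equivariant, hence corresponds under this identification to a cellular automaton $\tilde T\colon \tilde A^{H}\to\tilde A^{H}$; post-surjectivity, being stated solely in terms of $T$ and almost-equality, transfers verbatim, so $\tilde T$ is post-surjective. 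Dual surjunctivity of $H$ makes $\tilde T$ pre-injective, hence $T$ pre-injective, and Theorem~\ref{thm:isom} then shows $T$ is an isomorphism. Thus $G_0$, and therefore $G$, is dual surjunctive.

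The group-theoretic bookkeeping is routine; the one point that needs care is the last paragraph's dictionary between the $G_0$-picture and the $H$-picture, namely that restricting the shift action to the finite-index subgroup $H$ yields precisely the full $H$-shift over $A^{Q}$ and that post-surjectivity and pre-injectivity are invariant under the resulting conjugacy. This is also where finite generation of $F$ enters: it is what makes the $F_n$ of finite index, so that the approximating quotients $H \ltimes (F/F_n)$ are extensions of $H$ by a genuinely finite group rather than by an arbitrary residually finite one.
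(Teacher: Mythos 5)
Your argument is correct, and it follows the same Arzhantseva--Gal strategy as the paper, but with a different division of labour: the paper simply invokes \cite[Theorem 1]{ArG} as a black box and then verifies its two hypotheses, namely that fully residually dual surjunctive groups are dual surjunctive (Corollary~\ref{cor:fullyresiduallyDS}) and that semidirect extensions of dual surjunctive groups by finite groups are dual surjunctive (via the same finite-index identification $A^{G_0}\cong (A^{Q})^{H}$ that you use), whereas you unpack the group-theoretic content of that theorem yourself. Concretely, you replace the appeal to \cite{ArG} and to Corollary~\ref{cor:fullyresiduallyDS} by the characteristic finite-index subgroups $F_n=\bigcap\{K\le F: [F:K]\le n\}$, the induced embedding of $G=H\ltimes F$ into the ultraproduct $\prod_{\U} H\ltimes (F/F_n)$, and the paper's own closure results (the subgroup lemma and Theorem~\ref{thm:dualsurjultraprod}); this is sound (finite generation gives $[F:F_n]<\infty$, characteristicity gives $F_n\lhd G$, residual finiteness gives injectivity of the diagonal map for a nonprincipal ultrafilter), and the finite case $H\ltimes Q$ is then handled exactly as in the paper's proof, with the correct observation that the coset identification preserves almost equality, so post-surjectivity and pre-injectivity transfer. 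What your route buys is a proof that is self-contained within the paper's results, at the cost of redoing the residually-finite bookkeeping that \cite{ArG} packages; what the paper's route buys is brevity and the explicit isolation of the two closure properties ((fully residually DS)$\Rightarrow$DS and (virtually DS)$\Rightarrow$DS) as reusable statements.
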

\begin{proof}
By \cite[Theorem 1]{ArG}, it suffices to check that
\begin{enumerate}
    \item \label{it1} fully residually dual surjunctive groups are dual surjunctive,
    \item \label{it2} semidirect extensions of dual surjunctive groups by finite groups are dual surjunctive.
\end{enumerate}
\eqref{it1} has been proved in Corollary~\ref{cor:fullyresiduallyDS} and in order to prove \eqref{it2} it suffices to show that virtually dual surjunctive groups are dual surjunctive. We proceed as in \cite[Lemma 6]{ArG}. Let $H\leq G$ be such that $|G\colon H|<\infty$ and $H$ is dual surjunctive. Let $T\colon A^G\rightarrow A^G$ be post-surjective. $A^G$ as an $H$-shift is isomorphic to the shift $(A^{H\backslash G})^H$, and it is easy to check that the induced $H$-equivariant map $T'\colon (A^{H\backslash G})^H\rightarrow (A^{H\backslash G})^H$ is post-surjective. Therefore it is injective and so is $T$.
\end{proof}

\begin{lemma}
Let $G$ be a group and $N\lhd G$ be a normal subgroup which is finitely generated and residually finite. Assume that $G/N$ is (dual) surjunctive. Then $G/Z(N)$ is (dual) surjunctive as well.
\end{lemma}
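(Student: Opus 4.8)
The plan is to reduce the statement about $G/Z(N)$ to the already-established closure of the (dual) surjunctive class under subgroups and under the operations appearing in the Arzhantseva--Gal machinery, via a central-extension argument. First I would observe that $N/Z(N)$ embeds into $\aut(N)$, and moreover, since $N$ is finitely generated, $\aut(N)$ is residually finite (a classical theorem of Baumslag); hence $N/Z(N)$ is itself finitely generated and residually finite. So $G/Z(N)$ is an extension $1 \to N/Z(N) \to G/Z(N) \to G/N \to 1$ of the (dual) surjunctive group $G/N$ by the finitely generated residually finite group $N/Z(N)$. If this extension were split, we would be done immediately by the previously quoted corollary (the $H \ltimes F$ statement, with $H = G/N$ and $F = N/Z(N)$). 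The point of the lemma is to handle the non-split case.

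The key step is therefore to upgrade the split-extension result to an arbitrary extension of a (dual) surjunctive group by a finitely generated residually finite group. Here I would follow the Arzhantseva--Gal strategy internally: using residual finiteness of $N/Z(N)$, one produces a descending chain of finite-index normal subgroups $(L_k)$ of $G/Z(N)$, contained in $N/Z(N)$, with $\bigcap_k L_k = 1$; the quotients $(G/Z(N))/L_k$ are then extensions of the (dual) surjunctive group $G/N$ by the \emph{finite} group $(N/Z(N))/L_k$, hence (dual) surjunctive by item~\eqref{it2} of the preceding corollary (virtually (dual) surjunctive groups are (dual) surjunctive). This exhibits $G/Z(N)$ as fully residually (dual) surjunctive, and then Corollary~\ref{cor:fullyresiduallyDS} (for the dual case) respectively the analogous classical fact for the surjunctive case finishes the argument. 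Concretely, for a finitely generated $G/Z(N)$ one invokes closedness in the space of marked groups (Theorem~\ref{thm:closedinmarkedgrps}); the general case follows because (dual) surjunctivity is a local property — it is inherited from the finitely generated subgroups via the subgroup-closure lemma together with a direct-limit/compactness argument on cellular automata, whose memory sets are finite.

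I expect the main obstacle to be precisely the verification that $N/Z(N)$ is finitely generated and residually finite, since this is what licenses the whole reduction: finite generation of $N/Z(N)$ is clear as a quotient of $N$, but residual finiteness is not automatic for quotients and genuinely requires the input that $\aut(N)$ is residually finite when $N$ is finitely generated (Baumslag's theorem), applied to the embedding $N/Z(N) \hookrightarrow \aut(N)$. A secondary point requiring care is the passage from the split-extension corollary to arbitrary extensions: one must check that the finite quotients $(G/Z(N))/L_k$ really are \emph{virtually} (dual) surjunctive with the (dual) surjunctive subgroup being (an isomorphic copy of) $G/N$ — this uses that $L_k \cap N/Z(N)$ has finite index in $N/Z(N)$, so that the preimage of $G/N$ has finite index, which is exactly the situation covered by item~\eqref{it2} above. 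Once these two ingredients are in place the remaining steps are routine.
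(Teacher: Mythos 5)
Your reduction breaks down at the step where you declare the quotients $(G/Z(N))/L_k$ to be (dual) surjunctive ``by item (2)''. Item (2) --- equivalently, the statement that virtually (dual) surjunctive groups are (dual) surjunctive --- applies to groups containing a (dual) surjunctive subgroup of \emph{finite index}, i.e.\ to the split situation $H\ltimes F_0$ with $F_0$ finite, where $H$ sits inside with finite index. Your groups $(G/Z(N))/L_k$ sit in an extension the other way around: they have a finite \emph{normal kernel} $(N/Z(N))/L_k$ with quotient $G/N$, and this extension need not split, so no finite-index (dual) surjunctive subgroup is in sight. ``Finite-by-(dual) surjunctive implies (dual) surjunctive'' is not proved in the paper and does not follow formally from the results you quote: if you attempt the standard reduction (map $(G/Z(N))/L_k$ to $G/N\times\aut(K_k)$ with $K_k=(N/Z(N))/L_k$ via the conjugation action on the finite kernel), the kernel of that map is a finite \emph{central} subgroup, and you are left with precisely the problem of central extensions of (dual) surjunctive groups by finite groups --- which is exactly the difficulty the lemma is formulated to avoid by passing to $G/Z(N)$ rather than $G$. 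A concrete instance: if $N$ is the discrete Heisenberg group, then $N/Z(N)\cong\Int^2$ and your quotients are (generally non-split) extensions of $G/N$ by finite abelian groups, which nothing in the paper covers. (Your preliminary observations are fine: $N/Z(N)\hookrightarrow\aut(N)$ is residually finite by Baumslag's theorem --- note this uses residual finiteness of $N$, not just finite generation --- and the $L_k$ can indeed be taken characteristic in $N/Z(N)$, hence normal in $G/Z(N)$; the locality claim at the very end is also not justified in the paper, though a similar reduction is already implicit in Corollary~\ref{cor:fullyresiduallyDS}.)

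The paper's proof avoids the extension problem altogether: the conjugation action yields $F\colon G\to G/N\times\aut(N)$, $F(g)=(gN,i_g)$, whose kernel is exactly $Z(N)$ (an element of the kernel lies in $N$ and centralizes $N$). Hence $G/Z(N)$ embeds into $G/N\times\aut(N)$, where $\aut(N)$ is residually finite by Baumslag's theorem, and one concludes from the semidirect-product corollary (a direct product being a special case) together with closure under subgroups; no non-split extension ever has to be handled. To salvage your route you would need the genuinely stronger closure property that finite-by-(dual) surjunctive (or at least finite-central-by-(dual) surjunctive) groups are (dual) surjunctive, which is not available.
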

\begin{proof}
The group $G$ acts by conjugation  $G\ni g\mapsto i_g\in\aut(N)$ on $N$. Consider \[F\colon G\to G/N\times\aut(N),\ \  F(g)=\left(g/N,i_g\right).\] 
That gives an embedding of $G/Z(N)$ into $G/N\times\aut(N)$ wich is (dual) surjunctive, as if $N$ is a finitely generated residually finite group, then $\aut(N)$ is residually finite.
\end{proof}

\begin{conjecture}
If $G$ is (dual) surjunctive then free product $\Int * G$ is (dual) surjuncive as well.
\end{conjecture}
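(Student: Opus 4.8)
The plan is to reduce the conjecture, for the surjunctive and dual surjunctive cases simultaneously, to the (currently open) statement that a free product of finitely many (dual) surjunctive groups is again (dual) surjunctive, using residual finiteness in the $\Int$-coordinate together with the closure properties established above.

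\emph{Step 1: reduce to free products with finite cyclic factors.} I claim that $\Int*G$ is fully residually $\{(\Int/n)*G : n\ge 2\}$. Every nontrivial $w\in\Int*G$ has a reduced syllable form $w=g_0t^{a_1}g_1\cdots t^{a_k}g_k$ with $a_1,\dots,a_k\neq 0$ and $g_1,\dots,g_{k-1}\neq 1$; if $n>|a_i|$ for all $i$, then the exponent-reduction homomorphism $\Int*G\to(\Int/n)*G$ sends $w$ to a word whose cyclic syllables remain nontrivial, hence to a nontrivial element. Given a finite set of nontrivial elements one simply takes $n$ exceeding every exponent occurring in their reduced forms. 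Therefore, once each $(\Int/n)*G$ is known to be (dual) surjunctive, Corollary~\ref{cor:fullyresiduallyDS} — and, for the surjunctive case, its analogue, which follows the same way from the closedness of surjunctive groups in $X_S$ — yields that $\Int*G$ is (dual) surjunctive.

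\emph{Step 2: reduce $(\Int/n)*G$ to free products of copies of $G$.} The kernel $K$ of the retraction $(\Int/n)*G\to\Int/n$ is normal of index $n$, and by the Kurosh subgroup theorem $K$ is a free product of conjugates of $G$ together with a free group of finite rank (the conjugates of $\Int/n$ contribute nothing, each mapping isomorphically onto $\Int/n$). Since free groups are sofic, hence (dual) surjunctive, and since (dual) surjunctivity passes to subgroups, $K$ would be (dual) surjunctive provided a free product of finitely many (dual) surjunctive groups is; then $(\Int/n)*G$, being virtually (dual) surjunctive, is (dual) surjunctive — the implication ``virtually (dual) surjunctive $\Rightarrow$ (dual) surjunctive'' being exactly what is established in the course of the corollary following Theorem~\ref{thm:closedinmarkedgrps}. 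Combining with Step 1, the conjecture is reduced to: \emph{a free product of finitely many (dual) surjunctive groups is (dual) surjunctive} (which in fact implies it at once, as $\Int$ is (dual) surjunctive).

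\emph{The main obstacle.} I expect the target of Step 2 to be where the genuine difficulty lies, and it is open. Unlike soficity — whose closure under free products, even amalgamated over finite subgroups, comes from gluing sofic approximations — (dual) surjunctivity has no known local or approximate certificate one could splice along the Bass–Serre tree of a free product. Concretely, decomposing $\Int*G$ into right cosets of $G$ exhibits $A^{\Int*G}$ as a full shift over $G$ with the \emph{infinite} alphabet $A^{G\backslash(\Int*G)}$, and the naive attempt to transport a failure of pre-injectivity (resp. of surjectivity) over $\Int*G$ to one over $G$ founders precisely on controlling that alphabet. A more hands-on route is to exploit that the edge groups of the Bass–Serre decomposition of $\Int*G$ are trivial, so that configurations on distinct branches of the tree influence the image under a cellular automaton only through a bounded neighbourhood of the branch point; one would then localize the given automaton to finitely many cells of $A^G$, apply (dual) surjunctivity of $G$ there, and use compactness to absorb the rest of the tree. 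Turning such a localization into a bona fide finite-alphabet cellular automaton on $A^G$ is the crux, and appears to need an idea beyond the present techniques. The centre and semidirect-product lemmas above do not obviously help either, since for infinite $G$ the group $\Int*G$ has no nontrivial finitely generated normal subgroup of infinite index.
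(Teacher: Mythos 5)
The statement you are asked to prove is stated in the paper as an open conjecture; the paper itself gives no proof, so the only question is whether your argument closes it, and it does not. Your two reduction steps are essentially sound: the exponent-reduction maps $\Int * G\to(\Int/n)*G$ do exhibit $\Int*G$ as fully residually $\{(\Int/n)*G\}_{n\ge 2}$ (normal forms survive once $n$ exceeds all exponents), so Corollary~\ref{cor:fullyresiduallyDS} (and the known closedness of surjunctive groups in $X_S$ for the surjunctive half) would apply; and the Kurosh decomposition of the kernel of $(\Int/n)*G\to\Int/n$ as a free product of $n$ conjugates of $G$ (and possibly a free group), together with the ``virtually (dual) surjunctive $\Rightarrow$ (dual) surjunctive'' argument from the corollary following Theorem~\ref{thm:closedinmarkedgrps}, is correct.

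The genuine gap is the one you name yourself: everything is conditional on the statement that a free product of finitely many (dual) surjunctive groups is (dual) surjunctive, and that statement is open. Worse, it is not a reduction in difficulty: your target class $G^{*n}*F_k$ contains $\Int*G$ as a subgroup (for $n,k\ge 1$), and (dual) surjunctivity passes to subgroups, so the statement you reduce to is at least as strong as the conjecture itself — you have traded one $\Int$ factor for several copies of $G$ plus a free factor. Thus the proposal is a correct conditional bookkeeping exercise built on the paper's closure properties, but it supplies no new idea at the point where the conjecture is actually hard (splicing (dual) surjunctivity certificates along the Bass--Serre tree, or equivalently controlling the infinite-alphabet coset shift $A^{G\backslash(\Int*G)}$), and therefore does not constitute a proof of the conjecture.
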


One of the basic notion in model theory is that of an \emph{elementary equivalence}. Two structures $A$ and $B$ in a language $L$ are \emph{elementary equivalent} if they satisfy the same first order $L$-sentences, that is a theorem expressible in $L$ is true in $A$ if and only if it is true in $B$.

\begin{corollary}\label{cor:elementaryeqDS}
Suppose that $A$ and $B$ are elementary equivalent groups in a group theory language $\{\cdot\}$ and $A$ is dual surjunctive. Then $B$ is also dual surjunctive.
\end{corollary}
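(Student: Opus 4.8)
The plan is to reduce the statement to Theorem~\ref{thm:dualsurjultraprod} by means of the Keisler--Shelah isomorphism theorem, which says that two structures in the same first-order language are elementarily equivalent precisely when they admit isomorphic ultrapowers over a common index set and ultrafilter. First I would apply this to the elementarily equivalent groups $A$ and $B$, obtaining an index set $I$, an ultrafilter $\U$ on $I$, and a group isomorphism $A^I/\U \cong B^I/\U$.

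Next I would observe that $A^I/\U$ is the ultraproduct of the constant family $(A)_{i\in I}$; since each factor is dual surjunctive by hypothesis, Theorem~\ref{thm:dualsurjultraprod} gives that $A^I/\U$ is dual surjunctive, and hence so is the isomorphic group $B^I/\U$. Finally I would use that the diagonal map $b \mapsto (b)_{i\in I}/\U$ is a monomorphism $B \hookrightarrow B^I/\U$, exhibiting $B$ as a subgroup of a dual surjunctive group; by the lemma preceding Theorem~\ref{thm:dualsurjultraprod} (dual surjunctivity passes to subgroups), $B$ is dual surjunctive.

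The hard part is really just invoking the right black box: I expect the only nontrivial ingredient to be the Keisler--Shelah theorem, while the remaining steps are bookkeeping. It is worth flagging why this model-theoretic detour seems unavoidable: dual surjunctivity is a countable conjunction of assertions, one for each finite alphabet and each finitely supported local rule, but each such assertion still quantifies over the infinite configuration space $A^G$ (through pre-injectivity and post-surjectivity), so there is no evident way to phrase it as a first-order sentence in the language of groups, and hence no way to transfer it directly along elementary equivalence without passing through ultrapowers.
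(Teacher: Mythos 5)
Your argument is correct, and it reduces the corollary to the same two ingredients the paper uses (closure under ultraproducts, Theorem~\ref{thm:dualsurjultraprod}, plus closure under subgroups via the diagonal embedding), but it reaches the ultrapower through a heavier black box than necessary. The paper's proof is one line: elementary equivalence implies in particular that $B$ satisfies the universal theory of $A$, and any group satisfying the universal theory of $A$ embeds into an ultrapower of $A$ (a routine compactness/\L o\'{s} argument), so $B$ sits inside a dual surjunctive group. You instead invoke the Keisler--Shelah isomorphism theorem to get $A^I/\U\cong B^I/\U$, then embed $B$ diagonally into $B^I/\U$. This is valid, but Keisler--Shelah is a substantially deeper theorem than what is needed, and it forces a larger index set: without extra set-theoretic hypotheses the isomorphic ultrapowers it produces live over uncountable index sets, whereas Theorem~\ref{thm:dualsurjultraprod} is stated for ultrafilters on $\Nat$. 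To make your route airtight you should note that the proof of that theorem goes through verbatim for ultrafilters on arbitrary index sets (it does --- nothing in the argument uses countability of the index set); to be fair, the paper's own proof needs the same remark when $B$ is large, since an ultrapower of $A$ over $\Nat$ has cardinality at most $|A|^{\aleph_0}$ and so cannot always contain $B$. In short: same skeleton, but your choice of key lemma (Keisler--Shelah versus the universal-theory embedding) is more expensive and buys nothing extra here; your closing remark about why dual surjunctivity is not visibly first-order, and hence why some ultrapower detour is needed, is accurate.
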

\begin{proof}
If $A$ and $B$ have same universal theory, then $B$ embeds into some ultrapower of $A$ and the conclusion follows from Theorem \ref{thm:dualsurjultraprod}.
\end{proof}

\section{Direct finiteness conjecture}

In late 1960s, I. Kaplansky proved \cite{kap} that if $K$ is a field of characteristic 0 and $G$ is any group, then the group ring $K[G]$ is \emph{directly finite}. Let us recall that a ring $R$ with $1$ is called \emph{directly finite} if for any $x,y\in R$, the condition $xy=1$ implies $yx=1$. \emph{Kaplansky's Direct Finiteness Conjecture} says that $K[G]$ is directly finite for any field $K$ and any group $G$.

This conjecture attracted a lot of attention recently in a more general case when $K$ is a division ring. See \cite{ara} for the proof when $G$ is a residually amenable group and also \cite{jus} for the computational approach in characteristic 0. The most general result on Kaplansky conjecture was established in \cite{sofic} (see also \cite[Corollary 1.4]{csc07}), when $G$ is a sofic group. All sofic groups are surjunctive and dual surjunctive. The main idea of the proof in \cite{sofic} is to construct an embedding of $K[G]$ into simple continuous von Neumann regular ring. 

We give below an elementary proof of this conjecture when $G$ is surjunctive or dual surjunctive group and $K$ is an arbitrary field of positive characteristic. Our proof covers the case of sofic groups.

\begin{theorem} \label{thm:kap}
Surjunctive and dual surjunctive groups satisfy Kaplansky's Direct Finiteness conjecture for fields of positive characteristic.
\end{theorem}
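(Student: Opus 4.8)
The plan is to reduce Kaplansky's direct finiteness for $K[G]$, $\chr K = p > 0$, to the (dual) surjunctivity of $G$ by manufacturing an injective (resp. post-surjective) cellular automaton out of a hypothetical non-directly-finite pair $x,y\in K[G]$ with $xy=1 \neq yx$. First I would fix a finite subfield, or rather reduce to the case $K = \F_{p^k}$ for some finite extension: if $xy = 1$ in $K[G]$, then $x,y$ have finitely many coefficients, all lying in a finitely generated subring of $K$, and since $\chr K = p$ this subring is a finitely generated $\F_p$-algebra; passing to a maximal ideal gives a finite field $\F_q$ and a ring homomorphism under which the relation $xy=1$, $yx \neq 1$ is preserved (one must choose the maximal ideal to avoid killing the nonzero element $yx - 1$, which is possible since finitely generated $\F_p$-algebras are Jacobson rings, so the intersection of maximal ideals is nil, hence the nonzero element survives in some quotient). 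So without loss of generality $K = \F_q$ is finite.

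Now take $A = \F_q$ as the finite alphabet, so $A^G = \F_q^G$ is the full shift, which is also naturally an $\F_q$-vector space and a module over $K[G]$ (acting by convolution on the right, say: $z \in K[G]$ acts on $f \in \F_q^G$ via the linear map $R_z$ determined by the support of $z$). The key observation is that left multiplication by an element $z = \sum_{g} z_g g \in K[G]$ with finite support $F = \supp(z)$ induces a cellular automaton $T_z\colon \F_q^G \to \F_q^G$ with memory set $F^{-1}$: indeed $(T_z f)(x) = \sum_{g \in F} z_g f(g^{-1}x)$ depends only on $f \restriction xF^{-1}$, it is plainly $\F_q$-linear, and $G$-equivariance follows from associativity in $K[G]$. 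Moreover $T_z \circ T_w = T_{wz}$ (or $T_{zw}$, depending on the side conventions one fixes), so $xy = 1$ forces $T_y \circ T_x = \id$, whence $T_x$ is injective; and if $G$ is surjunctive this makes $T_x$ surjective, hence bijective, hence $T_x \circ T_y = \id$ as well (a left inverse equal to a two-sided inverse), which on pulling back to $K[G]$ gives $yx = 1$, a contradiction. For the dual surjunctive case, one instead checks that $T_x$ is post-surjective: given $T_x(f) \sim h$, set $h' = h - T_x(f)$, which has finite support, put $f' = f + T_y(h')$ — this is $\sim f$ since $T_y$ has finite memory so maps finitely-supported configurations to finitely-supported ones — and compute $T_x(f') = T_x(f) + T_x T_y(h') = T_x(f) + h' = h$ using $T_x T_y = \id$, which holds because $xy = 1$; so $T_x$ is post-surjective, hence pre-injective, hence (being also injective as a linear map, or by Theorem~\ref{thm:isom}) an isomorphism, and again $T_x T_y = \id$ on all of $\F_q^G$ gives $xy$... wait, gives that $T_y$ is a genuine two-sided inverse, so $yx = 1$.

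The main obstacle I anticipate is pinning down the side conventions so that the arithmetic $T_z T_w = T_{wz}$ and the direction of the inequality $yx \neq 1$ line up correctly with injectivity versus surjectivity of the CA — this is a place where a sign or side error silently breaks the argument, and it needs care about whether $K[G]$ acts on the left or right of $\F_q^G$ and how that interacts with the left $G$-action defining the shift. A secondary technical point is the reduction to a finite field: one must verify that the element $yx - 1 \neq 0$ can be kept nonzero after reducing modulo a maximal ideal of the relevant finitely generated $\F_p$-algebra, for which the Jacobson property (Nullstellensatz-type argument) is exactly what is needed, and also that $G$ can be assumed finitely generated by replacing it with the subgroup generated by the supports of $x$ and $y$ (legitimate since surjunctivity and dual surjunctivity pass to subgroups, by \cite[Lemma 1.1]{weiss} and the Lemma proved above). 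Everything else — linearity of $T_z$, identification of the memory set, the finite-support bookkeeping in the post-surjectivity check — is routine.
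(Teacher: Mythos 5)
Your proposal follows the paper's proof: reduce to a finite base field by passing to the finitely generated subring $R\subseteq K$ generated by the coefficients of $x$ and $y$ and quotienting by a maximal ideal chosen so that a nonzero coefficient of $yx-1$ survives (the residue field of a finitely generated $\F_p$-algebra is finite), and then turn convolution by group-ring elements into linear cellular automata on $K^G$, playing the one-sided inverse coming from $xy=1$ against surjunctivity, respectively dual surjunctivity. The reduction step and the surjunctive branch are correct as you state them; restricting to finitely generated $G$ is harmless but not needed.

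The one genuine defect is exactly the side issue you flag but do not resolve, and in the dual-surjunctive branch it makes the argument circular as written. First, the map you display, $(T_zf)(x)=\sum_g z_g f(g^{-1}x)$, is left convolution $z*f$ and is in general not $G$-equivariant for the left shift $g\cdot f(x)=f(g^{-1}x)$ (it commutes with right translations instead); the paper uses right convolution $T_a(f)=f*a$, $(f*a)(x)=\sum_y f(y)a(y^{-1}x)$, which is equivariant and satisfies $T_b\circ T_a=T_{ab}$. With this consistent convention and $xy=1$ one gets $T_y\circ T_x=T_{xy}=\id$, so $T_x$ is injective (your surjunctive branch is then fine), while it is $T_y$, not $T_x$, that must be shown post-surjective. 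Your verification invokes ``$T_xT_y=\id$, which holds because $xy=1$''; but under the convention that makes the rest of your argument work, $T_x\circ T_y=T_{yx}$, and $T_{yx}=\id$ is precisely the conclusion being proven. The repair is the paper's computation with the roles swapped: given $T_y(f)\sim h$, put $h'=h-T_y(f)$ and $f'=f+T_x(h')\sim f$; then $T_y(f')=T_y(f)+T_yT_x(h')=h$, using only the known identity $T_yT_x=\id$. Dual surjunctivity then makes $T_y$ pre-injective, hence an isomorphism by Theorem~\ref{thm:isom}, so $T_x=T_y^{-1}$ and $T_{yx}=T_x\circ T_y=\id$; evaluating at the element $1\in K[G]\subseteq K^G$ (or using density of $K[G]$ in $K^G$) gives $yx=1$. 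With these corrections your argument coincides with the paper's.
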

\begin{proof}
Let us first assume that $K$ is a finite field. Observe that the group ring $K[G]$ is dense in $K^G$. Every element $a\in K[G]$ induces a continuous linear map $T_a$
\begin{equation}
T_a\colon K^G \to K^G,\ \  T_a(f) = f\ast a, \text{ where }(f\ast a)(x)=\sum_{y\in G}f(y)a\left(y^{-1}x\right)
\end{equation} 
which is $G$-equivariant, that is $T_a(g\cdot f)(x) = g\cdot T_a(f)(x)$. Moreover $T_b\circ T_a = T_{a b}$, for $a,b\in K[G]$. 

Suppose now $a b=1$ for some $a,b\in K[G]$. Then $T_b\circ T_a=T_{a b} = \id$ is the identity on $K[G]$ which is dense in $K^G$. Therefore it is the identity on $K^G$.

\begin{claim}
The map $T_b$ is post-surjective and $T_a$ is injective.
\end{claim}
\begin{proof}[Proof of Claim]
The injectivity of $T_a$ is clear, since $T_b\circ T_a=T_{a b}=\id$. We prove that $T_b$ is post-surjective. Suppose $c=T_b(e)$ and $c\sim d$. Then $T_a(c)\sim T_a(d)$, so $e\sim  e+T_a(d)-T_a(c)=:e'$ and
\begin{align*}
T_b(e') &= T_b\circ T_a(d)+T_b(e-T_a(c))=  T_{ab}(d) + T_b(e)-T_{ab}(c)\\ &=T_{ab}(d)+T_b(e)-T_{bab}(e)=d+T_b(e)-T_b(e)=d.
\end{align*}
Hence $T_b$ is post-surjective.
\end{proof}
Since $G$ is dual surjunctive (surjunctive respectively), $T_b$ is a bijective ($T_a$ is a bijective respectively) continuous map. Thus $T_a$ is the inverse of $T_b$, so $T_a\circ T_b = T_{ba}$ is the identity on $K^G$ as well. Hence $ba=1$.

Suppose $K$ is an arbitrary field of positive characteristic and $ab = 1$ but $ba\neq 1$ for some $a,b\in K[G]$. 
Let $R=\langle k_i, l_i : 1\leq i\leq n \rangle$ be a subring of $K$ generated as a subring by the coefficients of $a$ and $b$. Hence $R$ is a finitely generated domain. 

Since $ba\neq 1$, we may write \[ba = \sum_{j=1}^m p_j g_j\] where $0\neq p_j\in K$ and elements $g_j\in G$ are pairwise distinct. We may assume that $p_1\neq 0$ and $g_1\neq 1_G$, or $p_1\neq 1$ and $g_1=1_G$. There exists a maximal ideal $I\lhd R$ (see e.g. \cite[Lemma 3.2 (iv)]{nica}) such that $p_1\not\in I$. Then $F = R/I$ is a field, which is a finitely generated as a ring. Therefore $F$ is a finite field, as we are in positive characteristic. Let $f\colon R[G]\to F[G]$ be a quotient homomorphism. Then $1=f(ab)=f(a)f(b)$, but $f(ba)=f(b)f(a)\neq 1$, as $f(p_1 g_1) = f(p_1) g_1 \neq 0$. This finishes the proof.
\end{proof}

We prove that the class of groups satisfying Kaplansky Conjecture is closed under taking ultraproducts.

\begin{proposition}
Suppose $R$ is a ring and $(G_i)_{i\in I}$ is a family of groups such that $R[G_i]$ is directly finite for $i\in I$. Then $R[G]$ is directly finite, where $G = \prod_{i\in I}G_i/\U$ is an ultraproduct of $(G_i)_{i\in I}$.

In particular the class of groups satisfying Kaplansky's Direct Finiteness Conjecture is closed under taking ultraproducts.
\end{proposition}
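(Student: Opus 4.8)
The plan is to prove direct finiteness of $R[G]$ directly, by a finite-support ultraproduct argument, rather than by trying to quote \L o\'s's theorem (for general $R$ the statement ``$R[G]$ is directly finite'' is not first order in the language of groups). Suppose $a,b\in R[G]$ satisfy $ab=1$. Write $a=\sum_{s\in\supp(a)}a_s\,s$ and $b=\sum_{t\in\supp(b)}b_t\,t$ with $a_s,b_t\in R\setminus\{0\}$ and $\supp(a),\supp(b)$ finite, and let $S\subseteq G$ be the finite set consisting of $1_G$, all elements of $\supp(a)\cup\supp(b)$, and all products $st$ and $ts$ with $s\in\supp(a)$, $t\in\supp(b)$. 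For each $g\in S$ fix a representing sequence $(g^{(i)})_{i\in I}$, with $1_G$ represented by the constant sequence $(1_{G_i})_i$.

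The first step is to isolate a $\U$-large set of ``good'' indices. Since $S$ is finite, for $\U$-many $i$ the map $g\mapsto g^{(i)}$ is injective on $S$ (for each pair $g\ne g'$ in $S$ one has $\{i:g^{(i)}\ne g'^{(i)}\}\in\U$, and there are only finitely many pairs, using that $\U$ is an ultrafilter), and moreover $g^{(i)}h^{(i)}=(gh)^{(i)}$ whenever $g,h,gh\in S$ (again a conjunction of finitely many conditions each holding $\U$-often). Fix one such index $i$. Then $a^{(i)}:=\sum_s a_s\,s^{(i)}$ and $b^{(i)}:=\sum_t b_t\,t^{(i)}$ lie in $R[G_i]$, and by the choice of $i$, $a^{(i)}b^{(i)}=\sum_{s,t}a_sb_t\,(st)^{(i)}=\sum_{k\in\supp(a)\supp(b)}\bigl(\sum_{st=k}a_sb_t\bigr)k^{(i)}$, where distinct $k$ give distinct $k^{(i)}$. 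Since $ab=1$ in $R[G]$ forces $\sum_{st=k}a_sb_t=\delta_{k,1_G}\cdot 1_R$, we get $a^{(i)}b^{(i)}=1$ in $R[G_i]$.

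Now invoke the hypothesis: $R[G_i]$ is directly finite, so $b^{(i)}a^{(i)}=1$ in $R[G_i]$. The last step is to read this back down in $R[G]$. Expanding exactly as before, $b^{(i)}a^{(i)}=\sum_{k\in\supp(b)\supp(a)}\bigl(\sum_{ts=k}b_ta_s\bigr)k^{(i)}$, with distinct $k\in S$ giving distinct $k^{(i)}$ by injectivity. Comparing with $1=(1_G)^{(i)}$ yields $\sum_{ts=k}b_ta_s=\delta_{k,1_G}\cdot 1_R$ for every $k\in\supp(b)\supp(a)$, which are precisely the coefficient equations asserting $ba=1$ in $R[G]$. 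Hence $R[G]$ is directly finite. The ``in particular'' clause then follows by specializing to $R=K$ and letting $K$ range over all fields.

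I do not expect a genuine obstacle; the only point needing care is the bookkeeping that lets the finite-support arithmetic of $R[G]$ descend to and ascend from the $R[G_i]$ uniformly over a $\U$-large index set --- concretely, enlarging the support to the finite set $S$ that is closed under the products which actually occur in $ab$ and $ba$, and using the \emph{same} good index $i$ for both directions. Since the coefficient ring $R$ is held fixed throughout, no condition on $R$ (such as finiteness) is needed.
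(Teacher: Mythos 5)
Your proof is correct and follows essentially the same route as the paper's: pass to representatives, isolate a $\U$-large set of indices where the finitely many relevant products and coincidences among support elements are faithfully reflected in $G_i$, conclude $a^{(i)}b^{(i)}=1$ there, apply direct finiteness of $R[G_i]$, and transfer the coefficient identities back to $R[G]$. Your closure of $S$ under both $st$ and $ts$ plays exactly the role of the paper's index sets $I_1,I_2$ (and handles the return direction $ba=1$ at the same good index slightly more explicitly than the paper does).
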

\begin{proof}
Suppose $x,y\in R[G]$ and $xy=1$. That is $x=\sum_{j=1}^n c_j g_j$, $y=\sum_{k=1}^m c'_k g'_k$, where $c_j,c'_k\in R$, $g_j=(g_{i,j})_{i\in I}/U\in G$, $g'_k=(g'_{i,k})_{i\in I}/U\in G$ for some $g_{i,j}, g'_{i,k}\in G_i$ and $g_s\neq g_t$, $g'_p\neq g'_q$ for $1\leq s\neq t\leq n$ and $1\leq p\neq q\leq m$.

Consider $x_i = \sum_{j=1}^n c_j g_{i,j}$ and $y_i=\sum_{k=1}^m c'_k g'_{i,k}$ ($x_i,y_i\in R[G_i]$). In order to finish the proof, it is enough to prove the following claim.

\begin{claim}
$xy=1$ if and only if $\{i\in I : x_iy_i=1\}\in\U$.
\end{claim}
\begin{proof} Let \[I_1 = \left\{i\in I : \forall_{1\leq j,j'\leq n}\ \forall_{1\leq k,k'\leq m}\ \  g_j\cdot g'_k = g_{j'}\cdot g'_{k'} \Leftrightarrow g_{i,j}\cdot g'_{i,k} = g_{i,j'}\cdot g'_{i,k'} \right\},\]
\[I_2 = \left\{i\in I : \forall_{1\leq j\leq n}\ \forall_{1\leq k\leq m}\ \  g_j\cdot g'_k = e_G \Leftrightarrow g_{i,j}\cdot g'_{i,k} = e_{G_i} \right\}.\]

Clearly $I_1,I_2\in\U$. Moreover for $i\in I_1\cap I_2$, the canonical from of $x_iy_i$ in $R[G_i]$ has same coefficients as the canonical form of $xy$ in $R[G]$ (because $x\cdot y = \sum\sum c_jc'_k g_j\cdot g_{k'}$). Therefore for $i\in I_1\cap I_2$, $xy=1$ if and only if $x_iy_i=1$.
\end{proof}
\end{proof}

\section{Metric direct finiteness conjecture}

This section is devoted to a \emph{metric version of the Kaplansky conjecture}, which gives the standard Kaplansky conjecture on the level of \emph{metric ultraproduct}.

Let us give a couple of definitions. A \emph{metric group} $(G,\|\cdot\|)$ is a group equipped with a \emph{bi-invariant} (also called \emph{conjugation-invariant}) norm $\|\cdot\|\colon G\to [0,\infty)$ satisfying $\|e\|=0$, $\|gh\|\leq \|g\| + \|h\|$, $\left\| g^{-1} \right\| = \|g\| = \|hgh^{-1}\|$, $\|g\|=0$ if and only if $g=e$. 

Throughout this section we fix a metric group $(G,\|\cdot\|)$, a commutative ring $R$ with 1 and a family $\G = (G_n,\|\cdot\|_n)_{n\in \N}$ of metric groups such that \[\sup\{\|\cdot\|_n : n\in\N\}<\infty.\] A \emph{metric ultraproduct} $\G^*_{\text{met}}$ of $\G$ with regard to $\U$ is defined as a quotient group: 
\[\G^*_{\text{met}} = \frac{\prod_{n\in \N}G_n}{\E}, \text{ where } \E = \left\{\left(g_n\right)_{n\in\N}\in \prod_{n\in \N}G_n : \lim_{n\to \U}\left\|g_n\right\|_n = 0 \right\}.\]
 $\G^*_{\text{met}}$ is also a metric group. It is  equipped with a bi-invariant norm: 
\begin{equation} \label{eq:norm}
\|\cdot\|\colon \G^*_{\text{met}}\to [0,\infty)\text{ defined by }\left\|(g_n)/\E\right\| = \lim_{n\to \U} \left\|g_n\right\|_n.
\end{equation}

Consider a group ring $R[G]$. There is a ring homomorphism $\varepsilon\colon R[G]\to R$ called the \emph{augmentation map}, defined as $\varepsilon(f) = \sum_{g\in G}f(g)$. The \emph{augmentation ideal} $\Delta^R(G)$ (or just $\Delta(G)$) is the kernel of $\varepsilon$. It is a free $R$ module with a basis $\{ g-1 : g\in G\setminus\{e\}\}$ \cite[Section 3.3]{gr}. 

Consider $f \in R[G]$. The \emph{length} $l(f)$ of $f$ is $n$ if $f = \sum_{i=1}^n \lambda_i g_i$, where $\lambda_i\neq 0$ and $g_i\neq g_j$, for $1\leq i\neq j\leq n$.

We need a pseudonorm $\|\cdot\|_S$ on $\Delta(G)$, defined below.

\begin{definition}\label{def:l1} Suppose $f=\sum_{i=1}^n \lambda_i g_i\in \Delta^R(G)$, where $\lambda_i\neq 0$ and $g_i\neq g_j$ for $i\neq j$. Define 
\begin{enumerate}
\item $\supp(f) =\{g_1,\ldots,g_n\}$.
\item $\|f\|_S = \min\left\{\sum_{i=1}^N \left\|u_i^{-1}w_i\right\| : f=\sum_{i=1}^N p_i(u_i-w_i),\; u_i, w_i\in\supp(f),\  p_i\in R\setminus\{0\}\right\}.$
Notice that, $\|\cdot\|_S$ is indeed defined with minimum instead of infimum, since there are only finitely many such decomposition's of $f$.
\end{enumerate}
\end{definition}
 
We have $\|f\|_S=0$ if and only if $f=0$ and $\|f_1+f_2\|_S\leq \|f_1\|_S+\|f_2\|_S$, but we do not need this inequality in arguments below.

We give a criterion for $K[\G^*_{\text{met}}]$ to satisfy Kaplansky Conjecture below in Theorem \ref{thm:kapp} below. Let us first give a couple of observations. There is a natural surjective homomorphism $\prod_{n\in\N}G_n \to \G^*_{\text{met}}$ of groups, which gives surjective homomorphism of rings and an isomorphism
\[\frac{K\left[\prod_{n\in\N}G_n \right]}{I} \overset{\cong}{\longrightarrow} K\left[\G^*_{\text{met}}\right],\] for a two-sided ideal $I\lhd K\left[\prod_{n\in\N}G_n \right]$ generated by $\left\{1-(g_n) : (g_n)\in \E\right\}$, by \cite[Corollary 3.2.8 p. 132, Proposition 3.3.4 p. 136]{gr}. Observe that $\|1-(g_n) \|_S=0$ in $K\left[\prod_{n\in\N}G_n \right]$, for every $(g_n)\in \E$. In fact $I=\left\{f\in \triangle\left(\prod_{n\in\N}G_n \right): \|f\|_S=0\right\}$.

\begin{theorem}\label{thm:kapp}
Fix a finite commutative unital ring $K$ and a family $\G$ of metric groups. The following facts are equivalent.
\begin{enumerate}
    \item $K\left[\G^*_{\text{met}}\right]$ satisfies Kaplansky conjecture.
    \item For $\U$-almost all $n\in\N$, for every $\varepsilon>0$ and every natural $N\in\N$ there is $\delta(\varepsilon,N)>0$ such that for every $a,b \in K[G_n]$, $l(a), l(b)\leq N$, 
    \[\text{if } ab-1\in \Delta^K(G_n) \text{ and } \|ab-1\|_S<\delta(\varepsilon,N),\text{ then }\|ba-1\|_S<\varepsilon,\]
    where $\|\cdot\|_S$ is defined on $K[G_n]$ as in \ref{def:l1}(2).
\end{enumerate}
\end{theorem}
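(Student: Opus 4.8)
The plan is to prove the equivalence by relating direct finiteness of $K[\G^*_{\text{met}}]$ to an ``approximate'' direct finiteness statement over the groups $G_n$, using the metric ultraproduct structure and the isomorphism $K[\prod_n G_n]/I \cong K[\G^*_{\text{met}}]$ recorded above, together with the description $I=\{f\in\triangle(\prod_n G_n):\|f\|_S=0\}$. The pseudonorm $\|\cdot\|_S$ is the central device: it descends to a genuine object on the quotient and it interacts well with $\lim_{n\to\U}$. First I would establish the key bridging lemma: for $f=\sum_{i=1}^k \lambda_i(g^{(i)}-h^{(i)})\in\triangle(\prod_n G_n)$ (with $g^{(i)},h^{(i)}$ represented by sequences $(g^{(i)}_n)_n,(h^{(i)}_n)_n$), the image $\bar f\in K[\G^*_{\text{met}}]$ satisfies $\bar f=0$ iff $\lim_{n\to\U}\|f_n\|_S=0$, where $f_n=\sum_i\lambda_i(g^{(i)}_n-h^{(i)}_n)$; more generally, for a fixed length bound $N$, an element of $K[\G^*_{\text{met}}]$ of length $\le N$ lifts to sequences $(a_n)_n$ with $l(a_n)\le N$, and ``$ab-1\in\triangle$, $ab-1=0$ in the ultraproduct'' translates via {\L}o{\'s}'s theorem plus the $\|\cdot\|_S=0$ characterization into ``$a_nb_n-1\in\triangle^K(G_n)$ and $\lim_{n\to\U}\|a_nb_n-1\|_S=0$.''

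Granting that bridge, I would argue $(2)\Rightarrow(1)$ as follows. Suppose $xy=1$ in $K[\G^*_{\text{met}}]$; lift $x,y$ to sequences $x_n,y_n\in K[G_n]$ of bounded length $N$ (possible since $K$ is finite, so only finitely many coefficient patterns occur, and {\L}o{\'s}'s theorem pins down which group-element coincidences hold $\U$-often). Then $x_ny_n-1\in\triangle^K(G_n)$ for $\U$-almost all $n$, and $\|x_ny_n-1\|_S\to 0$ along $\U$. Given $\varepsilon>0$, apply hypothesis $(2)$ with this $N$ and $\delta=\delta(\varepsilon,N)$: for $\U$-almost all $n$ we have $\|x_ny_n-1\|_S<\delta$, hence $\|y_nx_n-1\|_S<\varepsilon$. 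Since $\varepsilon$ was arbitrary, $\lim_{n\to\U}\|y_nx_n-1\|_S=0$, so by the bridging lemma $yx-1=0$ in $K[\G^*_{\text{met}}]$, i.e. $yx=1$. This gives direct finiteness of $K[\G^*_{\text{met}}]$.

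For $(1)\Rightarrow(2)$, I would argue by contraposition: if $(2)$ fails, then for a $\U$-large set of $n$ there exist $\varepsilon>0$, $N\in\N$, and for every $\delta>0$ a pair $a,b\in K[G_n]$ with $l(a),l(b)\le N$, $ab-1\in\triangle^K(G_n)$, $\|ab-1\|_S<\delta$, yet $\|ba-1\|_S\ge\varepsilon$. Taking $\delta=1/n$ and diagonalizing (again using finiteness of $K$ to keep lengths and coefficient data along a $\U$-set coherent), one assembles sequences $(a_n)_n,(b_n)_n$ whose ultraproduct images $x,y\in K[\G^*_{\text{met}}]$ satisfy $xy=1$ (since $\|a_nb_n-1\|_S\to 0$) but $yx\ne 1$ (since $\|b_na_n-1\|_S\ge\varepsilon$ forces $\lim_{n\to\U}\|b_na_n-1\|_S\ge\varepsilon>0$, so $yx-1\ne0$ by the bridging lemma). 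That contradicts $(1)$.

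The main obstacle, and the step deserving the most care, is the bridging lemma and the bookkeeping around lifting elements of $K[\G^*_{\text{met}}]$ to bounded-length elements of $K[G_n]$: one must check that a length-$N$ element of the ultraproduct group ring genuinely lifts to length-$\le N$ elements $\U$-often (distinct group elements in $\G^*_{\text{met}}$ need not have representatives that are distinct for \emph{all} $n$, but {\L}o{\'s}'s theorem gives distinctness for $\U$-almost all $n$, and collisions only shorten the length), and that the pseudonorm equality ``$\|\bar f\|=0 \iff \lim_{n\to\U}\|f_n\|_S=0$'' holds with the correct quantifier over decompositions --- here one uses that $\|f_n\|_S$ is a minimum over the finitely many ways of writing $f_n$ using differences of its own support elements, that the bi-invariant norms $\|\cdot\|_n$ are uniformly bounded, and the defining property $\E=\{(g_n)_n:\lim_{n\to\U}\|g_n\|_n=0\}$ of the metric ultraproduct. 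Once this dictionary is set up cleanly, both implications are routine $\varepsilon$--$\delta$ manipulations.
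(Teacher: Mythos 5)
Your proposal follows essentially the same route as the paper: your ``bridging lemma'' is precisely the paper's Lemma~\ref{lem:lim} (coherence of $\|\cdot\|_S$ with ultralimits, combined with definiteness of $\|\cdot\|_S$ on $\Delta^K(\G^*_{\text{met}})$), and both implications are carried out exactly as you describe --- bounded-length lifts using finiteness of $K$ for $(2)\Rightarrow(1)$, and the $\delta=1/n$ diagonalization for $(1)\Rightarrow(2)$. One cosmetic caveat: the coincidence bookkeeping should be attributed to definiteness of the norms $\|\cdot\|_n$ and the definition of $\E$ rather than to \L o\'{s}'s theorem (which governs the algebraic, not the metric, ultraproduct), but this does not change the substance of the argument.
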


We need a lemma below on the coherence of the norm $\|\cdot\|_S$ defined in \ref{def:l1} on $K[\G^*_{\text{met}}]$ and on each $K[G_n]$.

\begin{lemma}\label{lem:lim} Fix a finite ring $R$ and $a\in\Delta^R(\G^*_{\text{met}})$, $a =p_1\cdot \bar{g}_1 + \ldots + p_{N}\cdot \bar{g}_N$, for $p_i\in R\setminus\{0\}$, where $\bar{g}_i=\left(g_{i,n}\right)/\E\in \G^*_{\text{met}}$, for $g_{i,n}\in G_n$.
Define \[a_n = p_1\cdot g_{1,n} + \ldots + p_{N}\cdot g_{N,n}\in R[G_n].\] 
Then \[\|a\|_S = \lim_{n\to\U}\|a_n\|_S.\]
\end{lemma}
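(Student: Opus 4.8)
\textbf{Proof proposal for Lemma~\ref{lem:lim}.}

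The plan is to prove the two inequalities $\|a\|_S \le \lim_{n\to\U}\|a_n\|_S$ and $\|a\|_S \ge \lim_{n\to\U}\|a_n\|_S$ separately, using the fact that each $\|\cdot\|_S$ is a \emph{minimum} over a \emph{finite} set of decompositions into differences $p_i(u_i-w_i)$ with $u_i,w_i$ ranging over the (finitely many) elements of the support. The key bookkeeping point is that the support of $a$ in $\G^*_{\text{met}}$ and the support of $a_n$ in $R[G_n]$ are ``the same'' for $\U$-many $n$: for $\U$-many $n$ the elements $g_{1,n},\ldots,g_{N,n}$ are pairwise distinct exactly when $\bar g_1,\ldots,\bar g_N$ are (and more generally any equation $g_{i,n}^{-1}g_{j,n}=g_{k,n}^{-1}g_{\ell,n}$ holds for $\U$-many $n$ iff the corresponding equation holds in $\G^*_{\text{met}}$), so after discarding a $\U$-small set of indices we may assume $\supp(a_n)=\{g_{1,n},\ldots,g_{N,n}\}$ has exactly $N$ elements and that the combinatorial structure of the support matches that of $\supp(a)$.

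First I would prove $\|a\|_S \ge \lim_{n\to\U}\|a_n\|_S$. Fix an optimal decomposition $a=\sum_{i=1}^M p_i(u_i-w_i)$ with $u_i,w_i\in\supp(a)$, $p_i\in R\setminus\{0\}$ and $\|a\|_S=\sum_{i=1}^M\|u_i^{-1}w_i\|$. Write $u_i=\bar g_{\sigma(i)}$, $w_i=\bar g_{\tau(i)}$ and set $u_{i,n}=g_{\sigma(i),n}$, $w_{i,n}=g_{\tau(i),n}$. The identity $a=\sum p_i(u_i-w_i)$ is, after expanding in the free $R$-module on $\G^*_{\text{met}}\setminus\{e\}$, a finite system of $R$-linear equations among the ``structure constants'' recording which products $\bar g_j$ coincide; by \L o\'s / the support-matching observation above, the same system holds in $R[G_n]$ for $\U$-many $n$, hence $a_n=\sum_{i=1}^M p_i(u_{i,n}-w_{i,n})$ for $\U$-many $n$. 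Therefore $\|a_n\|_S\le\sum_{i=1}^M\|u_{i,n}^{-1}w_{i,n}\|_n$ for $\U$-many $n$, and taking $\lim_{n\to\U}$ and using the definition~\eqref{eq:norm} of the norm on $\G^*_{\text{met}}$, which gives $\lim_{n\to\U}\|u_{i,n}^{-1}w_{i,n}\|_n=\|u_i^{-1}w_i\|$, yields $\lim_{n\to\U}\|a_n\|_S\le\|a\|_S$.

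For the reverse inequality I would argue as follows. For $\U$-many $n$ choose an optimal decomposition $a_n=\sum_{i=1}^{M_n}p_{i,n}(u_{i,n}-w_{i,n})$ with $u_{i,n},w_{i,n}\in\supp(a_n)=\{g_{1,n},\ldots,g_{N,n}\}$ realizing $\|a_n\|_S$. Since the number of combinatorially distinct decompositions of an element with support of size $N$ is bounded by a function of $N$ and $|R|$ only (here $R$ is finite, and the number of terms $M_n$ and the coefficients and the index pattern $(\sigma,\tau)$ all range over finite sets), there is a single ``decomposition template'' --- a number $M$, coefficients $p_1,\ldots,p_M\in R\setminus\{0\}$, and index functions $\sigma,\tau\colon\{1,\ldots,M\}\to\{1,\ldots,N\}$ --- that is used for $\U$-many $n$. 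For those $n$ we have $a_n=\sum_{i=1}^M p_i(g_{\sigma(i),n}-g_{\tau(i),n})$ and $\|a_n\|_S=\sum_{i=1}^M\|g_{\sigma(i),n}^{-1}g_{\tau(i),n}\|_n$. Passing to the limit, the identity lifts to $a=\sum_{i=1}^M p_i(\bar g_{\sigma(i)}-\bar g_{\tau(i)})$ in $\Delta^R(\G^*_{\text{met}})$ (again via the support-matching observation, now in the other direction: the defining $R$-linear relations hold for $\U$-many $n$, hence in the ultraproduct), exhibiting a valid decomposition of $a$, so $\|a\|_S\le\sum_{i=1}^M\|\bar g_{\sigma(i)}^{-1}\bar g_{\tau(i)}\|=\lim_{n\to\U}\sum_{i=1}^M\|g_{\sigma(i),n}^{-1}g_{\tau(i),n}\|_n=\lim_{n\to\U}\|a_n\|_S$.

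\textbf{Main obstacle.} The only subtle point is the ``pigeonhole on decompositions'': one must make precise that, because $R$ is finite and all supports have size at most $N$, there are only finitely many possible optimal-decomposition templates, so that one template works for a $\U$-large set of indices, and that a template-level identity among the $a_n$ transfers to an identity for $a$ in $\Delta^R(\G^*_{\text{met}})$. This is where finiteness of $R$ is genuinely used (it bounds the coefficients $p_i$ and the length of the decomposition). Everything else --- matching supports, transferring linear identities through the ultraproduct, and the continuity of $\|\cdot\|$ from~\eqref{eq:norm} --- is routine \L o\'s-theorem bookkeeping.
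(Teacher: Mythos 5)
Your proof is correct and follows essentially the same route as the paper's: for one inequality you push a minimizing decomposition of $a$ down to decompositions of $a_n$ for $\U$-many $n$, and for the other you use finiteness of $R$ (hence finitely many decomposition templates) to fix one template for $\U$-many $n$ and lift it to a decomposition of $a$, concluding in both cases via $\left\|(g_n)/\E\right\|=\lim_{n\to\U}\|g_n\|_n$. Your bookkeeping on support-matching and the pigeonhole over templates is merely a more explicit version of what the paper leaves implicit.
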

\begin{proof}
$\ge$ Suppose that $\|a\|_S = \sum_{i=1}^N \left\|u_i^{-1}w_i\right\|$, where $a=\sum_{i=1}^N p_i(u_i-w_i)$ is a minimising decomposition of $a$ and $u_i=(u_{i,n})/\E$, $w_i=(w_{i,n})/\E$. For every $n$, let $K_n:=\sum_{i=1}^N \left\|u_{i,n}^{-1}w_{i,n}\right\|$. We have clearly that
\begin{itemize}
    \item $\|a\|_S=\lim_{n\to\U} K_n$,
    \item $\|a_n\|_S\leq K_n$.
\end{itemize}
Consequently, $\|a\|_S\ge \lim_{n\to\U}\|a_n\|_S$.

$\le$ For $\U$-many $n\in\N$ (as $R$ is finite), we have that $\|a_n\|_S=\sum_{i=1}^N \left\|u_{i,n}^{-1}w_{i,n}\right\|$ and $a_n=\sum_{i\leq m} p_i (u_{i,n}-w_{i,n})$, for some non-zero $\{p_1,\ldots,p_N\}$ from $R$. Set $K:=\sum_{i=1}^N \left\|u_{i}^{-1}w_{i}\right\|$, where $u_i=(u_{i,n})/\E$ and $w_i=(w_{i,n})/\E$. Then we clearly have
\begin{itemize}
    \item $K=\lim_{n\to\U} \|a_n\|_S$,
    \item $\|a\|_S\leq K$.
\end{itemize}
It follows that $\|a\|_S\le \lim_{n\to\U}\|a_n\|_S$.
\end{proof}

\begin{proof}[Proof of Theorem \ref{thm:kapp}]
$(2)\Rightarrow (1)$ Take $a,b\in K[\G^*_{\text{met}}]$ with $ab=1$ and $l(a),l(b)\leq N_0$. Then $ba-1\in \Delta^K(\G^*_{\text{met}})$, as $K$ is commutative and write
\begin{align*}
a =p_1\cdot \left(g_{1,n}\right)/\E + \ldots + p_{N_0}\cdot \left(g_{N_0,n}\right)/\E, \\
b =q_1\cdot \left(h_{1,n}\right)/\E + \ldots + q_{N_0}\cdot \left(h_{N_0,n}\right)/\E,
\end{align*}
for some $g_{i,n},h_{i,n}\in G_n$, $p_i,q_i\in K$. It is enough to prove that $\|ba-1\|_S=0$. Define elements $a_n,b_n\in K[G_n]$ as
\begin{align*}
a_n =p_1\cdot g_{1,n} + \ldots + p_{N_0}\cdot g_{N_0,n}, \\
b_n =q_1\cdot h_{1,n} + \ldots + q_{N_0}\cdot h_{N_0,n}.
\end{align*} 
Lemma \ref{lem:lim} implies that
\[ 0=\|ab-1\|_S = \lim_{n\to\U} \|a_n b_n-1\|_S \text{ and }\|ba-1\|_S = \lim_{n\to\U} \|b_na_n-1\|_S.
\]
The assumption (2) implies that $\|ba-1\|_S = \lim_{n\to\U} \|b_na_n-1\|_S=0$, so $ba=1$.

$(1)\Rightarrow (2)$ Suppose (2) fails. Then there are $\varepsilon_0>0$, $N_0\in\N$ and $\U$-many $n\in\N$, such that for all $\delta>0$ there are $a_{n,\delta}, b_{n,\delta}\in K[G_n]$, $l(a_{n,\delta}), l(b_{n,\delta}) \leq N_0$ satisfying 
\begin{equation}\label{assump}
a_{n,\delta}b_{n,\delta}-1\in \Delta^K(G_n) \text{ and }\|a_{n,\delta}b_{n,\delta}-1\|_S<\delta\text{ and }\|b_{n,\delta}a_{n,\delta}-1\|_S\geq \varepsilon_0.
\end{equation}
Observe that then $b_{n,\delta}a_{n,\delta}-1\in \Delta^K(G)$ (since $K$ is commutative).
Since $K$ is a finite field and $\U$ is a nonprincipal ultrafilter, we may assume that there are finite sequences $(p_1,p_2,\ldots,p_{N_0}),\ (q_1,q_2,\ldots,q_{N_0})\in K^{N_0}$
such that for $\U$-many $n\in\N$ and all $\delta>0$
\begin{align*}
a_{n,\delta}&= p_1\cdot g_{1,n,\delta} + \ldots + p_{N_0}\cdot g_{N_0,n,\delta}, \\
b_{n,\delta}&= q_1\cdot h_{1,n,\delta} + \ldots + q_{N_0}\cdot h_{N_0,n,\delta},
\end{align*}
for some $g_{i,n,\delta},h_{i,n,\delta}\in G_n$. Define $a_\infty, b_\infty\in K\left[\G^*_{\text{met}}\right]$ as
\begin{align*}
a_{\infty}=p_1\cdot \bar{g}_1 + \ldots + p_{N_0}\cdot \bar{g}_{N_0}, \\
b_{\infty}=q_1\cdot \bar{h}_1 + \ldots + q_{N_0}\cdot \bar{h}_{N_0},
\end{align*}
where
\begin{align*}
    \bar{g}_i = \left(g_{i,n,\frac{1}{n}}\right)/\E,\ \ \bar{h}_i = \left(h_{i,n,\frac{1}{n}}\right)/\E\text{ for every }1\le i \le N_0.
\end{align*}
Then $a_{\infty}b_{\infty}-1\in \Delta^K(\G^*_{\text{met}})$ and $\|a_{\infty}b_{\infty}-1\|_S = 0$, by (\ref{assump}) and Lemma \ref{lem:lim}. Hence $a_{\infty}b_{\infty}=1$. Therefore $b_{\infty}a_{\infty}-1=0$ by the assumption (1). 
However (\ref{assump}) and Lemma \ref{lem:lim} imply that $\|b_\infty a_\infty -1\|_S\geq \varepsilon_0$, contradiction.
\end{proof}

We conjecture that condition (2) from Theorem \ref{thm:kapp} is true for any bounded class of metric groups. We call this \emph{Metric Kaplansky Conjecture}, which is a quantitative version of classical Kaplansky Conjecture. 

\begin{conjecture}\label{con:kapmet}
For any finite field $K$ every $\varepsilon>0$ and every natural $N\in\N$ there is $\delta(\varepsilon,N)>0$ such that for an arbitrary metric group $(G,\|\cdot\|)$, where $\|\cdot\|\le 1$, the following holds: for every $a,b \in K[G]$, such that $l(a), l(b)\leq N$ and $ab-1\in \Delta(G)$ 
    \[\text{if } \|ab-1\|_S<\delta(\varepsilon,N),\text{ then }\|ba-1\|_S<\varepsilon.\]
\end{conjecture}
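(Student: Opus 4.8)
The plan is to recast the uniform (``all metric groups at once'') statement of Conjecture~\ref{con:kapmet} as an assertion about metric ultraproducts, letting Lemma~\ref{lem:lim} and Theorem~\ref{thm:kapp} carry the bookkeeping. Concretely, I would argue by contradiction. If the conjecture fails, then there are $\varepsilon_0>0$ and $N_0\in\N$ such that for every $m\in\N$ (taking $\delta=1/m$) one can pick a metric group $(G_m,\|\cdot\|_m)$ with $\|\cdot\|_m\le 1$ together with $a_m,b_m\in K[G_m]$ of length $\le N_0$ with $a_mb_m-1\in\Delta^K(G_m)$, $\|a_mb_m-1\|_S<1/m$, yet $\|b_ma_m-1\|_S\ge\varepsilon_0$. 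Fix a nonprincipal ultrafilter $\U$ on $\N$ and pass to the metric ultraproduct $\G^*_{\text{met}}$ of $\G=(G_m,\|\cdot\|_m)_{m\in\N}$. Since $K$ is finite there are only finitely many possible coefficient tuples, so after restricting to a $\U$-large set of indices the $a_m$ (resp.\ $b_m$) share one coefficient tuple, and I can lift them to $a_\infty,b_\infty\in K[\G^*_{\text{met}}]$ of length $\le N_0$ exactly as in the proof of Theorem~\ref{thm:kapp}. Lemma~\ref{lem:lim} then gives $\|a_\infty b_\infty-1\|_S=\lim_{m\to\U}\|a_mb_m-1\|_S=0$, hence $a_\infty b_\infty=1$, while $\|b_\infty a_\infty-1\|_S=\lim_{m\to\U}\|b_ma_m-1\|_S\ge\varepsilon_0>0$, hence $b_\infty a_\infty\neq1$. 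Thus Conjecture~\ref{con:kapmet} is \emph{equivalent} to the assertion that $K[\G^*_{\text{met}}]$ is directly finite for every bounded family $\G$ of metric groups, i.e.\ to condition~(1) of Theorem~\ref{thm:kapp} quantified over all such families.

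Granting this reduction, the second step is to supply families for which $K[\G^*_{\text{met}}]$ is directly finite. The natural sufficient condition is that $\G^*_{\text{met}}$ be \emph{(dual) surjunctive}, since by Theorem~\ref{thm:kap} every surjunctive or dual surjunctive group satisfies Kaplansky's conjecture over the finite field $K$. For instance, if each $G_n$ is a finite symmetric group with a normalised Hamming-type norm, then $\G^*_{\text{met}}$ is a universal sofic group and hence surjunctive, so Conjecture~\ref{con:kapmet} holds along such families; more generally it holds along any family of sofic (or residually amenable, or residually finite) groups equipped with an arbitrary bounded bi-invariant norm, because soficity is preserved under metric ultraproducts and under passing to subgroups, and $G\hookrightarrow\G^*_{\text{met}}$ whenever $G$ is one of the approximating groups.

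The real obstacle is that the conjecture quantifies over \emph{arbitrary} bounded metric groups, and metric ultraproducts of such are extremely wild. Indeed, equip any group $G$ with the discrete bi-invariant norm ($\|g\|=1$ for $g\neq e$, which is $\le 1$ and conjugation-invariant); then for the constant family $\G=(G,G,\dots)$ one has $\E=\{(g_n):g_n=e\text{ for }\U\text{-many }n\}$, so $\G^*_{\text{met}}$ is just the algebraic ultrapower $G^{\U}$, into which $G$ embeds diagonally. Direct finiteness of $K[G^{\U}]$ therefore forces direct finiteness of $K[G]$, and combining this with the standard reduction from an arbitrary field of positive characteristic to a finite quotient field (the second half of the proof of Theorem~\ref{thm:kap}), Conjecture~\ref{con:kapmet} implies the full Kaplansky Direct Finiteness Conjecture in positive characteristic. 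Hence I do not expect an unconditional proof without a breakthrough on that problem; the realistic target is the conditional statement ``$\G^*_{\text{met}}$ surjunctive or dual surjunctive $\Rightarrow$ Conjecture~\ref{con:kapmet} along $\G$'', which the reduction above already yields, together with a catalogue of families — sofic approximations, polycyclic-by-finite or amenable approximations in the spirit of Sections~3 and~4 — for which surjunctivity of the ultraproduct can actually be verified.
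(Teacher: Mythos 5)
This statement is a \emph{conjecture} in the paper: the authors give no proof, and their only supporting evidence is the remark that it holds for the family $\G_{\text{perm}}=(S_n,\tfrac1n\|\cdot\|_{\text{Hamming}})$ because the metric ultraproduct is a universal sofic group (with no explicit $\delta(\varepsilon,N)$). Your proposal correctly refrains from claiming a proof, and the analysis you give is sound. Two comments on its content. First, the reduction you carry out in the opening paragraph --- failure of the uniform statement yields a bounded family $(G_m,\|\cdot\|_m)$ along which the $\|\cdot\|_S$-quantities misbehave, and Lemma~\ref{lem:lim} transfers this to $K[\G^*_{\text{met}}]$ --- is essentially a restatement of the $(1)\Leftrightarrow(2)$ argument already proved in Theorem~\ref{thm:kapp}, so it is correct but adds nothing beyond the paper (note the mild bookkeeping point that the conjecture quantifies over all metric groups at once while Theorem~\ref{thm:kapp} fixes a family and an ultrafilter; your compactness argument bridges this correctly). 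Your list of families for which the conjecture can be verified (sofic approximations, via Theorem~\ref{thm:kap} applied to the surjunctive or dual surjunctive ultraproduct) matches the paper's own remark about $\G_{\text{perm}}$.

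Second, your observation about the discrete bi-invariant norm is correct and is the most valuable part of the proposal: with $\|g\|=1$ for $g\neq e$ one has $\E$ trivial, $\G^*_{\text{met}}\cong G^{\U}$, and in fact one does not even need the ultrapower --- since $\|f\|_S\geq 1$ for every nonzero $f\in\Delta^K(G)$ under the discrete norm, applying the conjecture with $\varepsilon<1$ directly to $(G,\|\cdot\|)$ and $ab=1$ forces $ba=1$. Combined with the finite-field reduction in the second half of the proof of Theorem~\ref{thm:kap}, this shows Conjecture~\ref{con:kapmet} implies Kaplansky's direct finiteness conjecture in positive characteristic for all groups, so an unconditional proof is indeed out of reach by these methods, exactly as you say. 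In short: no error, but also no proof --- which is the correct state of affairs, since the paper itself leaves the statement open; the realistic deliverable is the conditional statement and the catalogue of approximating families you describe.
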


It is known that Conjecture \ref{con:kapmet} holds for $\G_{\text{perm}} = (S_n,\frac{1}{n}\|\cdot\|_{\text{Hamming}})$, where $\|\cdot\|_{\text{Hamming}}$ is the Hamming norm on $S_n$, defined for $\tau\in S_n$ as $\|\tau\|_\text{Hamming} = |\supp(\sigma)|$ ($\supp(\sigma)=\{i : \sigma(i)\neq i\}$), as $\G^*_{\text{perm, met}}$ is a universal sofic group. Unfortunately, we do not have any concrete estimation for $\delta(\varepsilon,N)$. We actually conjecture that $\delta(\varepsilon,N)\le 2\varepsilon$ for $\G_{\text{perm}}$. 

\section{Expansive dynamical systems}
In the last section, we consider dual surjunctivity for more general dynamical systems than subshifts. Here we follow and apply mainly the seminal results of Chung and Li \cite{NPLi}, and Li \cite{Li2019} on expansive algebraic actions.

Let $X$ be a compact metrizable space with some compatible metric $d$ which we may assume, without loss of generality, to be bounded by $1$. An action $\alpha\colon G\curvearrowright X$ of a group $G$ on $X$ by homeomorphisms is called \emph{expansive} if there exists $\delta$ such that for every $x\neq y\in X$ there is $g\in G$ such that $d(g\cdot x, g\cdot y)>\delta$. The real $\delta$ is then called the \emph{expansiveness constant} of $\alpha$.

Two elements $x,y\in X$ are called \emph{homoclinic} if $\lim_{g\to\infty} d(g\cdot x, g\cdot y)\to 0$. Clearly, homoclinicity is an equivalent relation which we shall denote by $\sim$. It coincides with the relation $\sim$ for the Bernoulli topological shifts. Note that since all compatible metrics on $X$ are uniformly equivalent, being expansive and homoclinic does not depend on the choice of the metric.

\begin{lemma}
Let $\alpha\colon G\curvearrowright X$ have expansiveness constant $\delta>0$. We assume that $G$ is countable. For two sequences $(x_n)_n, (y_n)_n\subseteq X$ we have that they both converge to a point $z\in X$ if and only if there is an increasing sequence of finite subsets $(F_n)_n$ of $G$ such that $\bigcup_{n\in\Nat} F_n=G$ such that for every $n\in\Nat$ and $f\in F_n$ we have $d(f\cdot x_n, f\cdot y_n)\leq \delta$.
\end{lemma}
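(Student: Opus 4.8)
The plan is to prove the two implications separately: the forward direction (common convergence $\Rightarrow$ existence of $(F_n)_n$) by a uniform-continuity argument, and the converse by compactness together with expansiveness. For the forward direction, suppose $x_n\to z$ and $y_n\to z$. Since $G$ is countable I would fix an exhaustion $G=\bigcup_{k\in\Nat}E_k$ by finite sets $E_1\subseteq E_2\subseteq\cdots$. Because $X$ is compact, for each $k$ the finitely many homeomorphisms $x\mapsto g\cdot x$ with $g\in E_k$ are uniformly continuous, so there is $\varepsilon_k>0$ such that $d(x,y)<\varepsilon_k$ implies $d(g\cdot x,g\cdot y)\le\delta$ for all $g\in E_k$. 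Since $x_n,y_n\to z$, choose $N_1<N_2<\cdots$ so that $d(x_n,z),d(y_n,z)<\varepsilon_k/2$ whenever $n\ge N_k$; then $d(x_n,y_n)<\varepsilon_k$ and hence $d(g\cdot x_n,g\cdot y_n)\le\delta$ for every $g\in E_k$ and $n\ge N_k$. Setting $F_n:=E_k$ for $N_k\le n<N_{k+1}$ (and $F_n:=\emptyset$ for $n<N_1$) produces an increasing sequence of finite subsets of $G$ with $\bigcup_n F_n=\bigcup_k E_k=G$ and $d(f\cdot x_n,f\cdot y_n)\le\delta$ for all $f\in F_n$, as required. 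This direction is routine.

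For the converse, assume $(F_n)_n$ is increasing, $\bigcup_n F_n=G$, and $d(f\cdot x_n,f\cdot y_n)\le\delta$ for all $n$ and all $f\in F_n$. Using compactness of $X$ I would pass to a subsequence along which both sequences converge, say $x_n\to z$ and $y_n\to z'$, and then show $z=z'$: if not, expansiveness gives $g\in G$ with $d(g\cdot z,g\cdot z')>\delta$; since $(F_n)_n$ is increasing with union $G$ we have $g\in F_n$ for all large $n$, while $g\cdot x_n\to g\cdot z$ and $g\cdot y_n\to g\cdot z'$ force $d(g\cdot x_n,g\cdot y_n)>\delta$ for large $n$, contradicting the hypothesis applied to $f=g$. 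Hence along this subsequence both sequences converge to a common point $z$.

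The step I expect to be the main obstacle is upgrading the subsequential statement to genuine convergence of the full sequences $(x_n)_n$ and $(y_n)_n$ to a single common $z$: the argument above pins down the limit only after a convergent subsequence has been extracted, so one must rule out two distinct extracted limits — again by producing a separating $g\in G$ via expansiveness, using that $(F_n)_n$ exhausts $G$ — or else invoke that one of the two sequences is already known to converge, in which case the displayed contradiction forces the other to converge to the same limit. I would present the converse in that form, as it is precisely what the applications to expansive algebraic actions require, and it is the point where the expansiveness constant $\delta$ and the exhausting property of $(F_n)_n$ are genuinely used.
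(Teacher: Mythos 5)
Your two directions follow essentially the same route as the paper: the forward implication is the routine continuity argument (the paper argues pointwise in $g$ and takes $F_n=\{g\colon n\ge n_g\}$, which incidentally need not be finite, so your exhaustion-plus-uniform-continuity version is in fact the more careful one), and the converse is compactness plus expansiveness applied to a separating group element.

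The obstacle you flag at the end is real, and of your two proposed remedies only the second one works. From the hypothesis you can only compare $f\cdot x_n$ with $f\cdot y_n$ at the \emph{same} index $n$, so expansiveness identifies the two limits of a paired convergent subsequence $(x_{n_k},y_{n_k})$, but it gives no handle on two distinct limits $z\neq z''$ arising from two \emph{different} subsequences: for that you would need to control $d(g\cdot x_{n_k}, g\cdot x_{m_j})$ across indices, and nothing in the hypothesis does so. In fact the right-to-left implication is false as stated: take $x_n=y_n$ to be any non-convergent sequence; then $d(f\cdot x_n,f\cdot y_n)=0\le\delta$ for every $f$, so any exhaustion of $G$ by finite sets witnesses the hypothesis, yet there is no common limit. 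What the hypothesis genuinely yields is exactly your fallback statement: every paired convergent subsequence has equal limits, equivalently $d(x_n,y_n)\to 0$, equivalently if one of the two sequences converges then the other converges to the same point. That corrected form is what the applications to expansive systems require, and it is also precisely the point the paper's own proof glosses over: its reduction ``we may assume that $(x_n)$ converges to some $x$, $(y_n)$ converges to some $y$, and $x\neq y$'' is not a legitimate consequence of ``they do not converge to a common point''. So present the converse in your corrected form and do not attempt to recover the full-sequence statement by a further expansiveness argument; the counterexample shows it cannot be recovered without an additional hypothesis.
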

\begin{proof}
Suppose that $(x_n)_n$ and $(y_n)_n$ both converge to $z\in X$. For every $g\in G$, $\lim_{n\to\infty} d(g\cdot x_n, g\cdot y_n)=0$, so there is $n_g\in\Nat$ such that for all $n\geq n_g$, $d(g\cdot x_n, g\cdot y_n)\leq \delta$. For each $n\in \Nat$ we set $F_n:=\{g\in G\colon n\geq n_g\}$. This gives us the desired sequence of finite subsets of $G$.

Conversely, assume that we have such a sequence of finite sets $(F_n)_n$ for the two sequence $(x_n)_n$ and $(y_n)_n$. Assume that they do not converge to a common point. We may assume that $(x_n)_n$ converges to some $x\in X$, $(y_n)_n$ converges to some $y\in X$, and $x\neq y$. By expansiveness, there exists $g\in G$ such that $d(g\cdot x, g\cdot y)>\delta$. It follows that for all sufficiently large $n\in\Nat$ we have $d(g\cdot x_n,g\cdot y_n)>\delta$, which contradicts that there is $n\in\Nat$ such that $g\in F_n$.
\end{proof}
Having the relation `$\sim$' at our disposal, we can define the notion of strong post-surjectivity in the same way as for subshifts.
\begin{definition}
Let $\alpha\colon G\curvearrowright X$ be given. Let $T\colon X\rightarrow X$ is a continuous $G$-equivariant map. We say that $T$ is \emph{strongly post-surjective} if there is a finite subset $F\subseteq G$ such that for every $x,y\in X$ such that $T(x)\sim y$, i.e. in particular the set $D:=\{g\in G\colon d(g\cdot T(x), g\cdot y)>\delta\}$ is finite, there exists $z\sim x$ such that $T(z)=y$ and $\{g\in G\colon d(g\cdot x, g\cdot z)>\delta\}\subseteq FD$.

The finite set $F$ is called the \emph{post-surjectivity set} for $T$.
\end{definition}
Obviously, this definition can only be reasonable provided the reversible maps satisfy it. We show this is indeed the case.

First, we need an analogue of Proposition~\ref{prop:unifinject}.
\begin{lemma}\label{lem:unifinjectivityexpansive}
Let $\alpha\colon G\curvearrowright X$ have expansiveness constant $\delta>0$. A continuous $G$-equivariant map $T$ is injective if and only if there exists a finite set $F\subseteq G$ such that for every $x\neq y\in X$ with $d(x,y)>\delta$ there is $f\in F$ such that $d(f\cdot T(x), f\cdot T(y))>\delta$.
\end{lemma}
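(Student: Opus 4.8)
The plan is to mimic the proof of Proposition~\ref{prop:unifinject}, replacing the combinatorial "coordinate disagreement" statement for subshifts with the metric "disagreement at distance $>\delta$" statement coming from expansiveness. The statement is an equivalence, so I will prove the two directions separately.

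For the easy direction, suppose the finite set $F$ exists and $T$ is not injective: pick $x\neq y\in X$ with $T(x)=T(y)$. By expansiveness there is some $h\in G$ with $d(h\cdot x, h\cdot y)>\delta$; applying the hypothesis to the pair $h\cdot x, h\cdot y$ (which are distinct and more than $\delta$ apart) gives $f\in F$ with $d(f\cdot T(h\cdot x), f\cdot T(h\cdot y))>\delta$, i.e.\ $d((fh)\cdot T(x), (fh)\cdot T(y))>\delta$, contradicting $T(x)=T(y)$. Here I use $G$-equivariance of $T$ and of the action.

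For the hard direction, assume $T$ is injective but no such finite $F$ works. As in Proposition~\ref{prop:unifinject} I take $G$ countable (the uncountable case is analogous) and fix an increasing exhaustion $G=\bigcup_m F_m$ by finite sets. For each $m$, failure of $F_m$ gives $x_m\neq y_m\in X$ with $d(x_m,y_m)>\delta$ but $d(f\cdot T(x_m), f\cdot T(y_m))\le\delta$ for all $f\in F_m$. By compactness of $X$, pass to a subnet (or subsequence) so that $x_m\to x$ and $y_m\to y$. Now the previous Lemma applies: the sets $F_m$ witness, via continuity of $T$ and of the action, that $T(x_m)$ and $T(y_m)$ converge to a common point, i.e.\ $T(x)=T(y)$. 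Meanwhile $d(x_m,y_m)>\delta$ for all $m$ forces $d(x,y)\ge\delta>0$, so $x\neq y$, contradicting injectivity of $T$.

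The main obstacle is making the compactness/limit step fully rigorous: one must be careful that "$d(f\cdot T(x_m),f\cdot T(y_m))\le\delta$ for all $f\in F_m$" is exactly the hypothesis feeding the preceding Lemma (with the roles of the two sequences played by $(T(x_m))_m$ and $(T(y_m))_m$), so that one concludes $T(x)=T(y)$ rather than merely $d(f\cdot T(x),f\cdot T(y))\le\delta$ for all $f$. Since the action is expansive, the latter weaker conclusion would already suffice to force $T(x)=T(y)$, but invoking the Lemma directly is cleaner. I also want to note explicitly that distinctness of $x$ and $y$ does not require the strict inequality to pass to the limit — only $d(x,y)\ge\delta>0$ — which is why the $\le\delta$ appearing in the preceding Lemma and the $>\delta$ in the expansiveness definition fit together without friction.
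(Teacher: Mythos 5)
Your proof is correct and follows essentially the same route as the paper: the easy direction uses equivariance and expansiveness exactly as in the paper, and the hard direction is the same compactness/limit argument modelled on Proposition~\ref{prop:unifinject}. The only cosmetic difference is that the paper works with nets indexed by all finite subsets of $G$ and concludes $T(x)=T(y)$ directly from expansiveness and continuity, whereas you reduce to a countable exhaustion and invoke the preceding lemma on homoclinic-type convergence; both are sound.
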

\begin{proof}
Suppose that a continuous $G$-equivariant map $T$ satisfies such a condition. We show it is injective. Choose $x\neq y\in X$. By expansiveness, there is $g\in G$ such that $d(g\cdot x,g\cdot y)>\delta$, so there must be $f\in F$ such that $d(fg\cdot T(x),fg\cdot T(y))>\delta$; in particular, $T(x)\neq T(y)$.

We now show the converse. Suppose it does not satisfy the condition. Then for every finite set $F\subseteq G$ there are $x_F,y_F\in X$ such that $d(x_F,y_F)>\delta$, yet $d(f\cdot T(x_F), f\cdot T(y_F))\leq \delta$ for all $f\in F$. Since $X$ is compact we may assume that the nets $(x_F)_F$ and $(y_F)_F$ converge to elements $x$ and $y$, respectively. We have $d(x,y)\geq \delta$, so $x\neq y$. If $T(x)\neq T(y)$, then by expansiveness there exists $f\in G$ such that $d(f\cdot T(x), f\cdot T(y))>\delta$. Then however $d(f\cdot T(x_F), f\cdot T(y_F))>\delta$ for all sufficiently large sets $F$ containing $f$. This contradictions shows that $T(x)=T(y)$, so $T$ is not injective.
\end{proof}
\begin{definition}
For an injective continuous $G$-equivariant map $T$, the finite set $F$ from Lemma~\ref{lem:unifinjectivityexpansive} is called the \emph{injectivity} set for $T$.
\end{definition}
\begin{proposition}
Let $\alpha\colon G\curvearrowright X$ be as above. Let $T\colon X\rightarrow X$ be a continuous $G$-equivariant map which is moreover reversible. Then $T$ is strongly post-surjective.
\end{proposition}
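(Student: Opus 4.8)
The plan is to take the only possible candidate for the preimage, namely $z:=T^{-1}(y)$, and to read off the post-surjectivity set directly from an injectivity set for $T$. First I would observe that since $X$ is compact metrizable and $T$ is a continuous bijection, the inverse $T^{-1}$ is automatically continuous, and its $G$-equivariance follows by applying $T^{-1}$ to both sides of $T(g\cdot x)=g\cdot T(x)$. In particular $T$ is injective, so by Lemma~\ref{lem:unifinjectivityexpansive} it admits a finite injectivity set $N\subseteq G$, and after replacing $N$ by $N\cup N^{-1}$ we may assume $N$ is symmetric. The claim will be that $N$ is a post-surjectivity set for $T$.

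Now fix $x,y\in X$ with $T(x)\sim y$ and set $z:=T^{-1}(y)$, so $T(z)=y$ by construction. The next step is the auxiliary fact that every continuous $G$-equivariant map $S\colon X\to X$ sends homoclinic pairs to homoclinic pairs: if $d(g\cdot u,g\cdot v)\to 0$ as $g\to\infty$, then by uniform continuity of $S$ on the compact space $X$ together with equivariance we get $d(g\cdot S(u),g\cdot S(v))=d(S(g\cdot u),S(g\cdot v))\to 0$. Applying this with $S=T^{-1}$ to the pair $T(x)\sim y$ yields $x=T^{-1}(T(x))\sim T^{-1}(y)=z$, so $z\sim x$ as required; consequently both the set $D:=\{g\in G\colon d(g\cdot T(x),g\cdot y)>\delta\}$ and the set $\{g\in G\colon d(g\cdot x,g\cdot z)>\delta\}$ are finite.

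It then remains to verify the containment $\{g\in G\colon d(g\cdot x,g\cdot z)>\delta\}\subseteq N\cdot D$. Suppose $d(g\cdot x,g\cdot z)>\delta$; then $g\cdot x$ and $g\cdot z$ are distinct points at distance $>\delta$, so the defining property of the injectivity set $N$ for $T$ gives some $n\in N$ with $d(n\cdot T(g\cdot x),n\cdot T(g\cdot z))>\delta$. By $G$-equivariance this reads $d(ng\cdot T(x),ng\cdot y)>\delta$, i.e.\ $ng\in D$, hence $g\in n^{-1}D\subseteq N^{-1}D=N\cdot D$. This proves the claim and hence the proposition, with $N$ itself serving as a post-surjectivity set (so, as in Lemma~\ref{lem:injectivitypostsurjectivity}, a symmetrized injectivity set doubles as a post-surjectivity set).

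The argument is essentially a transcription of the first bullet of Lemma~\ref{lem:injectivitypostsurjectivity} to the expansive-action setting, and I do not expect a genuine obstacle. The only points that need a little care are the automatic continuity and equivariance of $T^{-1}$, the routine preservation of the homoclinic relation under continuous equivariant maps (which is where compactness, via uniform continuity, is used), and bookkeeping of left versus right translates so that the symmetrized set $N$ lands on the correct side in the final inclusion.
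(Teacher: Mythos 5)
Your proposal is correct and follows essentially the same route as the paper: take the symmetrized finite injectivity set from Lemma~\ref{lem:unifinjectivityexpansive} as the candidate post-surjectivity set, let the candidate preimage be $T^{-1}$ of the given point, use continuity and equivariance of $T^{-1}$ to see it is homoclinic to $x$, and derive the inclusion by applying the injectivity set at each $g$ where $x$ and the preimage differ by more than $\delta$. Your explicit justifications of the continuity/equivariance of $T^{-1}$ and of the preservation of homoclinicity under continuous equivariant maps are points the paper leaves implicit, but they do not constitute a different argument.
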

\begin{proof}
Since $T$ is an injective continuous $G$-equivariant map, let $F$ be its finite injectivity set for provided by Lemma~\ref{lem:unifinjectivityexpansive}. We may suppose that it is symmetric. We show it is a post-surjectivity set for $T$. Choose $x,z\in X$ such that $T(x)\sim z$. Set $y:=T^{-1}(z)$. Since $T^{-1}$ is, by assumption, continuous and $G$-equivariant, we have $x\sim y$, and obviously $T(y)=z$. Set $D:=\{g\in G\colon d(g\cdot T(x),g\cdot z)>\delta)\}$, which is finite. Suppose that there is $h\in G\smallsetminus FD$ such that $d(h\cdot x,h\cdot y)>\delta$. Then, since $F$ is an injectivity set for $T$, there must be $f\in F$ such that $d(fh\cdot T(x),fh\cdot z)>\delta$. Therefore $fh\in D$, so $h\in FD$, a contradiction.
\end{proof}
\begin{theorem}\label{thm:reversibilityexpansive}
Let $\alpha\colon G\curvearrowright X$ be as above. Suppose that there exists a dense class in the homoclinicity relation $\sim$. Let $T\colon X\rightarrow X$ be a pre-injective and strongly post-surjective continuous $G$-equivariant map. Then $T$ is injective.
\end{theorem}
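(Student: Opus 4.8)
The plan is to argue by contradiction, following the scheme of the proof of Theorem~\ref{thm:isom} but adapted to the fact that here the only available tool is density of a single homoclinicity class rather than the patching of configurations afforded by strong irreducibility. For $x,y\in X$ write $\Delta(x,y)=\{g\in G:d(g\cdot x,g\cdot y)>\delta\}$, so that $\Delta(x,y)=\emptyset$ is equivalent to $x=y$ by expansiveness and $d(x,y)>\delta$ is equivalent to $1_G\in\Delta(x,y)$. The first step is to record the analogue of Observation~\ref{obs:postsurjectivity}(2) in this setting: if $F_0\subseteq G$ is a post-surjectivity set for $T$, which we may take finite and symmetric (replace it by $F_0\cup F_0^{-1}$), then $\Delta(c,d)\subseteq F_0\cdot\Delta(T(c),T(d))$ for every homoclinic pair $c\sim d$. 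Indeed, $T$ is uniformly continuous (being continuous on the compact space $X$) and $G$-equivariant, hence maps homoclinic pairs to homoclinic pairs, so $T(c)\sim T(d)$; applying strong post-surjectivity to the pair $(c,T(d))$ produces $z\sim c$ with $T(z)=T(d)$ and $\Delta(c,z)\subseteq F_0\cdot\Delta(T(c),T(d))$, and since $z\sim c\sim d$ with $T(z)=T(d)$, pre-injectivity forces $z=d$.

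Now assume $T$ is not injective. Choose $x_0\neq y_0$ with $T(x_0)=T(y_0)$; by expansiveness there is $g\in G$ with $d(g\cdot x_0,g\cdot y_0)>\delta$, and replacing $x_0,y_0$ by $g\cdot x_0,g\cdot y_0$ (whose images under $T$ are still equal) we obtain $x,y$ with $d(x,y)>\delta$ and $T(x)=T(y)$, hence $d(f\cdot T(x),f\cdot T(y))=0$ for every $f\in F_0$. Fix $w\in X$ with dense $\sim$-class. Because the finitely many maps $z\mapsto f\cdot T(z)$, $f\in F_0$, are uniformly continuous (as composites of uniformly continuous maps), density of $[w]_\sim$ lets us choose $\tilde x,\tilde y\in[w]_\sim$, with $\tilde x$ sufficiently close to $x$ and $\tilde y$ sufficiently close to $y$, such that simultaneously $d(\tilde x,\tilde y)>\delta$ (possible since $d(x,y)>\delta$) and $d(f\cdot T(\tilde x),f\cdot T(\tilde y))\le\delta$ for all $f\in F_0$, that is, $F_0\cap\Delta(T(\tilde x),T(\tilde y))=\emptyset$. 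Since $\tilde x\sim w\sim\tilde y$, the first paragraph yields $\Delta(\tilde x,\tilde y)\subseteq F_0\cdot\Delta(T(\tilde x),T(\tilde y))$; as $1_G\in\Delta(\tilde x,\tilde y)$, writing $1_G=fh$ with $f\in F_0$ and $h\in\Delta(T(\tilde x),T(\tilde y))$ gives $h=f^{-1}\in F_0$ by symmetry, contradicting $F_0\cap\Delta(T(\tilde x),T(\tilde y))=\emptyset$. Hence $T$ is injective.

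I expect the perturbation step to be the only delicate point, and it is exactly why one should first reduce to exact equality $T(x)=T(y)$ on all of $G$: with only density (and no patching) at hand, $\tilde x,\tilde y$ can be taken close to $x,y$ but not equal to them on a prescribed finite window, so if one started merely from $x,y$ agreeing under $T$ on a finite set — as in the proof of Theorem~\ref{thm:isom} via an injectivity set — the inequalities $d(f\cdot T(x),f\cdot T(y))\le\delta$ might be tight and would not survive an arbitrarily small perturbation. Starting from $T(x)=T(y)$ makes these quantities tend to $0$ under perturbation, so a single dense homoclinicity class is exactly enough to run the argument.
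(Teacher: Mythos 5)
Your proof is correct and follows essentially the same route as the paper's: reduce to a pair $x\neq y$ with $T(x)=T(y)$ and $d(x,y)>\delta$, perturb both points into the dense homoclinicity class while keeping $d(\tilde x,\tilde y)>\delta$ and the images $\delta$-close on the finite symmetric post-surjectivity set, and then combine strong post-surjectivity with pre-injectivity to force $1_G\in F_0\cdot\Delta(T(\tilde x),T(\tilde y))$ against $F_0\cap\Delta(T(\tilde x),T(\tilde y))=\emptyset$. The only organizational difference is that you isolate the analogue of Observation~\ref{obs:postsurjectivity}(2) (and the fact that $T$ preserves homoclinicity) as an explicit preliminary step, which the paper's proof carries out inline.
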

\begin{proof}
Let $\delta>0$ be the expansiveness constant. Suppose that such $T$ is not injective, so there are $w_1\neq w_2\in X$ with $T(w_1)=T(w_2)$. By expansiveness, there is $g\in G$ such that $d(g\cdot w_1,g\cdot w_2)>\delta$, so without loss of generality, we can assume that $d(w_1,w_2)>\delta$. Let $F\subseteq G$ be a finite symmetric post-surjectivity set for $T$. By the assumption, there exists $v\in X$ whose equivalence class $\{v'\in X\colon v'\sim v\}$ is dense in $X$.  Therefore by the continuity of $T$ and of the group action, we can find $v_1\sim v$ and $v_2\sim v$, where $d(v_1,w_1)$ and $d(v_2,w_2)$ are small enough so that $d(v_1,v_2)>\delta$ and for every $f\in F$, $d(f\cdot T(v_1),f\cdot T(v_2))<\delta$. Since $T$ is pre-injective, $T(v_1)\neq T(v_2)$. So by expansiveness, there exists $g\in G$ so that $d(g\cdot T(v_1),g\cdot T(v_2))>\delta$. By assumption, $g\notin F$. Now apply the strong post-surjectivity to $x=v_1$ and $z=T(v_2)$. We have $g\in D:=\{h\in G\colon d(h\cdot T(x),h\cdot z)>\delta\}\cap F=\emptyset$. By the strong post-surjectivity, there exists $y\sim x=v_1$ with $T(y)=z=T(v_2)$ such that $D':=\{h\in G\colon d(h\cdot x, h\cdot y)>\delta\}\subseteq FD$. However, pre-injectivity of $T$ implies that $y=v_2$, so since $d(v_1,v_2)>\delta$, $1_G\in D'$. Since for every $f\in F$, $d(f\cdot T(v_1),f\cdot T(v_2))<\delta$, we have $F\cap D=\emptyset$. Since $F$ is symmetric it follows that $1_G\notin FD$. This contradiction finishes the proof.
\end{proof}
We shall need one more simple lemma, where we require that \emph{every} class in the homoclinicity relation is dense.
\begin{lemma}\label{lem:densehomoclinic}
Let $\alpha\colon G\curvearrowright X$ be as above and suppose that every class $[x]_\sim$ in the homoclinicity relation is dense. Then every strongly post-surjective continuous $G$-equivariant map $T\colon X\rightarrow X$ is surjective.
\end{lemma}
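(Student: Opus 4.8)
The plan is to mimic the proof of Lemma~\ref{lem:post-surjective-surjective}, with the density of every homoclinicity class playing the role that strong irreducibility played for subshifts. So let $T\colon X\to X$ be strongly post-surjective with finite post-surjectivity set $F\subseteq G$, let $\delta>0$ be the expansiveness constant, and fix an arbitrary target point $y\in X$; the goal is to produce $z\in X$ with $T(z)=y$. (If $X=\emptyset$ there is nothing to prove, so assume $X\neq\emptyset$ and pick some $x\in X$.)

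First I would exploit the density hypothesis applied to the class $[T(x)]_\sim$. Since $T(x)\in X$, this class is dense in $X$, so there is a sequence $(y_n)_{n\in\Nat}$ with $y_n\to y$ and $y_n\sim T(x)$ for every $n$. For each $n$ the pair $x,y_n$ satisfies $T(x)\sim y_n$ — in particular the set $\{g\in G\colon d(g\cdot T(x),g\cdot y_n)>\delta\}$ is finite — so strong post-surjectivity of $T$ furnishes $z_n\in X$ with $z_n\sim x$ and $T(z_n)=y_n$; the additional containment of $\{g\colon d(g\cdot x,g\cdot z_n)>\delta\}$ inside the $F$-dilate of that finite set is not needed here.

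Next I would invoke compactness of $X$: passing to a subsequence, we may assume $z_n\to z$ for some $z\in X$. By continuity of $T$ we get $T(z_n)\to T(z)$, i.e.\ $y_n\to T(z)$; but also $y_n\to y$, hence $T(z)=y$. Since $y\in X$ was arbitrary, $T$ is surjective.

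The argument has essentially no obstacle; the only point that has to be watched is that the approximating points $y_n$ must be chosen homoclinic to $T(x)$ (rather than to $y$), which is exactly why one wants \emph{every} class, and in particular $[T(x)]_\sim$, to be dense. If $G$ is uncountable one replaces the sequences by nets throughout, which changes nothing since $X$ is compact.
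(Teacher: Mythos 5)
Your proof is correct and follows essentially the same route as the paper: approximate the arbitrary target by points homoclinic to $T(x)$ (using density of that class), lift each of them through $T$ via strong post-surjectivity, and take a cluster point, concluding by continuity of $T$. The side remark about nets is harmless but unnecessary, since $X$ is assumed compact metrizable.
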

\begin{proof}
Pick an arbitrary $x\in X$ and we show that there is $y\in X$ such that $T(y)=x$. Let $d$ be a compatible metric on $X$. Since $\{z\in X\colon z\sim T(x)\}$ is dense in $X$, for every $n\in\Nat$ there is $z_n\sim T(x)$ with $d(z_n,x)<1/n$. By strong post-surjectivity, there is $y_n\in X$, for each $n\in\Nat$, such that $T(y_n)=z_n$. Their cluster point $y$ clearly satisfies $T(y)=x$.
\end{proof}
We now apply the previous results to expansive algebraic actions of amenable groups, thoroughly studied in the context of surjunctivity for example in \cite{NPLi} and \cite{Li2019}.

The following is the most important definition.
\begin{definition}
Let $\alpha\colon G\curvearrowright X$ be an expansive action of a group $G$ on a compact metrizable space $X$ by homeomorphisms. We say that $\alpha\colon G\curvearrowright X$ is \emph{dual surjunctive} if every continuous $G$-equivariant strongly post-surjective map $T\colon X\rightarrow X$ is reversible.
\end{definition}
In the sequel, we work with algebraic actions. That is, actions of countable groups on compact metrizable abelian groups by continuous automorphisms. By the Pontryagin duality, all such actions of a countable group $G$ are in one-to-one correspondence with countable modules over the group ring $\Int G$. We refer to \cite[Chapter 13]{KerrLi} for an introduction to expansive algebraic actions and the notions of entropy from the next result. We recall that a group is \emph{polycyclic-by finite} if it is obtained recursively in finitely many steps by the group extension operation, using at each step a finite or a cyclic group.
\begin{theorem}\label{thm:expansivealgactionDS}
Let $\alpha\colon G\curvearrowright X$ be an expansive algebraic action of a countable amenable group on a compact metrizable abelian group $X$ with completely positive entropy with respect to the normalized Haar measure on $X$. Suppose that at least one of the following conditions is satisfied:
\begin{enumerate}
    \item\label{it:exp1} either $G$ is polycyclic-by-finite,
    \item\label{it:exp2} or the set $\Delta(X)$ of elements of $X$ that are homoclinic to the identity element $e_X$ of $X$ is dense in $X$.
\end{enumerate}
 Then $\alpha$ is dual surjunctive.
\end{theorem}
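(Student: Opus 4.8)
The plan is to reduce to the situation where the homoclinic group $\Delta(X)$ is dense in $X$, and then assemble surjectivity, pre-injectivity, and injectivity of a strongly post-surjective $T$ from the results already in the paper together with the Garden of Eden machinery of Chung and Li.

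First I would dispose of case \eqref{it:exp1}. By the theorem of Chung--Li \cite{NPLi} on homoclinic groups of expansive algebraic actions, for a polycyclic-by-finite $G$ the action $\alpha$ has completely positive entropy with respect to the Haar measure if and only if $\Delta(X)$ is dense in $X$ (equivalently, the IE-group of $\alpha$ is all of $X$). Hence under \eqref{it:exp1} condition \eqref{it:exp2} is automatically satisfied, and from now on I would assume $\Delta(X)$ dense. Since $X$ is a compact metrizable abelian group we may fix a translation-invariant compatible metric $d$, and since $G$ acts by automorphisms we get $d(g\cdot x,g\cdot y)=d\bigl(g\cdot(x-y),e_X\bigr)$ for all $g\in G$; thus $x\sim y$ if and only if $x-y\in\Delta(X)$, so the homoclinic class of every point $x$ is the coset $x+\Delta(X)$, which is dense. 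In particular \emph{every} class $[x]_\sim$ is dense.

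Now let $T\colon X\to X$ be a continuous $G$-equivariant strongly post-surjective map. Since every homoclinic class is dense, Lemma~\ref{lem:densehomoclinic} gives that $T$ is surjective. Next I would invoke the Moore half of the Garden of Eden theorem for expansive algebraic actions of countable amenable groups with completely positive entropy, due to Chung--Li \cite{NPLi} and Li \cite{Li2019}: a surjective continuous $G$-equivariant self-map of $X$ is pre-injective. (Concretely, density of $\Delta(X)$ yields a weak specification property for $\alpha$, and the Moore property is then the ``surjective $\Rightarrow$ pre-injective'' direction of the general Garden of Eden theorem for such systems.) Hence $T$ is pre-injective. Being pre-injective and strongly post-surjective in a system in which some (indeed every) class $[x]_\sim$ is dense, Theorem~\ref{thm:reversibilityexpansive} applies and shows that $T$ is injective. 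Thus $T$ is a continuous bijection of a compact metrizable space, hence a homeomorphism, and its inverse is automatically $G$-equivariant; so $T$ is reversible. As $T$ was an arbitrary strongly post-surjective continuous $G$-equivariant map, $\alpha$ is dual surjunctive.

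The main obstacle I expect is matching hypotheses rather than any genuinely new argument: one must make sure that completely positive entropy with respect to the Haar measure is precisely the hypothesis under which (i) $\Delta(X)$ is dense when $G$ is polycyclic-by-finite and (ii) the Moore property (surjective $\Rightarrow$ pre-injective) holds for arbitrary --- not necessarily homomorphic --- continuous $G$-equivariant self-maps of $X$ over a general countable amenable $G$. If the cleanest available form of the Moore property is stated only for algebraic endomorphisms, or only under an additional structural assumption on the action, then that extra assumption is exactly what forces the qualifier ``under some additional conditions'' in the amenable case; the polycyclic-by-finite case should come out unconditional once density of $\Delta(X)$ is secured via Chung--Li.
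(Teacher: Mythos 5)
Your proposal is correct and follows essentially the same route as the paper: reduce case \eqref{it:exp1} to \eqref{it:exp2} via Chung--Li, observe that every homoclinic class is a translate of $\Delta(X)$ and hence dense, get surjectivity from Lemma~\ref{lem:densehomoclinic}, pre-injectivity from the Garden of Eden result of Li \cite{Li2019}, and conclude via Theorem~\ref{thm:reversibilityexpansive}. The only additions are spelled-out minor steps (translation-invariant metric, injective continuous bijection of a compact space is reversible) that the paper leaves implicit.
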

\begin{proof}
We need \eqref{it:exp2}. If \eqref{it:exp1} is satisfied, i.e. $G$ is polycyclic-by-finite, then by \cite[Theorem 1.2]{NPLi}, the assumption on completely positive entropy implies that the set $\Delta(X)$ is dense in $X$; that is, \eqref{it:exp1} implies \eqref{it:exp2}. Notice also that each equivalence class in $\sim$ is a translate of $\Delta(X)$, so actually each equivalence class is dense. Thus by Lemma~\ref{lem:densehomoclinic}, $T$ is surjective. 

It follows that we can apply \cite[Theorem 1.2]{Li2019} to get that $T$ is pre-injective. Finally, we have all the ingredients to apply Theorem~\ref{thm:reversibilityexpansive} to obtain that $T$ is reversible.
\end{proof}

\bibliography{ca}

\bibliographystyle{alpha}

\end{document}